\newcommand{\Bcal}{\mathcal{B}}
\newcommand{\Ccal}{\mathcal{C}}
\newcommand{\Kcal}{\mathcal{K}}
\newcommand{\Pcal}{\mathcal{P}}
\newcommand{\Qcal}{\mathcal{Q}}
\newcommand{\Rcal}{\mathcal{R}}
\newcommand{\Ucal}{\mathcal{U}}
\newcommand{\Vcal}{\mathcal{V}}
\newcommand{\Xcal}{\mathcal{X}}
\newcommand{\Ycal}{\mathcal{Y}}
\newcommand{\Zcal}{\mathcal{Z}}
\newcommand{\Z}{\mathbb{Z}}
\newcommand{\R}{\mathbb{R}}
\newcommand{\N}{\mathbb{N}}
\newcommand{\T}{\mathbb{T}}
\newcommand{\E}{\mathbb{E}}
\newcommand{\Eb}{\mathbf{E}}
\newcommand{\Kb}{\mathbf{K}}
\newcommand{\Wb}{\mathbf{W}}
\newcommand{\Xb}{\mathbf{X}}
\newcommand{\Yb}{\mathbf{Y}}
\newcommand{\Zb}{\mathbf{Z}}
\newcommand{\al}{\alpha}
\newcommand{\Ga}{\Gamma}
\newcommand{\ga}{\gamma}
\newcommand{\del}{\delta}
\newcommand{\ep}{\epsilon}
\newcommand{\sig}{\sigma}
\newcommand{\la}{\lambda}
\newcommand{\om}{\omega}
\newcommand{\Om}{\Omega}
\newcommand{\ol}{\overline}
\newcommand{\br}{\vspace{3 mm}}
\newcommand{\tri}{\bigtriangleup}
\newcommand{\rest}{\upharpoonright}
\newcommand{\Aut}{{\rm{Aut\,}}}
\newcommand{\id}{{\rm{id}}}
\newcommand{\diam}{{\rm{diam\,}}}
\newcommand{\Sym}{{\rm{Sym\,}}}
\newcommand{\Prob}{{\rm{Prob}}}
\theoremstyle{plain}
\newtheorem{thm}{Theorem}[section]
\newtheorem{lem}[thm]{Lemma}
\newtheorem{prop}[thm]{Proposition}
\newtheorem{claim}{Claim}
\theoremstyle{definition}
\newtheorem{defn}[thm]{Definition}
\newtheorem{rmk}[thm]{Remark}
\newtheorem{rmks}[thm]{Remarks}
\newtheorem{setup}[thm]{Set up}
\newtheorem{exa}[thm]{Example}
\begin{document}


\title[A generic distal tower of arbitrary countable height]
{A generic distal tower of arbitrary countable height over an arbitrary infinite ergodic system}

\author{Eli Glasner and Benjamin Weiss}

\address{Department of Mathematics\\
     Tel Aviv University\\
         Tel Aviv\\
         Israel}
\email{glasner@math.tau.ac.il}
\address {Institute of Mathematics\\
 Hebrew University of Jerusalem\\
Jerusalem\\
 Israel}
\email{weiss@math.huji.ac.il}

%


\setcounter{secnumdepth}{2}



\setcounter{section}{0}


\begin{abstract}
We show the existence, over an arbitrary infinite ergodic $\Z$-dynamical system,
of a generic ergodic relatively distal extension of arbitrary countable rank
and arbitrary infinite compact extending groups (or more generally, infinite quotients of compact groups)
in its canonical distal tower.
\end{abstract}
 
\subjclass[2010]{Primary 37A05, 37A20, 37A25}

\keywords{distal systems, structure theory, cocycles}

\begin{date}
{September 14, 2020}
\end{date}

\maketitle


\tableofcontents
\setcounter{secnumdepth}{2}


\setcounter{section}{0}


\section*{Introduction}

It would be hard to exaggerate the importance and impact of Harry Furstenberg's 1963 paper
``The structure of distal flows", \cite{F-63}.
In this revolutionary work Furstenberg started what we call today the ``structure theory"
of dynamical systems. 
We recall that a topologically distal dynamical system $(X,T)$,
with $X$ a metric compact space and $T$ a self homeomorphism,
is called {\em distal} if the only proximal pairs in $X$ are the diagonal pairs; \ i.e.
if $\lim T^{n_i} x=\lim T^{n_i} x'$ for a pair $x,x'\in X$ and
a sequence $n_i\in \Z$, then $x=x'$. Furstenberg's
distal structure theorem asserts that every minimal distal
system $(X,T)$ has, uniquely, a structure of an inverse limit of
a family of factors $\{(X_\al,T):\al<\eta\}$ directed by a
countable ordinal $\eta$
such that for every $\al<\eta$ the extension $X_{\al+1}\to X_\al$
is a maximal topologically isometric extension.

Whereas in \cite{F-63} the subject of study is that of ``minimal flows",
so in the domain of topological dynamics, the works \cite{Z-76a}, \cite{Z-76b} and \cite{F-77} and \cite{F-81}
introduce and prove an analogous theorem in the context of ergodic theory,
called today the Furstenberg-Zimmer structure theorem for ergodic systems, 
which is the main tool for Furstenberg's ergodic version of Szemer\'edi's theorem, \cite{Sz}.

Roughly speaking, an extension $\Xb \to \Yb$ of ergodic dynamical systems is {\em compact} 
when $\Xb$ is a skew product over $\Yb$ with fibers all of which have the form of a homogeneous space
$K/H$, where $K$ is a compact group and $H < K$ a closed subgroup.
In the special case when $H$ is trivial the extension is called a {\em group extension}
and in this case $K$ consists of a compact group of automorphisms of the system $\Xb$ with
$\Yb \cong \Xb/K$. In fact, it turns out that every compact extension is of this form.

In his works \cite{F-77} and \cite{F-81} Furstenberg defines an ergodic 
measure theoretical system $\Xb$ to be distal
if it is obtained as an iteration of countably many compact extensions, where in the
(possibly transfinite construction) at a limit ordinal one takes an inverse limit.

Now W. Parry in his 1967 paper \cite{Pa} suggested an intrinsic
definition of measure distality. He defines
a property of measure dynamical systems, called
``admitting a separating sieve", which imitates the intrinsic
definition of topological distality as follows: 

\begin{quote}
Let $\Xb = (X, \mathcal{X}, \mu, T)$ be an ergodic system.
A sequence $A_1\supset A_2\supset \cdots$ of sets in $\Xcal$
with $\mu(A_n)>0$ and $\mu(A_n)\to 0$,
is called a {\em separating sieve \/}\label{def-sep-sieve-ext}
if there exists a
subset $X_0\subset X$ with $\mu(X_0)=1$ such that for every
$x,x'\in X_0$, the condition ``for every $n\in \N$ there exists $k \in \Z$ with
$T^k x, T^k x'\in A_n$" implies $x=x'$.
\end{quote}

In 1976 in two fundamental papers \cite{Z-76a}, \cite{Z-76b}
R. Zimmer developed the theory of distal systems
and distal extensions for a general locally compact acting
group. He showed that, as in the topologically distal case,
systems admitting Parry's separating sieve are exactly
those with generalized discrete spectrum, that is those systems
which are exhausted by their Furstenberg tower of
compact extensions.

An extension of dynamical systems $\pi: \Xb \to \Yb$ is called a {\em {relatively weakly mixing extension}}
when the corresponding relative product
$(X \underset Y \times X, \mu \underset \nu \times \mu, T)$ is ergodic.
In particular $\Xb$ is {\em {weakly mixing}} when the product system $\Xb \times \Xb$
is ergodic. 

\br 

The Furstenberg-Zimmer structure theorem says that every ergodic  dynamical system
has a unique structure as a relative weakly mixing extension of a distal system,
and that the latter admits a uniquely defined canonical distal tower.
%
%
The so called {\em canonical distal tower} is unique if at each stage one takes the maximal compact extension
(within $\Xb$).
The height of this tower (a countable ordinal) is called the {\em rank} of the distal system $\Xb$.

\br

In \cite{BF} Beleznay and Foreman show that for every countable ordinal 
$\eta$ there is an ergodic distal system of rank $\eta$. 
Our main result in the present work is as follows.

\begin{thm}\label{thm-main}
Given an arbitrary countable ordinal $\eta$ and a transfinite sequence of pairs
$$
\{(K_\al, H_\al) : \al = 0, \ {\text{and}} \  \al < \eta,\ \al \  {\text{ a successor ordinal}} \},
$$
where for each $\al$ $K_\al$ is an infinite compact second countable topological group
and $\{e\} \leq H_\al < K_\al$, a proper closed subgroup of infinite index in $K_\al$, 
with the only requirement that $K_0$ be an infinite monothetic compact group
(and $H_0 = \{e\}$),
there exists (generically) an ergodic distal system $\Xb$ of rank $\eta$ such that, in its canonical distal tower,
for each successor ordinal $\al$ the extension $\Xb_\al \to \Xb_{\al -1}$ is a $K_{\al-1}/ H_{\al-1}$-extension
(here $\Xb_0$ is the trivial one point system).
\end{thm}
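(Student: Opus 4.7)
The plan is to build the tower $\{\Xb_\alpha : \alpha \le \eta\}$ by transfinite recursion on $\alpha$. Take $\Xb_0$ to be the trivial system. At a limit ordinal $\lambda$, set $\Xb_\lambda$ to be the inverse limit of $\{\Xb_\beta : \beta < \lambda\}$; this is automatically ergodic and, being an increasing union of isometric extensions, distal. At a successor ordinal $\alpha$, construct $\Xb_\alpha$ as a $K_{\alpha-1}/H_{\alpha-1}$-extension of $\Xb_{\alpha-1}$ by choosing a measurable cocycle $\sigma_{\alpha-1} \colon \Xb_{\alpha-1} \to K_{\alpha-1}$, forming the group skew product $\Xb_{\alpha-1} \times_{\sigma_{\alpha-1}} K_{\alpha-1}$, and then quotienting by the right $H_{\alpha-1}$-action on the fiber. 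At stage $\alpha=1$, the assumption that $K_0$ is monothetic ensures an ergodic rotation as the starting base.

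The space of measurable cocycles $\sigma\colon \Xb_{\alpha-1} \to K_{\alpha-1}$, with the topology of convergence in measure, is a Polish group, and the aim is to pick $\sigma_{\alpha-1}$ generically so that two kinds of conditions hold simultaneously. First, the resulting extension must be ergodic: this is a classical Anzai--Veech-type condition, and for an infinite ergodic base together with an infinite second countable compact $K_{\alpha-1}$, a Halmos--Rokhlin approximation argument shows that the set of cocycles yielding an ergodic skew product is a dense $G_\delta$ (and the analogous statement holds modulo $H_{\alpha-1}$). Second, and more delicately, $\Xb_\alpha \to \Xb_{\alpha-1}$ must be the \emph{maximal} isometric extension of $\Xb_{\alpha-1}$ inside the eventual system $\Xb = \Xb_\eta$, so that the constructed tower coincides with the canonical distal tower of $\Xb_\eta$. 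This ``canonicity'' at stage $\alpha$ translates into a relative weak-mixing condition on every \emph{later} cocycle $\sigma_{\beta-1}$ with $\beta > \alpha$: these cocycles must not introduce any new $K_{\alpha-1}/H_{\alpha-1}$-equivariant almost periodic functions over $\Xb_{\alpha-1}$, a condition that is again dense $G_\delta$ in $\sigma_{\beta-1}$.

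The main obstacle is orchestrating this transfinite Baire-category argument so that countably many generic requirements, indexed by pairs $(\alpha,\beta)$ with $\alpha < \beta \le \eta$, can be met simultaneously. The natural remedy is to embed the whole construction in a single master Polish space parameterizing compatible cocycle sequences along the entire tower, write each generic condition as a dense $G_\delta$ subset, and invoke the Baire category theorem to extract one sequence meeting them all. The subtlest point is propagating through limit ordinals: one needs $\Xb_\lambda$ to be an infinite ergodic system admitting the required dense $G_\delta$ of cocycles into $K_\lambda$, which holds because the tower below $\lambda$ remains infinite and ergodic and the coboundary obstructions remain meager over any such base. The scheme generalizes the rank-realization of Beleznay--Foreman \cite{BF}, promoting it from mere existence of towers of height $\eta$ to existence with arbitrary prescribed infinite quotients $K_\alpha/H_\alpha$ at each successor level.
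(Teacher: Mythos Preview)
Your high-level architecture --- transfinite recursion with generic cocycles, imposing both ergodicity and canonicity conditions --- matches the paper's strategy. However, there are two places where your proposal diverges or falls short.

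First, the ``master Polish space parameterizing compatible cocycle sequences along the entire tower'' is problematic: the domain of $\sigma_\alpha$ is $\Xb_\alpha$, which itself depends on all the earlier $\sigma_\beta$, so there is no obvious single Polish product to form up front. The paper avoids this entirely by proceeding sequentially: at stage $\alpha$ the system $\Xb_\alpha$ is already fixed (and, inductively, of rank $\alpha$), and one works in the single Polish space $\Ccal(\Xb_\alpha, K_\alpha)$. The countably many conditions needed at that stage --- that every earlier step $\Xb_{\beta+1}\to\Xb_\beta$ remain the maximal compact extension inside the new $\Xb_{\alpha+1}$ --- are all imposed on the \emph{single} new cocycle $\sigma_\alpha$, and Baire is invoked there. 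Two easy transitivity lemmas (Lemmas \ref{succ} and \ref{IL}) reduce Theorem \ref{thm-main} to the statement that a generic $G$-extension of a rank-$\alpha$ system has rank $\alpha+1$ (Theorems \ref{rank-succ} and \ref{rank-limit}); your scheme of indexing conditions by all pairs $(\alpha,\beta)$ simultaneously is not needed.

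Second, and more seriously, you assert without justification that the canonicity condition is dense $G_\delta$ in the cocycle, calling it a ``relative weak-mixing condition''. This is precisely where the real work lies, and that phrase does not quite capture it. The paper's key device is Lemma \ref{BF}: $\Yb\to\Zb$ is the maximal compact extension inside $\Xb$ if and only if the extension $\Xb\underset{\Zb}{\times}\Xb \to \Yb\underset{\Zb}{\times}\Yb$ is \emph{relatively ergodic}. This reformulation turns canonicity into an essential-value problem for the diagonal cocycle $\psi(x_1,x_2)=(\phi(x_1),\phi(x_2))$ into $G\times G$, over each ergodic component of $\Yb\underset{\Zb}{\times}\Yb$ (parametrized by $K/H$). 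Proving genericity of this condition (Theorem \ref{max2}, Proposition \ref{open-dense2}) is the technical heart of the paper: it requires Rokhlin towers over $\Zb$, a \emph{random permutation} argument to produce a single element $\tau\in[T_X]_f$ that works uniformly across a large set of ergodic components, and a probabilistic estimate (Lemma \ref{simple}) to find one randomized perturbation of $\phi_0$ succeeding simultaneously for most $k_0\in K$. Your ``Halmos--Rokhlin approximation argument'' handles only the ergodicity of a single skew product (Theorem \ref{generica}); without the fiber-product criterion and the machinery just described, the canonicity claim is unproved and the proposal remains an outline.
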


Note that in \cite{BF} the authors prove their result via an explicit
inductive construction on the infinite torus $\T^\N$, so that at each stage
the extending compact group is $\T$, whereas in
our construction the compact groups 
(and more generally also their quotients), which serve as building
blocks for the tower, are arbitrary. Moreover, we show that at each
successor ordinal the construction yields a generic extension.

We also note that Theorem \ref{thm-main} is in fact the best result one can prove 
regarding the Furstenberg-Zimmer structure theorem, since the requirement
that all the groups $K_\al$ be infinite is really necessary, see Remark \ref{finite} below.

\br

The relative version of the Furstenberg-Zimmer theorem 
says that for any given extension of ergodic systems $\Yb \to \Zb$,
there is a diagram  $\Yb \to \Yb_{rd} \to \Zb$, 
where $\Yb_{rd}$ is the largest relative distal extension of $\Zb$ in $\Yb$,
with a uniquely defined canonical relatively distal tower, 
and such that the extension 
$\Xb \to \Yb_{rd}$ is relatively weakly mixing.
As Theorems \ref{max2} and \ref{max} of the present work 
are proven over an arbitrary infinite ergodic system $\Zb$,
it follows that our proof of Theorem \ref{thm-main} works in the relative case as well,
producing a canonical relative distal tower of height $\eta$ over $\Zb$
(in fact, in view of Theorem \ref{generica}, when $\Zb$ is infinite ergodic, the assumption on
$K_1$ can be relaxed, we only need it to be infinite).

\begin{thm}\label{general-relative}
Let $\Zb$ be an infinite ergodic system.
Given an arbitrary countable ordinal $\eta$ and a transfinite sequence of pairs
$$
\{(K_\al, H_\al) : \al = 0 , \ {\text{and}}\ \al < \eta,\ \al \  {\text{ a successor ordinal}} \},
$$
where for each $\al$ $K_\al$ is an infinite compact second countable topological group
and $\{e\} \leq H_\al < K_\al$, a proper closed subgroup of infinite index in $K_\al$, 
there exists (generically) an ergodic system $\Xb$ 
which is relatively distal over $\Zb$ of rank $\eta$ 
such that, in its canonical distal tower,
for each successor $\al$ the extension $\Xb_\al \to \Xb_{\al -1}$ is a $K_{\al-1}/ H_{\al-1}$-extension
(here $\Xb_0$ is the system $\Zb$).
\end{thm}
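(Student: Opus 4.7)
The plan is to imitate the proof of Theorem \ref{thm-main} step by step, now over the base system $\Zb$ in place of the trivial one-point system. This transposition is essentially mechanical because Theorems \ref{max2}, \ref{max} and \ref{generica} are already stated and proved over an arbitrary infinite ergodic base. The construction is a transfinite recursion producing a tower
\[
\Zb \;=\; \Xb_0 \;\to\; \Xb_1 \;\to\; \cdots \;\to\; \Xb_\al \;\to\; \cdots \;\to\; \Xb_\eta \;=\; \Xb,
\]
in which each successor extension $\Xb_{\al} \to \Xb_{\al-1}$ is a $K_{\al-1}/H_{\al-1}$-extension and each limit stage is an inverse limit.

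At a successor ordinal $\al = \beta + 1$, given the ergodic extension $\Xb_\beta \to \Zb$ already built, I parametrise candidate $K_\beta$-extensions of $\Xb_\beta$ by measurable cocycles $\Xb_\beta \to K_\beta$, topologised as a Polish space under convergence in measure. Applying Theorem \ref{max} (and, for the first step $\al = 1$, Theorem \ref{generica}, whose conclusion remains valid once $\Zb$ is infinite ergodic), the set of cocycles for which the resulting $K_\beta$-skew-product over $\Xb_\beta$ is ergodic \emph{and} realises the maximal compact extension of $\Xb_\beta$ inside the next stage is comeager. Passing to the quotient by the closed subgroup $H_\beta$ then gives the desired $K_\beta/H_\beta$-extension $\Xb_\al$.

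At a limit ordinal $\al$ I set $\Xb_\al := \varprojlim_{\beta < \al} \Xb_\beta$; this remains ergodic over $\Zb$, and is relatively distal over $\Zb$ as an inverse limit of relatively distal extensions. Since there are only countably many successor ordinals below $\eta$, the entire construction takes place inside a countable product of Polish cocycle spaces. At each successor stage the comeager genericity condition supplied by Theorems \ref{max2} and \ref{max} pulls back to a comeager condition on the full parameter space; the Baire category theorem then furnishes a comeager set of parameter sequences for which every stage simultaneously enjoys the required ergodicity and maximality, yielding the asserted generic existence of $\Xb$.

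The main obstacle is verifying that the tower so built really is the \emph{canonical} relatively distal tower of $\Xb$ over $\Zb$, i.e.\ that at each successor ordinal $\Xb_\al$ is the maximal compact extension of $\Xb_{\al-1}$ inside $\Xb_\eta$, with no intermediate compact extensions sneaking in at limit ordinals. Global maximality is propagated from the local maximality at each successor step by the genericity in Theorems \ref{max2} and \ref{max}, which forces any compact sub-extension of a later stage $\Xb_\gamma \to \Xb_{\al-1}$ to factor through $\Xb_\al$. At a limit ordinal no new compact extension over a fixed $\Xb_\beta$ can arise in the inverse limit beyond those already present in the $\Xb_\gamma$ for $\beta < \gamma < \al$, again by successor-stage maximality, and this is precisely where the standing assumption that every $K_\al$ be infinite is used, as indicated in Remark \ref{finite}.
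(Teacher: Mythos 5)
Your proof follows the paper's own route: Theorems \ref{max2} and \ref{max} (together with Lemmas \ref{BF}, \ref{succ}, \ref{IL}) are already established over an arbitrary infinite ergodic base $\Zb$, so the transfinite recursion used for Theorem \ref{thm-main} transfers directly, with the first stage handled by Theorem \ref{generica} and the monotheticity hypothesis on $K_0$ dropped now that $\Zb$ is infinite. A small correction of emphasis: the hypothesis that each $K_\al/H_\al$ be infinite is not used primarily at the limit-ordinal stages as you suggest — it is essential at every successor step, where Proposition \ref{open-dense2} needs $\la_K(H)=0$ to make the diagonal ergodic component of $\Yb\underset{\Zb}{\times}\Yb$ negligible (this is the content of Remark \ref{finite}), whereas Lemma \ref{IL}, the limit-ordinal workhorse, requires no such hypothesis.
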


\br

After a preliminary section, where we introduce the basic definitions (Section \ref{sec-preli}),
we describe in Section \ref{sec-rank} the strategy of the proof of the main theorem.
Then, as a preliminary result, whose proof will indicate for the reader 
an essential trait of the general strategy,
we show in Section \ref{sec-toy},
that given an ergodic system $\Xb$ and a compact topological group $G$,
the generic cocycle $\phi : X \to G$ induces an ergodic skew product extension $\Xb_\phi \to \Xb$.
This is a generalization of a theorem of Jones and Parry \cite{JP} where the authors proved
this result for an abelian $G$.

We then go on with the main proof, by stages, in Section  \ref{sec-ranks}.
In Section \ref{sec-cor} we draw some corollaries of Theorem \ref{thm-main}.
In Section \ref{sec-wm-ext} we prove an analogous statement about generic group extensions 
over a weakly mixing system (and more generally over a relatively weakly mixing extension).
Finally, in the last section (Section  \ref{sec-con}) 
we present a general framework for our results and prove a master theorem
of which most of our main results are consequences.

\br

\section{Some preliminaries and structure theory}\label{sec-preli}

A {\em  dynamical system} (sometimes also called a {\em $\Z$-action}) is a quadruple 
$\Xb = (X, \mathcal{X}, \mu, T)$,
where $(X,\mathcal{X},\mu)$ is a standard probability space
$T$ is an element of the
Polish group $\Aut(X,\mu)$ of invertible measure preserving transformation of
$(X, \mathcal{X}, \mu)$. 
When $\mathbf{X}$ and $\mathbf{Y} = (Y. \mathcal{Y}, \nu, T)$ are two dynamical systems, we say that
$\mathbf{Y}$ is a {\em factor} of $\mathbf{X}$ ( or that 
$\mathbf{X}$ is an {\em extension} of $\mathbf{Y}$) is there is 
a measurable map $\pi : X \to Y$ such that $\pi_*(\mu)=\nu$ and such that
$\pi(T x) = T\pi(x)$ for $\mu$ almost every $x \in X$.
The map $\pi$ is called a {\em factor map} (or an {\em extension}).

The system $\mathbf{X}$ is {\em ergodic} if every $T$-invariant set $A \in \mathcal{X}$
(i.e. $T A = A \pmod \mu$ is trivial :  $\mu(A)(1 -  \mu(A))=0$).

Let $\Yb$ be a dynamical system and $(V,\Vcal,\rho)$
a standard probability space. Let $S \mapsto S_y$ be 
a measurable map $Y \to {\Aut}(V,\rho)$;   then 
$S$ defines a  {\em cocycle}, i.e. a function $\tilde{S} : \Z \times Y \to {\Aut}(V,\rho)$,
$$
\tilde{S}(n,y)=\begin{cases}
S_{T^{n-1} y}\circ \cdots \circ S_{Ty} \circ S_y & {\text{for}}\  n\ge 1\\
\id & {\text{for}}\  n=0\\
S^{-1}_{T^ny} \circ \cdots \circ S^{-1}_{Ty}  & {\text{for}}\  n< 0.
\end{cases}
$$

We define the {\em skew-product system\/}\label{def-skew-prod}
$\Yb\underset{S}{\times}(V,\rho)$ to be the system
$(Y\times V,\Ycal\otimes\Vcal,\mu\times\rho,T_S)$, where
$T_S(y,v)=(Ty, S_y(v))$. 

\br 

In the special case where $V$ is a compact group and $\rho$ is its normalized Haar measure,
any measurable function $\phi : Y \to V$ defines a skew product by the formula:
\begin{align*}
T_\phi(y,v) & = (Ty, \phi(y) v), \ y \in Y, v \in V, \ {\text{and}}\\
T^n_\phi(y,v) & = (T^ny, \phi_n(y) v), \ n \in \Z.
\end{align*}
Here $\tilde{\phi}(n,y) = \phi_n(y) = \phi(T^{n-1}y) \cdots \phi(Ty) \cdot \phi(y)$
for $n > 0$ and a similar formula for $n < 0$.

\br

We have the following basic theorem:

\begin{thm}[Rokhlin]\label{rsp}
Let $\pi : \Xb\to\Yb$ be a factor map of dynamical systems with
$\Xb$ ergodic, then $\Xb$ is isomorphic to a skew-product
over $\Yb$. Explicitly, there exist a standard probability space
$(V,\Vcal,\rho)$ and  a measurable map $S:  Y \to
{\Aut}(V,\rho)$ with $\Xb \cong \Yb \times_S (V,\rho)
=(Y\times V,\Ycal\otimes\Vcal,\nu\times\rho,T_S)$, where
$T_S(y,u)=(Ty,S_y(v))$, and $\pi(y,v) = y$.
\end{thm}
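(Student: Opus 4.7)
The plan is to combine Rokhlin's disintegration theorem with a measurable selection of fiber isomorphisms. First I would apply Rokhlin's disintegration theorem to the factor map $\pi : \Xb \to \Yb$ to obtain a family of conditional probability measures $\{\mu_y\}_{y \in Y}$ on $X$, measurable in $y$, with $\mu_y(\pi^{-1}(y)) = 1$ for $\nu$-a.e.\ $y$ and $\mu = \int_Y \mu_y \, d\nu(y)$. Because $\pi$ intertwines the dynamics, the map $T$ sends $\pi^{-1}(y)$ to $\pi^{-1}(Ty)$ and so the push-forward $T_* \mu_y$ is a measure on $\pi^{-1}(Ty)$; by the essential uniqueness of the disintegration one has $T_*\mu_y = \mu_{Ty}$ for $\nu$-a.e.\ $y$.

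Next I would use ergodicity of $\Xb$ to show that the measure-theoretic isomorphism class of the standard probability space $(\pi^{-1}(y), \mu_y)$ is $\nu$-a.e.\ constant. Indeed, the type of a standard probability space (the number of atoms of each mass, together with the total non-atomic mass) is a Borel invariant that depends measurably on $y$, and is $T$-invariant since $T : (\pi^{-1}(y), \mu_y) \to (\pi^{-1}(Ty), \mu_{Ty})$ is an isomorphism; ergodicity then forces it to be constant. Let $(V, \Vcal, \rho)$ be a standard representative of this common isomorphism class.

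Then I would invoke a measurable selection theorem to choose, for $\nu$-a.e.\ $y$, a measure-isomorphism $\theta_y : (\pi^{-1}(y), \mu_y) \to (V, \rho)$, depending measurably on $y$. This is the classical fact that the Polish groupoid of isomorphisms between standard probability spaces of a given type admits a Borel section over a Borel family; concretely one can realize $X$ as a Borel subset of $Y \times [0,1]$ and apply results on measurable fields of measures together with a standard Borel selection. Having the $\theta_y$ in hand, define $\Phi : X \to Y \times V$ by $\Phi(x) = (\pi(x), \theta_{\pi(x)}(x))$ and $S_y := \theta_{Ty} \circ T \circ \theta_y^{-1} \in \Aut(V,\rho)$. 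By construction $\Phi$ is a measure-isomorphism of $(X,\mu)$ with $(Y \times V, \nu \times \rho)$ that carries $T$ to $T_S(y,v) = (Ty, S_y(v))$ and satisfies $\pi \circ \Phi^{-1} = \proj_Y$, which is the required statement.

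The main obstacle here is the measurable selection of the fiber isomorphisms $\theta_y$, together with verifying that $y \mapsto S_y$ is Borel as a map into the Polish group $\Aut(V,\rho)$; the set-theoretic pieces are straightforward but they require invoking the standard structure theory of Borel fields of standard probability spaces. Once the $\theta_y$ exist measurably, the identity $T_*\mu_y = \mu_{Ty}$ immediately makes each $S_y$ a measure-preserving automorphism of $(V,\rho)$ and the cocycle identity becomes automatic from the definition.
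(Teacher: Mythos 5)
The paper states this as a classical theorem (Rokhlin's skew-product representation) and does not supply a proof, so there is no in-paper argument to compare against. Your proof sketch is the standard one and is correct: disintegrate $\mu$ over $\pi$, observe from essential uniqueness and $T$-invariance of $\mu$ and $\nu$ that $T_*\mu_y = \mu_{Ty}$ $\nu$-a.e., note that the isomorphism type of the fiber space $(\pi^{-1}(y),\mu_y)$ is a $T$-invariant Borel function of $y$ (so constant a.e.\ by ergodicity of the factor $\Yb$, which is inherited from $\Xb$), and then choose measurably a family of fiber isomorphisms $\theta_y$ onto a fixed model $(V,\rho)$, setting $S_y = \theta_{Ty}\circ T\circ\theta_y^{-1}$. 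You correctly identify the sole genuine technical hurdle — the Borel selection of the $\theta_y$ and the measurability of $y\mapsto S_y$ into $\Aut(V,\rho)$ — and defer it to the standard theory of measurable fields of standard probability spaces; in a full write-up this is where the real work lies (e.g.\ via the von Neumann or Jankov selection theorem after realizing $X\hookrightarrow Y\times[0,1]$), and it would be worth spelling out. Your check that $(S_y)_*\rho=\rho$ follows from $T_*\mu_y=\mu_{Ty}$ is correct, as is the equivariance $\Phi\circ T = T_S\circ\Phi$.
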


The map $y \mapsto S_y$ is called the {\em Rokhlin cocycle} of the extension $\pi$.

\br

The topology on $\Aut(X,\mu)$ is induced by a complete metric
$$
D(S,T) = \sum_{n \in \N} 2^{-n}  (\mu(SA_n \tri TA_n) + \mu(S^{-1}A_n \tri T^{-1}A_n)), 
$$
with $\{A_n\}_{n \in \N}$ a dense sequence in the measure algebra
$(\mathcal{X},d_\mu)$, where $d_\mu(A,B) = \mu(A \tri B)$.
Equipped with this topology $\Aut(X, \mu)$ is a Polish topological group and we say that
the dynamical system $\mathbf{X}$ is {\em compact} if the 
set $\{T^n : n \in \Z\}$ is a precompact subgroup of $\Aut(X,\mu)$.

\begin{exa}
Let $K$ be a compact monothetic topological group;
i.e. there is a homomorphism $\phi : \Z \to K$ with a dense image,
and we let $T x = ax, \ x \in X$, where $a = \phi(1)$
(so that the image of $\phi$ is the dense subgroup $\{a^n : n \in \Z\}$). 
With $\mathcal{K}$ the algebra of Borel subsets of $K$ and 
$\la$ is the normalized Haar measure on $K$, the system $\Xb = (K, \mathcal{K}, \la, T)$,
 is an ergodic compact dynamical system.
\end{exa}

It turns out that, in fact, every ergodic compact system $\mathbf{X}$ has this form.

The notion of compactness can now be relativized as follows:

\br

An extension $\pi : \Xb \to \Yb$, with $\Xb$ ergodic is a {\em compact extension} 
if there is a compact second countable topological group $K$,
a closed subgroup $H < K$ and a measurable map (sometimes called a {\em cocycle}) 
$\phi : Y  \to K$ such that
$$
\Xb\cong \Yb \times_\phi (K/H,\rho)
=(Y\times K/H,\Ycal\otimes\Kcal,\nu\times\rho,T_\phi),
$$
where $\rho$ is the Haar measure on $K/H$ and 
$T_\phi(y, kH) = (Ty, \phi(y)kH)$.
The cocycle $\phi$ is {\em minimal} if there is no cocycle $\psi  : \Ga \times Y \to K$ cohomologous 
to $\phi$ with $K_\psi \subsetneq K_\phi$. where $K_\phi$ and $K_\psi$ are the closed subgroups of $K$
generated by the ranges of $\phi$ and $\psi$ respectively.
(The cocycles $\phi$ and $\psi$ are {\em cohomologous}
when there is a measurable map $\kappa : Y \to K$ such that
$\psi(Ty) = \kappa(Ty)^{-1}\phi(y) \kappa(y),\ \nu$-a.e.)

\begin{thm}\label{homog}
Given a compact extension $\pi : \Xb \to \Yb$ with $\Xb$ ergodic,
we can always assume that 
$$
\Xb\cong \Yb \times_\phi (K/H,\rho)
=(Y\times K/H,\Ycal\otimes\Kcal,\nu\times\rho,T_\phi),
$$
where the cocycle $\phi$ is minimal with $K_\phi= K$.
The corresponding group extension $\hat{\pi} : \hat{\Xb} \to \Yb$,
with $\hat{X} = Y \times K$, is ergodic and the diagram
\begin{equation*}
\xymatrix
{
\hat \Xb =\Yb  \times_\phi K
 \ar[d]_{\hat\pi} \ar[dr]^{\sig}  & \\
\Yb &  \Xb=\Yb \times_\phi K/H \ar[l]^-{\pi}
}
\end{equation*}
commutes. 
Here $\hat\Xb = \Yb \times_\phi K$
is the group skew-product defined by the cocycle $\phi$,\
i.e. $\hat\mu=\nu\times \hat\rho$ where $\hat\rho$ is Haar measure on $K$,
and $\hat \mu$ is ergodic. The map $\sig : \hat{\Xb} \to \Xb$ 
is the quotient map $\sig(y,k) = (y, kH)$.
\end{thm}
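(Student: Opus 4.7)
The plan is to apply a Mackey-range style reduction: starting from any cocycle representing the extension, the ergodic components of the associated group skew product identify a closed subgroup through which the cocycle can be conjugated into minimal form.

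Begin with $\phi:Y\to K$ giving the data of the compact extension, and consider the (a priori non-ergodic) group skew product $\tilde\Xb=\Yb\times_\phi K$ with measure $\nu\times\hat\rho$. Right multiplication by $K$ on the $K$-fiber commutes with $T_\phi$, so $K$ permutes the $T_\phi$-ergodic components. Pick an ergodic component $E$ of positive measure and set
\[
L=\{k\in K \,:\, E\cdot k=E \text{ up to a null set}\},
\]
a closed subgroup by continuity of right translation on the measure algebra. The translates $\{E\cdot k:k\in K\}$ partition $\tilde\Xb$ into ergodic components of equal measure, indexed by $L\backslash K$. Since $\Yb$ is ergodic and the projection $\tilde\Xb\to\Yb$ is $K$-equivariant, each fiber $\{y\}\times K$ meets $E$ in a single coset $f(y)L$; a Borel cross section of $K\to L\backslash K$ lets us choose $f$ measurably, so $E=\{(y,k):f(y)^{-1}k\in L\}$ up to a null set.

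The $T_\phi$-invariance of $E$ forces $f(Ty)^{-1}\phi(y)f(y)\in L$ for $\nu$-a.e.\ $y$, so $\phi_0(y):=f(Ty)^{-1}\phi(y)f(y)$ is a cocycle $Y\to L$ cohomologous to $\phi$ via the transfer function $f$. The resulting isomorphism $\Yb\times_\phi K\cong\Yb\times_{\phi_0}K$ carries $E$ onto $\Yb\times_{\phi_0}L$, so this group extension is ergodic. Furthermore $\phi_0$ is minimal in its cohomology class: if some $\psi\sim\phi_0$ took values in a proper closed $L'\subsetneq L$, then $\Yb\times_\psi L$ would split into invariant subsets of the form $Y\times L'k_0$ with $k_0$ ranging over coset representatives of $L'$ in $L$, contradicting the ergodicity just established.

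It remains to match this reduction with the $K/H$-extension. The same isomorphism identifies $\Xb=\Yb\times_\phi K/H$ with $\Yb\times_{\phi_0}K/H$, whose fiber action factors through that of $L$ on $K/H$; ergodicity of $\Xb$ forces the $L$-orbit $LH/H$ of the basepoint to equal all of $K/H$, i.e.\ $LH=K$, whence $K/H\cong L/(L\cap H)$ as $L$-spaces. After renaming $L$ as $K$ and $L\cap H$ as $H$ we obtain the claim: $\phi_0$ is minimal with $K_{\phi_0}=K$, $\hat\Xb=\Yb\times_{\phi_0}K$ is ergodic, and $\sig(y,k)=(y,kH)$ is a $T_{\phi_0}$-equivariant factor map satisfying $\pi\circ\sig=\hat\pi$. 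The main technical point is the measurable-selection step producing $f$; once this is granted (via the standard Borel structure of $L\backslash K$ and the measurability of ergodic components), the remaining verifications are direct unwindings of definitions.
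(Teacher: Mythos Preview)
The paper does not prove this theorem itself; immediately after the statement it writes ``For the proof and more details see e.g.\ \cite[Corollary 3.27]{G-03}.'' Your Mackey-range reduction is the standard argument and is essentially what one finds in that reference, so in spirit you are aligned with the cited source.

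Two points deserve tightening. First, ``pick an ergodic component $E$ of positive measure'' is not generally possible: when the stabilizer $L$ has infinite index in $K$ the ergodic components are $(\nu\times\hat\rho)$-null. The repair is routine---identify the invariant $\sigma$-algebra of $\tilde\Xb$ with $L\backslash K$ as a right $K$-space (the $K$-action on invariant sets is ergodic, hence a homogeneous action), take $E$ to be the fiber over the identity coset equipped with its ergodic measure, and run your coset argument for $f$ with that measure. Second, in the minimality step you infer a contradiction with the ergodicity of $\Yb\times_{\phi_0}L$ from the non-ergodicity of $\Yb\times_\psi L$; but the cohomology $\psi\sim\phi_0$ is realized by some $\kappa:Y\to K$, not $\kappa:Y\to L$, so these two $L$-extensions need not be isomorphic and the implication does not follow as written. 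The clean fix is to note that the Mackey range $L$ is a conjugacy invariant of the cohomology class, while any $\psi$ with values in $L'$ has Mackey range contained in $L'$; thus $L'\subsetneq L$ would force $gLg^{-1}\subsetneq L$ for some $g\in K$, which is impossible in a compact group (since $g^{-1}$ lies in the closure of $\{g^n:n\ge 0\}$, giving $g^{-1}Lg\subset L$ as well and hence equality).
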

For the proof and more details see e.g. \cite[Corollary 3.27]{G-03}.

\br

This construction can be iterated and a dynamical system $\mathbf{X}$ is called {\em distal} 
if it is an iteration of countably many compact extensions, where in the
(possibly transfinite construction) at a limit ordinal one takes an inverse limit.
The so called {\em canonical distal tower} is unique if at each stage one takes the maximal compact extension
(within $\Xb$).
The height of this tower (a countable ordinal) is called the {\em rank} of the distal system $\Xb$.

\br

An extension of dynamical systems $\pi: \Xb \to \Yb$ is called a {\em {weakly mixing extension}}
when the corresponding relative product
$(X \underset Y \times X, \mu \underset \nu \times \mu, T)$ is ergodic.
In particular $\Xb$ is {\em {weakly mixing}} when the product system $\Xb \times \Xb$
is ergodic. 

We now can state the following

\begin{thm}[Furstenberg-Zimmer structure theorem]
Every ergodic system $\Xb$ has (uniquely) a largest distal factor $\pi : \Xb \to \Yb$
and the extension $\pi$ is a weakly mixing one.
\end{thm}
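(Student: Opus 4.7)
I would construct $\Yb$ as the top of a transfinite tower $\Yb_0 \le \Yb_1 \le \cdots$ of factors of $\Xb$, built by iterating maximal intermediate compact extensions. Start with $\Yb_0$ the trivial one-point factor. At a successor stage, given $\Yb_\alpha$, let $\Yb_{\alpha+1}$ be the maximal intermediate factor $\Yb_\alpha \to \Yb_{\alpha+1} \to \Xb$ with $\Yb_{\alpha+1} \to \Yb_\alpha$ a compact extension; this maximum exists because the family of such intermediate compact extensions of $\Yb_\alpha$ inside $\Xb$ is closed under inverse limits and under finite joins (the join of two compact extensions embeds into their fibered product over $\Yb_\alpha$, whose fibres are homogeneous spaces of compact groups, as in Theorem~\ref{homog}, and factors of compact extensions are compact extensions). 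At limit ordinals take the inverse limit. Because $\Xcal$ is countably generated, the chain stabilizes at some countable ordinal $\eta$, and I set $\Yb := \Yb_\eta$, which is distal by construction.

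The main obstacle is to show that the resulting extension $\pi : \Xb \to \Yb$ is a weakly mixing extension, and for this the essential ingredient is the Furstenberg--Zimmer dichotomy: for any extension of ergodic systems $\pi : \Xb \to \Yb$, either $\pi$ is a weakly mixing extension, or there exists a non-trivial intermediate compact extension $\Yb \to \Zb \to \Xb$. Granting the dichotomy, the second alternative applied to $\Xb \to \Yb_\eta$ would produce a further step $\Yb_{\eta+1} > \Yb_\eta$ in the tower, contradicting stabilization; hence $\Xb \to \Yb$ is weakly mixing. The dichotomy itself I would prove by a Hilbert-module analysis of $L^2(\mu) \ominus L^2(\nu)$ viewed over $L^\infty(Y)$: if the relative product $(X \underset{Y}{\times} X, \mu \underset{\nu}{\times} \mu, T)$ fails to be ergodic, one extracts non-zero vectors whose $T$-orbit is relatively precompact (``relatively almost periodic''), and the closed $T$-invariant algebra generated by the bounded ones is, by a relative Peter--Weyl decomposition together with Theorem~\ref{homog}, isomorphic to $L^\infty(\Yb \times_\phi K/H)$ for some cocycle $\phi : Y \to K$, realising a non-trivial intermediate compact extension.

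For uniqueness, let $\Yb'$ be any distal factor of $\Xb$ with canonical distal tower $\Yb'_0 \le \Yb'_1 \le \cdots \le \Yb'$. By transfinite induction I would show $\Yb'_\beta \le \Yb$ for every $\beta$: assuming $\Yb'_\beta \le \Yb$, the cocycle defining the compact extension $\Yb'_{\beta+1} \to \Yb'_\beta$ pulls back through $\Yb \to \Yb'_\beta$ to a cocycle on $\Yb$, and this realises the join $\Yb \vee \Yb'_{\beta+1}$ inside $\Xb$ as a factor of a compact extension of $\Yb$, hence as a compact extension of $\Yb$. But any intermediate factor of the weakly mixing extension $\Xb \to \Yb$ is itself weakly mixing over $\Yb$ (the relative product of the intermediate factor is a factor of $X \underset{Y}{\times} X$), and a non-trivial compact extension cannot be weakly mixing, so $\Yb \vee \Yb'_{\beta+1} = \Yb$ and $\Yb'_{\beta+1} \le \Yb$. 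Passing to limits yields $\Yb' \le \Yb$, which identifies $\Yb$ as the unique largest distal factor.
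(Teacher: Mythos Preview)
The paper does not actually prove this theorem: the Furstenberg--Zimmer structure theorem is quoted as background, with references to \cite{F-77}, \cite{F-81}, \cite{Z-76a}, \cite{Z-76b}, and the reader is directed to \cite{G-03} for details. So there is no ``paper's own proof'' to compare against.

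That said, your outline is the standard argument one finds in those references: build the distal tower by transfinitely iterating the maximal intermediate compact extension, invoke the Furstenberg--Zimmer dichotomy (either relatively weakly mixing or admits a nontrivial intermediate compact extension) to see the tower must terminate in a factor over which $\Xb$ is relatively weakly mixing, and deduce maximality/uniqueness from the fact that a compact extension cannot sit inside a relatively weakly mixing one. The substance is correct and is exactly what the cited sources do.

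Two small points. First, your arrows are reversed relative to the paper's convention: in this paper $\Xb \to \Yb$ means $\Yb$ is a factor of $\Xb$, so your ``$\Yb_\alpha \to \Yb_{\alpha+1} \to \Xb$'' and ``$\Yb \to \Zb \to \Xb$'' should read $\Xb \to \Yb_{\alpha+1} \to \Yb_\alpha$ and $\Xb \to \Zb \to \Yb$. Second, your sketch of the dichotomy is where all the real work lies; what you wrote is an accurate roadmap, but the ``relative Peter--Weyl'' step (passing from the existence of a nontrivial invariant function on $X \underset{Y}{\times} X$ to a finitely generated $T$-invariant $L^\infty(Y)$-module of $L^2(X)$-functions, and from there to a $K/H$-extension as in Theorem~\ref{homog}) is the genuine content and would need to be fleshed out if you were writing a self-contained proof.
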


\br

In \cite{BF} Beleznay and Foreman show that for every countable ordinal 
$\eta$ there is an ergodic distal system of rank $\eta$.
We refer e.g. to \cite{G-03} for more details on structure theory in ergodic theory.

\br


\section{The strategy of the proof of Theorem \ref{thm-main}}\label{sec-rank}

We begin with a few comments on Theorem \ref{thm-main}.

%
%
%
%

\begin{rmks}\label{monothetic}
\begin{enumerate}
\item
Whereas in \cite{BF} the authors prove their result for the special case where 
$K_\al = \mathbb{T} = \R/\Z$ for all $\al$, in
our construction the compact groups (or their quotients) which serve as building
blocks for the tower are arbitrary. 
\item
Furthermore, our constructions yield
generic extensions at each successor ordinal.
\item
Note however that, at the first stage of the tower, 
for the infinite monothetic $K_0$, the set of topological generators
$K_g = \{k \in K_0 : \ol{\{k^n : n \in \Z\}} = K_0\}$ is a dense $G_\del$ subset of $K_0$
iff $K_0$ does not admit a nontrivial finite quotient group.
\end{enumerate}
\end{rmks}

\begin{proof}[Proof of the latter remark]
Let $\{U_m\}_{m \in \N}$ be a basis for the topology of $K_0$.
For each $m$ set
$$
\Ucal_m = \{k \in K_0 : \exists n \in \N, \ k^n \in U_m\}.
$$
Clearly each $\Ucal_m$ is open, and $K_g = \bigcap_{m \in \N} \Ucal_m$.
Thus $K_g$ is always a $G_\del$ set.
We assume that $K_0$ is monothetic so it has at least one generator, say $k_0$;
so that $K_g$ is nonempty.

Now if for each $0 \not = n \in \Z$,  $k^n_0$ is a topological generator, then $\{k_0^n  : n \in \Z\}$
is a subset of $K_g$ and it follows that $K_g$ is a dense $G_\del$ subset of $K_0$.
Otherwise, there is $t \geq 2$ such that the subgroup $N = \ol{\{k_0^{tn}  : n \in \Z\}}$
is a proper subgroup of finite index $[N, K_0] = t$.
So clearly in this case $K_g$ is not dense.
\end{proof}

\br

\begin{defn}
An extension $\Xb \to \Yb$ of dynamical systems (not necessarily ergodic) is
{\em relatively ergodic}, or that {\em $\Xb$ is relatively ergodic over $\Yb$}, 
if every invariant $L_2(\mu)$ function is $\Ycal$ measurable.
\end{defn}
%
%
%

In the sequel we will repeatedly use the following lemma which is explicitly formulated in 
\cite[Lemma 2.8]{BF}.  The authors of \cite{BF} base their proof  on the characterization of compact 
extensions in \cite[Theorem 6.13]{F-81}.
We give here a brief proof based on \cite[Theorem 7.1]{F-77}.

\begin{lem}\label{BF}
Let $\Xb$ be an ergodic system.
Let $\Zb$ be a factor of $\Xb$ and $\Yb$ be a compact extension of $\Zb$  in $\Xb$
(i.e. $\Xb \to \Yb \to \Zb$). Then $\Yb$ is the maximal compact extension of
$\Zb$ in $\Xb$ iff $\Xb \underset{\Zb} { \times} \Xb$ is relatively ergodic over 
$\Yb \underset{\Zb} { \times} \Yb$.
\end{lem}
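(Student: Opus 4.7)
The plan is to prove the two implications separately: one direction is a direct construction from the compact-group picture provided by Theorem \ref{homog}, while the other invokes Furstenberg's characterization of compact extensions via relatively invariant functions on the fibered square (Theorem 7.1 of \cite{F-77}).

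For the direction $(\Leftarrow)$, assume $\Xb \times_\Zb \Xb$ is relatively ergodic over $\Yb \times_\Zb \Yb$ and suppose toward contradiction that some compact extension $\Yb'$ of $\Zb$ strictly contains $\Yb$ inside $\Xb$. Apply Theorem \ref{homog} to $\Yb' \to \Zb$ to write $\Yb' \cong \Zb \times_{\phi'} K'/H'$ with a minimal cocycle $\phi'$ (so $K_{\phi'} = K'$). Since $\Yb$ is an intermediate factor of $\Yb'$ over $\Zb$, the Mackey correspondence for homogeneous extensions gives $\Yb \cong \Zb \times_{\phi'} K'/H''$ for a closed subgroup $H'$ with $H' \subsetneq H'' \subsetneq K'$. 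A direct computation, using ergodicity of $\Zb$ and minimality of $\phi'$, identifies the $T \times T$-invariant $\sigma$-algebras of $\Yb' \times_\Zb \Yb'$ and $\Yb \times_\Zb \Yb$ with (bounded functions on) the double coset spaces $H' \bks K' / H'$ and $H'' \bks K' / H''$ respectively. Since the inclusion $H' \subsetneq H''$ makes the second a proper quotient of the first, one can pick a bounded $T \times T$-invariant function $F_0$ on $\Yb' \times_\Zb \Yb'$ that is not $(\Yb \times_\Zb \Yb)$-measurable; pulling $F_0$ back along the factor map $\Xb \times_\Zb \Xb \to \Yb' \times_\Zb \Yb'$ contradicts relative ergodicity.

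For the direction $(\Rightarrow)$, assume that $\Yb$ is the maximal compact extension of $\Zb$ in $\Xb$ and let $F \in L^2(\Xb \times_\Zb \Xb)$ be an arbitrary $T \times T$-invariant function. Viewing $F$ as a kernel, define an operator $A_F : L^2(\Xb) \to L^2(\Xb)$ by $(A_F g)(x) = \int F(x, x')\, g(x')\, d\mu_{\pi'(x)}(x')$, where $\mu_z$ denotes the disintegration of $\mu$ over $\pi' : \Xb \to \Zb$. The invariance of $F$ makes $A_F$ commute with the Koopman operator $U_T$, and membership of $F$ in $L^2(\Xb \times_\Zb \Xb)$ makes $A_F$ fiberwise Hilbert--Schmidt over $\Zb$. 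Theorem 7.1 of \cite{F-77} then guarantees that the range of $A_F$ is contained in the closed span of the almost periodic functions over $\Zb$, hence in $L^2(\Yb)$ by maximality. Applying the same argument to $A_F^*$ in the second variable forces $F$ itself to be $(\Yb \times_\Zb \Yb)$-measurable, proving relative ergodicity. The main obstacle is precisely this spectral step -- converting an invariant relative $L^2$ kernel into almost periodic eigenfunctions over $\Zb$ -- which is the content of Furstenberg's Theorem 7.1 and what makes the promised ``brief'' proof possible.
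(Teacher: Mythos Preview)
Your proof is correct and follows the same two--part strategy as the paper, but both halves are implemented differently, and in each case the paper's version is shorter.

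For $(\Leftarrow)$, the paper does not parametrize both $\Yb$ and the larger extension over $\Zb$ via the Mackey correspondence and double cosets. Instead it writes the maximal compact extension $\tilde{\Yb}$ as a $K/H$--extension of $\Yb$ itself (not of $\Zb$), and simply observes that over any ergodic component $W\subset \Yb\times_\Zb\Yb$ the system $\tilde{\Yb}\times_\Zb\tilde{\Yb}$ decomposes nontrivially according to the ergodic decomposition of the diagonal $K$--action on $K/H\times K/H$; a fortiori $\Xb\times_\Zb\Xb\to\Yb\times_\Zb\Yb$ is not relatively ergodic. This bypasses the intermediate--factor theorem and the $H'\backslash K'/H'$ versus $H''\backslash K'/H''$ comparison that you carry out (which is correct, but more laborious). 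One small slip: you write $H'\subsetneq H''\subsetneq K'$, but the second inclusion need not be strict (nothing excludes $\Yb=\Zb$); this is harmless for your argument.

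For $(\Rightarrow)$, the paper quotes \cite[Theorem~7.1]{F-77} in the form: any $T\times T$--invariant $f\in L_2(\Xb\times_\Zb\Xb)$ already lies in $L_2(\tilde{\Yb}\times_\Zb\tilde{\Yb})$, and since $\tilde{\Yb}=\Yb$ by hypothesis the proof is finished in one line. Your route via the fiberwise Hilbert--Schmidt operator $A_F$, its commutation with $U_T$, and the reduction to each variable separately is exactly the mechanism behind Theorem~7.1, so you are effectively reproving (a piece of) what the paper merely cites.
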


\begin{proof}
Let $\tilde{\Yb}$ be the maximal compact extension of $\Zb$ in $\Xb$, so that
we have the diagram $\Xb \to \tilde{\Yb} \to \Yb \to \Zb$. 

Suppose first that the map $\tilde{\Yb} \to \Yb$ is not an isomorphism. 
As this map is a compact extension we can represent $\tilde{\Yb}$ as a skew product over $\Yb$ :
$$
\tilde{\Yb} = \Yb \times_\phi K/H,
$$
with $K$ a compact group and $H < K$ a closed subgroup
and $\phi : Y \to K$ a measurable cocycle.
Then, the extension $\tilde{\Yb} \underset{\Zb} { \times} \tilde{\Yb} 
\to \Yb \underset{\Zb} { \times} \Yb$ is not relatively ergodic.
Indeed, above any  ergodic component $W \subset \Yb \underset{\Zb} { \times} \Yb$,
we have a nontrivial ergodic decomposition which corresponds to the 
ergodic decomposition of the diagonal action of $K$ on  $K/H \times K/H$.
Now, a fortiori, the extension 
$\Xb \underset{\Zb} { \times} \Xb \to \Yb \underset{\Zb} { \times} \Yb$
is not relatively ergodic. 


For the other direction assume that $\tilde{\Yb} = \Yb$ and let $f$ be an invariant 
function in $L_2(\Xb \underset{\Zb} { \times} \Xb)$.
By \cite[Theorem 7.1]{F-77} the function $f$ is a member of the Hilbert space
$L_2(\tilde{\Yb} \underset{\Zb} { \times} \tilde{\Yb}) =
L_2(\Yb \underset{\Zb} { \times} \Yb)$. 
\end{proof}

\br

\br

\begin{lem}\label{succ}
Let $\Wb \to \Xb \to \Yb \to \Zb$ be a tower of extensions of ergodic systems such that
$\Yb \to \Zb$ is the maximal compact extension of $\Zb$ within $\Xb$, and 
$\Xb \to \Yb$ is the maximal compact extension of $\Yb$ within $\Wb$.
Then $\Yb \to \Zb$ is the maximal compact extension of $\Zb$ within $\Wb$.
\end{lem}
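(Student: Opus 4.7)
The approach is via the universal property of the maximal compact extension, reducing the proof to a formal double application of maximality. Denote by $\tilde{\Yb}$ the maximal compact extension of $\Zb$ inside $\Wb$; the goal is to show $\tilde{\Yb} = \Yb$. Since $\Yb$ is itself a compact extension of $\Zb$ realized as a factor of $\Wb$, the maximality of $\tilde{\Yb}$ immediately provides a factor map $\tilde{\Yb} \to \Yb$, so it only remains to exhibit a factor map $\Yb \to \tilde{\Yb}$.

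I would first invoke the standard structural fact that, in a chain $\tilde{\Yb} \to \Yb \to \Zb$ whose composite $\tilde{\Yb} \to \Zb$ is compact, the upper extension $\tilde{\Yb} \to \Yb$ is itself compact --- in the cocycle representation $\tilde{\Yb} \cong \Zb \times_\psi K/H$ of Theorem \ref{homog} (taking $\psi$ minimal), intermediate $\Zb$-factors correspond to intermediate closed subgroups $H \leq H' \leq K$, giving $\tilde{\Yb} \to \Yb$ a fiber-wise compact quotient structure. Thus $\tilde{\Yb}$ is a compact extension of $\Yb$ sitting inside $\Wb$, and the hypothesis that $\Xb$ is the \emph{maximal} such extension forces $\tilde{\Yb}$ to be a factor of $\Xb$, slotting it into the tower as $\Wb \to \Xb \to \tilde{\Yb} \to \Yb \to \Zb$.

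Now $\tilde{\Yb}$ is a factor of $\Xb$ and simultaneously a compact extension of $\Zb$, so the hypothesis that $\Yb$ is the maximal compact extension of $\Zb$ inside $\Xb$ supplies one final factor map $\Yb \to \tilde{\Yb}$; combined with the first paragraph this gives $\tilde{\Yb} \cong \Yb$ as factors of $\Wb$, which is the lemma. The only substantive step is the intermediate-is-compact fact used at the start of the second paragraph, and this is where I expect the main (mild) obstacle to lie; it is classical, and can be verified either from the cocycle picture of Theorem \ref{homog} or, more intrinsically, via the relative-ergodicity criterion of Lemma \ref{BF} applied to the composite. Once that is granted, the rest of the proof is pure formal manipulation with the two maximality hypotheses.
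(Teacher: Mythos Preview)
Your proof is correct, and its logical skeleton --- two applications of maximality, first pushing down into $\Xb$, then into $\Yb$ --- is exactly the same as the paper's. The difference is one of packaging: the paper works pointwise at the level of generalized eigenfunctions, whereas you work globally at the level of factors.

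Concretely, the paper argues by contradiction with a single witness: if $\Yb$ were not maximal in $\Wb$, pick an invariant finite-dimensional $L_\infty(Z)$-module $L \subset L_2(W)$ with $L \not\subset L_2(Y)$. Since $L_\infty(Z) \subset L_\infty(Y)$, the same $L$ is an invariant finite-dimensional $L_\infty(Y)$-module; maximality of $\Xb$ over $\Yb$ in $\Wb$ forces $L \subset L_2(X)$, and then maximality of $\Yb$ over $\Zb$ in $\Xb$ forces $L \subset L_2(Y)$, a contradiction. Your ``intermediate-is-compact'' step is the factor-level shadow of the observation $L_\infty(Z) \subset L_\infty(Y)$, and the rest of your argument mirrors the paper's two maximality applications exactly. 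The paper's version is shorter because the module characterization absorbs the intermediate-is-compact lemma into a one-line inclusion of coefficient rings; your version is more structural but needs that lemma as a separate input (your cocycle justification via Theorem~\ref{homog} and the Mackey--Zimmer correspondence for intermediate factors is fine, though invoking Lemma~\ref{BF} for it, as you also suggest, is less direct).
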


\begin{proof}
If the extension $\Yb \to \Zb$ is not the maximal compact extension of $\Zb$ within $\Wb$, then 
there are functions $f_1, f_2, \dots, f_k \in L_2(W)$ such that the finite dimensional $L_\infty(Z)$ module 
$$
L (f_1,\dots,f_k) = \{h_1 f_1 + h_2 f_2 + \dots + h_kf_k : h_i \in L_\infty(Z), i = 1, 2, \dots,k\}
$$
is $\Ga$ invariant, and $L \not\subset L_2(Y)$.
Since $L_\infty(Z) \subset L_\infty(Y)$ it follows that $L$ is also a 
$\Ga$ invariant finite dimensional $L_\infty(Y)$ module, whence, by the maximality of 
$\Xb$ in $\Wb$, we have $L \subset L_2(X)$. Now the maximality of $\Yb$ in $\Xb$
implies that the $L_\infty(Z)$ module $L$ is in fact a subset of $L_2(Y)$ and 
this contradiction proves our claim.
\end{proof}

\br

\begin{lem}\label{IL}
Let $\al$ be a countable limit ordinal.
Let $\Xb$ be an ergodic distal $\Z$-system built as a tower of height $\al$
consisting of group extensions and inverse limits, 
such that for each ordinal $\beta < \al$
the extension $\Xb_{\beta +1}  \to \Xb_{\beta}$ is the maximal compact extension of $\Xb_\beta$ 
within $\Xb_{\beta +2}$, then for each $\beta < \al$ each extension 
$\Xb_{\beta +1}  \to \Xb_{\beta}$ is the maximal compact extension of $\Xb_\beta$ within $\Xb$.  
\end{lem}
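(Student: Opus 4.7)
The plan is to prove, by transfinite induction on $\delta\le\alpha$, the strengthened statement
\[
R(\delta): \text{for every } \beta<\delta,\ \Xb_{\beta+1}\text{ is the maximal compact extension of } \Xb_\beta \text{ within } \Xb_\delta,
\]
so that the conclusion of the lemma is precisely $R(\alpha)$, obtained at the top limit step. At each stage the induction runs over a $\Gamma$-invariant finite-dimensional $L_\infty(\Xb_\beta)$-module $L\subset L_2(\Xb_\delta)$ with the goal of showing $L\subset L_2(\Xb_{\beta+1})$.

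For the limit case ($\delta$ a countable limit ordinal), the identity $L_2(\Xb_\delta)=\overline{\bigcup_{\gamma<\delta} L_2(\Xb_\gamma)}$ coming from the inverse-limit construction does the work: for $\beta<\gamma<\delta$ the conditional expectation $E(L\mid\Xb_\gamma)$ is $L_\infty(\Xb_\beta)$-linear and $T$-equivariant and so is again a $\Gamma$-invariant finite-dimensional $L_\infty(\Xb_\beta)$-module, now inside $L_2(\Xb_\gamma)$; by the inductive hypothesis $R(\gamma)$ it lies in $L_2(\Xb_{\beta+1})$. Letting $\gamma\to\delta$ cofinally and using closedness of $L_2(\Xb_{\beta+1})$ gives $L\subset L_2(\Xb_{\beta+1})$.

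For the successor case $\delta=\gamma+1$, the case $\beta=\gamma$ is trivial, so assume $\beta<\gamma$. When $\gamma$ itself has an immediate predecessor $\gamma'$ (so $\gamma=\gamma'+1$ and $\delta=\gamma'+2$) I would reproduce the module trick in the proof of Lemma~\ref{succ}: the enlargement $L_\infty(\Xb_{\gamma'})\cdot L$ is a $\Gamma$-invariant finite-dimensional $L_\infty(\Xb_{\gamma'})$-module inside $L_2(\Xb_{\gamma'+2})$, and the hypothesis of the lemma applied at $\beta'=\gamma'$ (that $\Xb_\gamma$ is the maximal compact extension of $\Xb_{\gamma'}$ within $\Xb_{\gamma'+2}$) forces $L_\infty(\Xb_{\gamma'})\cdot L\subset L_2(\Xb_\gamma)$ and hence $L\subset L_2(\Xb_\gamma)$; the inductive hypothesis $R(\gamma)$ then yields $L\subset L_2(\Xb_{\beta+1})$.

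The main obstacle will be the remaining subcase in which $\delta=\gamma+1$ with $\gamma$ itself a countable limit ordinal, since then $\gamma-1$ is unavailable and the module shortcut above cannot be invoked with an intermediate factor. Here I would decompose $L$ via the Peter--Weyl splitting $L_2(\Xb_{\gamma+1})=\bigoplus_\pi L_2(\Xb_\gamma)\otimes V_\pi^H$ coming from the compact skew product $\Xb_{\gamma+1}=\Xb_\gamma\times_\phi K/H$: the trivial-isotypic projection of $L$ already sits in $L_2(\Xb_\gamma)$ and is absorbed by the already-proved $R(\gamma)$, while each non-trivial isotypic projection must be shown to vanish. This last step is the delicate point, and calls on the hypothesis at level $\beta$ (that $\Xb_{\beta+1}$ is the maximal compact extension of $\Xb_\beta$ within $\Xb_{\beta+2}$) to rule out that a non-zero isotypic contribution could generate a compact extension of $\Xb_\beta$ strictly larger than $\Xb_{\beta+1}$.
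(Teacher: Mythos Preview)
Your approach parallels the paper's in broad strokes --- transfinite induction up the tower --- but differs in a structural choice that creates the difficulty you flag. The paper invokes Lemma~\ref{BF} at the outset, converting the claim into relative ergodicity of the extension $\Xb\underset{\Xb_\beta}{\times}\Xb\to\Xb_{\beta+1}\underset{\Xb_\beta}{\times}\Xb_{\beta+1}$ for each $\beta<\alpha$; since $\alpha$ is a limit ordinal this is an inverse limit of the corresponding extensions at the intermediate levels $\beta<\eta<\alpha$, and the paper then runs one uniform transfinite induction on $\eta$, invoking Lemma~\ref{succ} at successor stages and the stability of relative ergodicity under inverse limits at limit stages. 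You instead work directly with $T$-invariant finite-dimensional $L_\infty(\Xb_\beta)$-modules and induct on $\delta\le\alpha$. Your conditional-expectation argument for the limit case is correct and essentially equivalent to the inverse-limit fact, and your successor-of-a-successor case correctly reproduces the proof of Lemma~\ref{succ}.

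The gap is your successor-of-a-limit case $\delta=\gamma+1$ with $\gamma$ a limit, which you isolate but do not resolve. Your Peter--Weyl sketch supplies no mechanism forcing a non-trivial isotypic component of $L$ to vanish, and the hypothesis you invoke --- that $\Xb_{\beta+1}$ is the maximal compact extension of $\Xb_\beta$ within $\Xb_{\beta+2}$ --- concerns only the bottom three floors of the tower and carries no direct information about functions in $L_2(\Xb_{\gamma+1})\ominus L_2(\Xb_\gamma)$. Note moreover that the isotypic projections of a $T$-invariant $L_\infty(\Xb_\beta)$-module need not themselves be $T$-invariant (the cocycle $\phi$ mixes the $K$-types), so the decomposition you propose is not obviously compatible with the module structure you need. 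The paper's formulation via Lemma~\ref{BF} absorbs this subcase into the uniform scheme without singling it out; in your module language, closing the gap requires using the hypotheses at \emph{all} levels $\zeta$ with $\beta\le\zeta<\gamma$ simultaneously together with $R(\gamma)$, which is exactly what the relative-ergodicity reformulation packages cleanly.
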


\begin{proof}
In view of Lemma \ref{BF} what we have to show is that, for each $\beta < \al$, the extension
$\Xb \underset{\Xb_\beta} {\times} \Xb \to  \Xb_{\beta+1} \underset{\Xb_\beta}{ \times} \Xb_{\beta+1}$
is ergodic. 
Now this extension is an inverse limit of the extensions
$\Xb_\eta  \underset{\Xb_\beta} {\times} \Xb_ \eta  \to  
\Xb_{\beta+1} \underset{\Xb_\beta}{ \times} \Xb_{\beta+1}$,
where the ordinal $\eta$ ranges over the interval $\beta < \eta < \al$.
Now applying transfinite induction,
using Lemma \ref{succ} and the fact that an inverse limit
of ergodic extensions is an ergodic extension,
 we conclude the proof.
\end{proof}

\br

In view of the above three lemmas we conclude that in order to prove Theorem \ref{thm-main}
we only need to prove the following two statements:

\begin{thm}\label{rank-succ}
Let $\al$ be a countable successor ordinal.
Let $\Xb$ be an ergodic distal $\Z$-system of rank $\al$, 
where in the canonical tower the final extension
$\Xb = \Xb_\al  \to \Xb_{\al -1}$ is a compact extension
and let $G$ be a compact second countable group. 
Then, for a generic function $\phi :  X  \to G$, the corresponding 
skew-product $(X \times G, T_\phi)$ is an ergodic compact extension of $\Xb$ which is 
distal of rank $\al +1$.
\end{thm}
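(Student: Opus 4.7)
The plan is to split Theorem \ref{rank-succ} into two tasks. First, exhibit a dense $G_\delta$ set of $\phi\in L^0(X,G)$ (the Polish space of measurable maps $X\to G$ with the topology of convergence in measure) for which the skew product $T_\phi$ on $\mathbf{X}\times_\phi G$ is ergodic. Second, verify that for generic $\phi$ the canonical distal tower of $\mathbf{X}_\phi:=\mathbf{X}\times_\phi G$ is obtained from that of $\mathbf{X}$ by adjoining $\mathbf{X}_\phi$ on top, giving rank exactly $\alpha+1$.

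Writing $\mathbf{X}_{\alpha+1}:=\mathbf{X}_\phi$, the second task reduces, by Lemma \ref{BF}, to verifying that for every $\beta<\alpha$,
\[
\mathbf{X}_{\alpha+1}\underset{\mathbf{X}_\beta}{\times}\mathbf{X}_{\alpha+1}\ \text{is relatively ergodic over}\ \mathbf{X}_{\beta+1}\underset{\mathbf{X}_\beta}{\times}\mathbf{X}_{\beta+1}.
\]
By hypothesis the canonical tower of $\mathbf{X}$ already witnesses relative ergodicity of $\mathbf{X}\underset{\mathbf{X}_\beta}{\times}\mathbf{X}$ over $\mathbf{X}_{\beta+1}\underset{\mathbf{X}_\beta}{\times}\mathbf{X}_{\beta+1}$, so by transitivity of relative ergodicity it suffices to prove that $\mathbf{X}_{\alpha+1}\underset{\mathbf{X}_\beta}{\times}\mathbf{X}_{\alpha+1}$ is relatively ergodic over $\mathbf{X}\underset{\mathbf{X}_\beta}{\times}\mathbf{X}$. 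Using the group extension structure, this relative self-product is the $(G\times G)$-skew product over $\mathbf{X}\underset{\mathbf{X}_\beta}{\times}\mathbf{X}$ induced by the paired cocycle $(x,x')\mapsto(\phi(x),\phi(x'))$, so the condition becomes: the paired skew product is ergodic on each ergodic component of $\mathbf{X}\underset{\mathbf{X}_\beta}{\times}\mathbf{X}$.

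Each of these relative ergodicity requirements, as well as the ergodicity of $T_\phi$ itself, is a countable intersection of open conditions on $\phi$ (via a Halmos-type rephrasing through the Peter--Weyl decomposition of the relevant $L^2$ spaces), and hence cuts out a $G_\delta$ subset of $L^0(X,G)$. Density of the ergodic $T_\phi$'s is precisely the non-abelian Jones--Parry statement proved in Section \ref{sec-toy}; density for each of the relative conditions at levels $\beta<\alpha$ requires a relativized version of the same Baire-category / Rokhlin-tower construction, applied now to the paired cocycle $(\phi,\phi)$ over the relative self-product. Intersecting the countably many resulting dense $G_\delta$ sets yields a dense $G_\delta$ of $\phi$'s for which $\mathbf{X}_\phi$ is an ergodic compact extension of $\mathbf{X}$ with canonical distal tower of rank exactly $\alpha+1$.

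The main obstacle is this refined density for the paired cocycle: producing, by small perturbations of an arbitrary $\phi$, cocycles whose paired skew product is relatively ergodic over every relative self-product $\mathbf{X}\underset{\mathbf{X}_\beta}{\times}\mathbf{X}$ uniformly across $\beta<\alpha$. This is what Section \ref{sec-ranks} is designed to establish stage by stage, by extending the toy argument of Section \ref{sec-toy} (which handles only the absolute ergodicity of $T_\phi$) to the relativized setting demanded by the canonical tower conditions below $\mathbf{X}$.
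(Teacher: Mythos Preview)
Your proposal is correct and follows essentially the same route as the paper: apply Theorem \ref{max2} at each level $\beta<\alpha$ (with $\Zb=\Xb_\beta$, $\Yb=\Xb_{\beta+1}$) to obtain a dense $G_\delta$ set of $\phi$ for which $\Xb_{\beta+1}\to\Xb_\beta$ remains the maximal compact extension of $\Xb_\beta$ within $\Xb_\phi$, then intersect over the countably many $\beta$. Your intermediate factorization through $\Xb\underset{\Xb_\beta}{\times}\Xb$ and the reduction to ergodicity of the paired cocycle on its ergodic components is exactly how the proof of Theorem \ref{max2} proceeds internally (working componentwise over the parameter $k_0H\in K/H$), and your Peter--Weyl route to the $G_\delta$ structure is a legitimate alternative to the paper's essential-value criterion.
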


\begin{thm}\label{rank-limit}
Let $\al$ be a countable limit ordinal.
Let $\Xb$ be an ergodic distal $\Z$-system of rank $\al$, 
and let $G$ be a compact second countable group. 
Then, for a generic function $\phi :  X  \to G$, the corresponding 
skew-product $(X \times G, T_\phi)$ is an ergodic compact extension of $\Xb$ which is 
distal of rank $\al +1$.
\end{thm}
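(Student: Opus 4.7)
The plan is to follow the architecture of the successor case (Theorem \ref{rank-succ}), using the genericity framework of the toy case (Section \ref{sec-toy}) together with the maximality criterion of Lemma \ref{BF}. The limit case is structurally cleaner than the successor case: since $\al$ is a limit ordinal, $\Xb = \varinjlim_{\beta < \al} \Xb_\beta$ and there is no ``last'' compact extension inside $\Xb$ to be isolated from the new $G$-extension. Working in the Polish space $L^0(X, G)$ of measurable maps $X \to G$ under convergence in measure, the toy case yields a dense $G_\del$ set $\Ecal \subset L^0(X, G)$ of $\phi$ for which $\Xb_\phi := \Xb \times_\phi G$ is ergodic; such an $\Xb_\phi$ is a group extension of $\Xb$, hence distal of rank at least $\al + 1$.

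To promote this lower bound to equality one compares the canonical distal tower $(\tilde\Xb_\beta)$ of $\Xb_\phi$ with the canonical tower $(\Xb_\beta)_{\beta \leq \al}$ of $\Xb$. The limit stages of the comparison are automatic from the inverse-limit construction, and $\tilde\Xb_{\al+1} = \Xb_\phi$ is immediate because every intermediate factor $\Xb_\phi \to \Yb \to \Xb$ is of the form $\Xb \times_\phi G/H$ for a closed $H \leq G$, and is therefore compact over $\Xb$. So only the successor stages $\gamma + 1 < \al$ need attention: one must show that $\Xb_{\gamma + 1}$ remains the maximal compact extension of $\Xb_\gamma$ inside $\Xb_\phi$. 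By Lemma \ref{BF} this amounts to the relative ergodicity of
\begin{equation*}
\Xb_\phi \underset{\Xb_\gamma}{\times} \Xb_\phi \longrightarrow \Xb_{\gamma+1} \underset{\Xb_\gamma}{\times} \Xb_{\gamma+1}.
\end{equation*}
Since the lower segment $\Xb \underset{\Xb_\gamma}{\times} \Xb \to \Xb_{\gamma+1} \underset{\Xb_\gamma}{\times} \Xb_{\gamma+1}$ is already relatively ergodic by the rank hypothesis on $\Xb$ (Lemma \ref{BF} applied inside $\Xb$), it suffices to establish that $\Xb_\phi \underset{\Xb_\gamma}{\times} \Xb_\phi \to \Xb \underset{\Xb_\gamma}{\times} \Xb$ is relatively ergodic. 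This last extension is itself a skew-product: a $G \times G$-extension of the base $\Xb \underset{\Xb_\gamma}{\times} \Xb$ with cocycle the diagonal $(x_1, x_2) \mapsto (\phi(x_1), \phi(x_2))$.

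The principal obstacle I anticipate is showing that for each successor $\gamma + 1 < \al$ the set $\Rcal_\gamma \subset L^0(X, G)$ of $\phi$ realizing this relative ergodicity is a dense $G_\del$. The subtlety is that the $G \times G$-valued cocycle is constrained to be of the diagonal form $(\phi, \phi)$ rather than a freely chosen cocycle on the relative product base, so the toy case does not apply verbatim. The plan is to adapt the Baire-category perturbation argument from Section \ref{sec-toy} to this constrained setting: decompose $\Xb \underset{\Xb_\gamma}{\times} \Xb$ into ergodic components (a measurable operation since $\Xb \to \Xb_\gamma$ is a factor of an ergodic system) and, given any candidate non-trivial relatively invariant function for the $G \times G$-skew product over such a component, exhibit an arbitrarily small perturbation $\phi' = \phi \cdot \psi$ that destroys it, exploiting the $\Z$-ergodicity of $\Xb$ and the freedom in perturbing $\phi$ on fibers of $\Xb \to \Xb_\gamma$. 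Once this is established, since $\al$ is countable the intersection $\Ecal \cap \bigcap_{\gamma + 1 < \al} \Rcal_\gamma$ is a dense $G_\del$ subset of $L^0(X, G)$, and any $\phi$ in it yields an ergodic $\Xb_\phi$ of rank exactly $\al + 1$ with the required canonical distal tower structure.
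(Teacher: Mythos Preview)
Your strategic outline is essentially the paper's own reduction: use Lemma~\ref{BF} at each successor stage $\gamma+1<\al$ to reduce to the relative ergodicity of
\[
\Xb_\phi \underset{\Xb_\gamma}{\times} \Xb_\phi \longrightarrow \Xb_{\gamma+1} \underset{\Xb_\gamma}{\times} \Xb_{\gamma+1},
\]
then intersect the countably many resulting dense $G_\del$ sets. The paper packages this successor-stage statement as Theorem~\ref{max2} (with $\Zb=\Xb_\gamma$, $\Yb=\Xb_{\gamma+1}$), and your inductive bookkeeping for limit stages and for the final step $\tilde\Xb_{\al+1}=\Xb_\phi$ is fine.

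The gap is exactly where you flag it. Your description ``given any candidate non-trivial relatively invariant function \dots\ exhibit an arbitrarily small perturbation $\phi'=\phi\cdot\psi$ that destroys it'' does not, as stated, produce a $G_\del$ set, and it does not explain how a single perturbation of $\phi$ on $X$ can control the diagonal cocycle $(\phi(x_1),\phi(x_2))$ simultaneously on almost every ergodic component of $\Xb\underset{\Xb_\gamma}{\times}\Xb$. The paper does not attack invariant functions directly; it uses the \emph{essential values} criterion. For fixed $C$, $a$, $(g_1,g_2)$ and a tolerance $b$ on the exceptional set of components, it defines an explicit open set $U(C,a,b,c_a,g_1,g_2)\subset\Ccal(X,G)$ and proves density (Proposition~\ref{open-dense2}). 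Two mechanisms are crucial and are absent from your plan: (i) the ergodic components of $\Xb\underset{\Xb_\gamma}{\times}\Xb$ are parametrized by $k_0H\in K/H$ (where $\Xb_{\gamma+1}\to\Xb_\gamma$ is a $K/H$-extension), and for $k_0$ bounded away from $H$ the two coordinates $(z,kH,v_1)$ and $(z,kk_0H,v_2)$ lie in distinct atoms of a fine partition of $X$, so that \emph{independent} random reassignment of $\phi$ on those atoms makes the two coordinates of $\psi$ independent; (ii) since the purification of the Rokhlin tower with respect to $C$ depends on $k_0$, the element $\tau\in[T_X]_f$ cannot be chosen column-by-column as in the toy case --- the paper uses a \emph{random permutation} of tower levels (Lemma~\ref{randomp}) together with Fubini over $\Om\times K$ to find a single $\tau$ good for most $k_0$, and then a weak-dependence estimate (Lemma~\ref{simple}) to find a single random $\phi^\om$ good for most $k_0$. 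Without these ingredients the density of $\Rcal_\gamma$ is not established.

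In short: your architecture is correct and coincides with the paper's, but the ``adapt Section~\ref{sec-toy}'' step is the entire content of Theorem~\ref{max2}, and it requires the essential-values formulation, the $K/H$-parametrization of components, and the two randomization arguments. Your proposal should either invoke Theorem~\ref{max2} directly or supply these mechanisms.
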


\br

\section{A generic cocycle is ergodic}\label{sec-toy}
 
As a warm up let us first prove a simpler statement which generalises a theorem of Jones and Parry
\cite{JP}, where the authors deal with the case where the extending group $G$ is a compact
second countable abelian group.

\br

So for now let $\Yb = (Y, \mathcal{Y}, \nu,T)$ be an ergodic system, 
$G$ a compact second countable topological group, $\la_G$ its normalized Haar measure
and $d_G$ a bi-invariant metric on $G$.
Let $\mathcal{C}= \Ccal(Y,G)$  be the space of Borel maps $\phi : Y \to G$,
where we identify $\phi$ and $\psi$ if they agree $\nu$- a.e.
We equip $\mathcal{C}$ with a metric $d$ as follows:
$$
d(\phi, \psi) =
\inf \{\ep : \nu(\{y \in Y : d_G(\phi(y), \psi(y)) > \ep \}) < \ep\}.
$$

\begin{claim}
The function $d$ above defines a complete metric on the space $\Ccal(Y,G)$.
With the induced topology $\Ccal(Y,G)$ is second countable; i.e. it is a Polish space.
\end{claim}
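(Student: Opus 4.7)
My plan is to recognize $d$ as a variant of the Ky Fan metric for convergence in measure, and then transfer the standard facts about $L^0$-spaces of measurable maps into a Polish space. First I would verify the metric axioms. Symmetry is immediate. For non-degeneracy, if $d(\phi,\psi)=0$ then for every $n$ the set $A_n=\{y: d_G(\phi(y),\psi(y))>1/n\}$ has measure $<1/n$, so $\nu(\{y : \phi(y)\ne \psi(y)\})=\nu\bigl(\bigcup_n A_n\bigr)=0$, i.e.\ $\phi=\psi$ in $\Ccal(Y,G)$. For the triangle inequality, suppose $d(\phi,\psi)<\ep_1$ and $d(\psi,\chi)<\ep_2$, and choose $\ep_1'<\ep_1$, $\ep_2'<\ep_2$ witnessing these strict inequalities. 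On the complement of $\{d_G(\phi,\psi)>\ep_1'\}\cup\{d_G(\psi,\chi)>\ep_2'\}$, which has measure at least $1-\ep_1'-\ep_2'$, the triangle inequality in $G$ gives $d_G(\phi(y),\chi(y))\le \ep_1'+\ep_2'$. Hence $d(\phi,\chi)\le \ep_1+\ep_2$.

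Next I would prove completeness by the usual subsequence argument. Given a $d$-Cauchy sequence $\{\phi_n\}$, pick indices $n_1<n_2<\cdots$ with $d(\phi_{n_k},\phi_{n_{k+1}})<2^{-k}$. By the definition of $d$, the sets $B_k=\{y : d_G(\phi_{n_k}(y),\phi_{n_{k+1}}(y))>2^{-k}\}$ satisfy $\nu(B_k)<2^{-k}$, so by Borel--Cantelli $\nu(\limsup_k B_k)=0$. For $y\notin \limsup_k B_k$ the sequence $\{\phi_{n_k}(y)\}$ is Cauchy in the complete metric space $(G,d_G)$, hence converges, and we define $\phi(y)$ to be this limit (and arbitrarily elsewhere). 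The function $\phi$ is measurable as an a.e.\ pointwise limit of measurable maps into a second countable space, and a short estimate using the Cauchy property shows $d(\phi_n,\phi)\to 0$.

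For second countability I would exhibit a countable dense set. Fix a countable dense subset $\{g_i\}\subset G$ and, using that $(Y,\Ycal,\nu)$ is standard, fix a countable algebra $\Acal\subset\Ycal$ generating $\Ycal$ mod $\nu$. The collection $\Dcal$ of simple functions of the form $\sum_{i=1}^{N}g_i\,\ch_{A_i}$, with $N\in\N$, $g_i\in\{g_i\}$ and $\{A_i\}$ a partition of $Y$ drawn from $\Acal$, is countable. Any $\phi\in\Ccal(Y,G)$ can be approximated in $d$ by first choosing a partition of $G$ into small Borel pieces around the $g_i$, pulling back to a measurable partition of $Y$, then approximating each pull-back set in symmetric difference by a member of $\Acal$; making the diameters of the $G$-pieces and the symmetric-difference errors both at most $\ep$ yields an element of $\Dcal$ within $d$-distance $\ep$ of $\phi$.

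The only genuinely delicate point is the triangle inequality, where one must be careful to replace the strict inequalities in the definition of $d$ by witnesses $\ep_i'<\ep_i$ before combining them; the completeness and separability arguments are then the standard Ky Fan facts, using only that $(Y,\nu)$ is standard and that $(G,d_G)$ is a complete separable metric space.
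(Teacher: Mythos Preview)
Your proof is correct. The paper itself only verifies the triangle inequality (explicitly stating ``We only check that $d$ satisfies the triangle inequality'') and leaves completeness and second countability to the reader; your triangle-inequality argument is essentially identical to the paper's --- using witnesses $\ep_1',\ep_2'$ strictly below $\ep_1,\ep_2$ and the containment $\{d_G(\phi,\chi)>\ep_1'+\ep_2'\}\subset\{d_G(\phi,\psi)>\ep_1'\}\cup\{d_G(\psi,\chi)>\ep_2'\}$ --- while your Borel--Cantelli completeness argument and simple-function separability argument supply the details the paper omits.
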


\begin{proof}
We only check that $d$ satisfies the triangle inequality.
Let $d(\phi, \psi) = \ep_1, d(\psi,\rho) = \ep_2$.
By definition there are sequences $\ep_i \searrow \ep$, $\eta_i \searrow \eta$, such that
\begin{gather*}
\nu(\{ y : d_G(\phi(y), \psi(y)) > \ep_i \}) < \ep_i,\\
\nu(\{ y : d_G(\psi(y), \rho(y)) > \eta_i \}) < \eta_i.
\end{gather*}
Then
\begin{gather*}
\{ y : d_G(\phi(y), \rho(y)) > \ep_i + \eta_i \} \subset\\
\{ y : d_G(\phi(y), \psi(y)) > \ep_i \} + \{ y : d_G(\psi(y), \rho(y)) > \eta_i \},
\end{gather*}
hence
\begin{gather*}
\nu(\{ y : d_G(\phi(y), \rho(y)) > \ep_i + \eta_i \})  \leq \\
\nu(\{ y : d_G(\phi(y), \psi(y)) > \ep_i \}) + \nu(\{ y : d_G(\psi(y), \rho(y)) > \eta_i \}) <\\
 \ep_i + \eta_i.
\end{gather*}
Therefore
$$
d(\phi, \rho)  \leq \inf (\ep_i + \eta_i) = d(\phi,\psi) + d(\psi,\rho).
$$
\end{proof}

Recall that the {\em finite full group} of the system $\Yb$, denoted by $[T]_f$, is defined as the
group of invertible measure preserving transformations 
$\tau = \tau_{\Pcal,\sig}$ of the probability space $(Y, \Ycal, \nu)$
for which there is a finite measurable partition $\Pcal = \{P_1,\dots, P_n\}$ of $Y$
and a function 
$\sig : Y \to \Z$ so that for every $j$,  $\sig \rest P_j = s_j$ is a constant, and
$\tau \rest P_j = T^{s_j}$.

With $\phi \in \Ccal$ we associate the {\em skew product transformation}
$T_\phi : Y \times G \to Y \times G$ which is defined by
$$
T_\phi(y,g) = (Ty, \phi(y)g), \quad y \in Y, g \in G.
$$ 
We then see that
$$
T^n_\phi(y,g) = (T^ny, \phi_n(y)g), 
$$ 
where
$$
\phi_n(y)=\begin{cases}
\phi(T^{n-1} y)\cdots\al(Ty)\phi(y) & {\text{for}}\  n\ge 1\\
\id & {\text{for}}\  n=0\\
\phi(T^n y)^{-1}\cdots\phi(T^{-1} y)^{-1} & {\text{for}}\  n< 0.
\end{cases}
$$
For $\tau = \tau_{\Pcal,\sig} \in [T]_f$ we denote
$$
\phi_\tau(y) = \phi_{\sig(y)}(y), \quad  y \in Y.
$$

\br

\begin{thm}\label{generica}
For a generic $\phi \in \mathcal{C}$ the system 
$\Yb_\phi = (Y \times G, \mathcal{Y} \times \Bcal_G, \nu \times \la_G,T_\phi)$
is ergodic.
\end{thm}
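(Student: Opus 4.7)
The plan is to show that the set $\Ecal = \{\phi \in \Ccal : T_\phi \text{ is ergodic}\}$ is a dense $G_\del$ in the Polish space $(\Ccal, d)$, which establishes genericity in the sense of Baire category.

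First I would reduce the problem to a statement about each nontrivial irreducible unitary representation of $G$. By Peter-Weyl, the space $L_2(Y \times G, \nu \times \la_G)$ decomposes $T_\phi$-invariantly as a Hilbert sum of isotypic components indexed by the dual $\hat G$, which is countable because $G$ is second countable. The trivial representation contributes only constant functions, so $T_\phi$ is ergodic if and only if, for every nontrivial $\pi \in \hat G$, there is no nonzero measurable $f : Y \to V_\pi$ satisfying $f(Ty) = \pi(\phi(y)) f(y)$ $\nu$-a.e. Denote the set of such $\phi$ by $E_\pi$. It suffices to show that each $E_\pi$ is a dense $G_\del$: then $\Ecal = \bigcap_{\pi \in \hat G \setminus \{1\}} E_\pi$ is a dense $G_\del$ by the Baire category theorem.

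To see that $E_\pi$ is a $G_\del$, apply von Neumann's mean ergodic theorem to the twisted shift $U_\phi$ on $L_2(Y; V_\pi)$ defined by $(U_\phi f)(y) = \pi(\phi(T^{-1}y))^{-1} f(T^{-1}y)$. The absence of nonzero $U_\phi$-invariant vectors is equivalent to the strong operator convergence $\tfrac{1}{N}\sum_{n=0}^{N-1} U_\phi^n \to 0$. Expressing this as a countable intersection of open conditions evaluated against a dense sequence of test vectors, together with the continuity of $\phi \mapsto U_\phi$ in the strong operator topology (which follows because $d$ is convergence in measure and $\pi$ is bounded and continuous), realises $E_\pi$ as a $G_\del$.

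The main step is density. Given $\phi \in \Ccal$ and $\ep > 0$, I would produce $\phi' \in \Ccal$ with $d(\phi', \phi) < \ep$ that lies in a prescribed open approximation of $E_\pi$. Using ergodicity of $T$, choose a Rokhlin tower with base $B$ and height $n > 2/\ep$ so that $B, TB, \dots, T^{n-1}B$ are pairwise disjoint and cover $Y$ up to a set of measure $< \ep/2$. Choose a measurable $\psi : B \to G$ whose pushforward of $\nu \rest B / \nu(B)$ equals Haar measure on $G$, and modify $\phi$ only on the top floor $T^{n-1}B$ by setting, for $y \in B$,
\[
\phi'(T^{n-1}y) = \psi(y) \cdot \bigl(\phi(T^{n-2}y) \cdots \phi(Ty) \phi(y)\bigr)^{-1},
\]
so that the tower cocycle satisfies $\phi'_n(y) = \psi(y)$ for $y \in B$. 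Since the redefinition is supported on a set of measure $\nu(T^{n-1}B) < \ep/2$, we have $d(\phi', \phi) < \ep$.

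The main obstacle is not producing the perturbation, but verifying that the single modified cocycle $\phi'$ simultaneously destroys every equivariant section, or more practically, lies in a specified open layer of $E_\pi$. Rather than arguing section by section, I would verify the averaging characterization from the $G_\del$ step directly: using Haar-equidistribution of $\phi'_n \rest B = \psi$ together with matrix coefficient estimates for the nontrivial irreducible representation $\pi$ (the noncommutative replacement for the abelian Fourier computation of Jones-Parry), one shows that the norm of $\tfrac{1}{N}\sum_{n=0}^{N-1} U_{\phi'}^n$ on any prescribed finite-dimensional subspace of $L_2(Y; V_\pi)$ is as small as we like when $n$ and $N$ are taken large enough. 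Iterating this construction over a refining sequence of Rokhlin towers, landing at each stage in a finer open neighbourhood of $E_\pi$, yields a dense $G_\del$ contained in $E_\pi$, which completes the proof.
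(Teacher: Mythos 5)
Your proposal takes a genuinely different route from the paper. The paper argues directly with essential values of the cocycle in the sense of Aaronson--Weiss: for a fixed positive set $C\subset Y$ and target $g\in G$ it builds, from a Rokhlin tower and a finite $\tfrac{a}{10}$-net $F$ of $G$ placed randomly on the central level, a full-group element $\tau\in[T]_f$ that returns a definite fraction of $C$ to $C$ with cocycle value near $g$; intersecting over countably many $C$'s, $g$'s, and scales yields a residual set of $\phi$ for which every $g\in G$ is an essential value, hence $T_\phi$ is ergodic. You instead invoke Peter--Weyl to reduce to showing that for each nontrivial $\pi\in\widehat G$ the set $E_\pi$ of $\phi$ admitting no nonzero $\pi$-equivariant section is a dense $G_\delta$, and you establish the $G_\delta$ part via the mean ergodic theorem. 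That reduction and the $G_\delta$ step are correct and constitute a legitimate alternative skeleton.

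The density step, however, contains a genuine gap. After modifying $\phi_0$ only on the top floor so that the first-return cocycle on the base $B$ equals a prescribed $\psi:B\to G$ with $\psi_*\bigl(\nu|_B/\nu(B)\bigr)=\lambda_G$, you assert that Haar-equidistribution of $\psi$ together with ``matrix coefficient estimates'' forces $\bigl\|\tfrac1N\sum_{m<N}U_{\phi'}^m\bigr\|$ to be small on a prescribed finite-dimensional subspace. That does not follow from the distribution of $\psi$ alone: the relevant averages involve products $\pi(\psi(T_B^{k-1}y))\cdots\pi(\psi(y))$ across successive passes through the tower, and the vanishing of $\int_G\pi\,d\lambda_G$ gives no control over these joint terms without some independence. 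Indeed, $\psi$ can be a $T_B$-coboundary and still push $\nu|_B$ forward to Haar measure (for $G=\mathbb{T}$ take $\psi=h\circ T_B-h$ with $h$ Lebesgue-distributed and $h$, $h\circ T_B$ independent), and then the induced cocycle has a large invariant subspace whose Ces\`aro averages do not shrink, so $\phi'$ cannot be made to land in an arbitrarily small open layer of $E_\pi$ by this device. Some built-in randomness or independence in the perturbation is indispensable here --- this is exactly what the paper supplies by drawing independent, uniformly distributed values from the finite net $F$ on disjoint atoms of the central level --- and your sketch neither provides it nor identifies a substitute.
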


\begin{proof}
Fix a subset $C \subset Y, \ \nu(C) >0$, an element $g$ in $G$, a positive small
constant $a$, and a positive constant $c_a$ (which will depend on $a$). 
Define the set 
$$
U(C, a, c_a, g) \subset \mathcal{C}
$$
as the collection of all the functions 
$\phi \in \mathcal{C}$
with the following property:

\br 

There exists an element $\tau \in [T]_f$ 
such that :
\begin{enumerate}
\item
$ \tau (C) =  C$,
 \item
$$
\nu(\{y \in C : d(\phi_{\tau}(y) , g) < a\}) > c _a \nu(C)
$$
\end{enumerate}

\br

\begin{prop}\label{open-densea}
For a sufficiently small $c_a$ the set $U = U(C,a, c_a, g)$ is open and dense in $\mathcal{C}$.
\end{prop}

\begin{proof}
Fix $C$ and $a$.
It is easy to check that $U$ is an open set. 
In order to see that it is dense
(for a sufficiently small $c_a$)  fix $\phi_0 \in \mathcal{C}$ and $\del >0$ (to be determined later on). 
As finite valued functions are dense in $\mathcal{C}$, we may and will assume that $\phi_0(Y)$
is a finite subset of $G$.

Let $B_0 \subset Y$ be a base for a Rokhlin tower in $Y$ of height $2N +1$, 
so that 
$\nu(\bigcup_{i =0}^{2N} T^i B_0) \allowbreak  > 1 - \del$ (again, $N$ will be determined later on). 
Next purify the $Y$-tower according to $\phi_0$ and $C$; i.e. subdivide $B_0$
into a finite number of subsets $\{B_j\}_{j =1}^J$ so that for each $j$, 
on each level of the column of the tower above 
$B_j$, the function $\phi_0$ is constant, and each level 
is either contained in $C$ or in $Y \setminus C$.

By the ergodic theorem, for sufficiently large $N$, 
but for a set of measure $< \del$, in the remaining
columns of the tower, there is (almost) an
equal proportion of $C$-levels in the bottom half and top half of the tower (up to $\del$).
In each of the good columns enumerate the $C$-levels, from the bottom up to level $N-1$
and from level $N$ to the top. 
Both in the top half and in the bottom half of the tower there are at least 
$(1 - \del)N \nu(C)$ levels contained in $C$ 
($C$-levels). The transformation $\tau$ is defined by interchanging pairs of $C$-levels in the lower and upper
half of the tower, using the appropriate power of $T$. It is defined to be the identity elsewhere.

When we focus on a particular $C$-level, say $i_0$ in the lower half,
that maps to a $C$-level $i'_0$ in the upper half of the column,
the value of the cocycle  corresponding to $\phi_0$, as we move up the tower to level $N-1$, 
and the value of the cocycle as we move from level $N+1$ up to the level $i'_0$, define a pair
$(h_1, h_2) \in G \times G$.
As $\phi_0$ has only finitely many values the collection $H$ of all pairs $(h, h') \in G \times G$ 
so obtained is finite. 
Thus $\tau$ moves from the bottom to the top at least a $(\frac12 - 10 \del)$-fraction of $C$.

\br

Let $V$ be $\frac {a}{10}$-ball around $e$.
Let $F = \{f_1, \dots, f_m\} \subset G$ be a finite set with $G = \bigcup_{i=1}^m f_iV$.
As our metric on $G$ is bi-invariant, for any pair 
$(h,h') \in H$ there is an element $f \in F$ with $hfh' \in V$.
Note that $m = m(a)$ depends only on $a$.
Now divide each central level $T^N B_j$ into $m$ sets $\{D^j_1, \dots, D^j_m\}$ of equal measure, 
and change $\phi_0$ to $\phi$ by defining $\phi \rest D^j_l = f_l,  \ 1 \leq l \leq m$. 
For the pair $(h, h')$ in the previous paragraph there is at least one value of $l$ such that
$$
d(hf_l h', g) < a
$$
and thus, for at least a $\frac1m$-fraction of the $i_0$ level we see that $d(\phi_\tau(y),g) < a$.
Thus with $c_a = \frac1m (\frac12 - 10 \del)$ we have
$$
\nu(\{y \in C : d(\phi_{\tau}(y) , g) < a\}) > c _a \nu(C).
$$
\end{proof}

\br

In order to finish the prof of Theorem \ref{generica}
we need to show that for a $\phi$ in a dense $G_\del$ subset of $\Ccal$ the corresponding system
$\Yb_\phi = (Y \times G, \mathcal{Y} \times \Bcal_G, \nu \times \la_G,T_\phi)$ 
is ergodic.

Given $C, a$ and $g$ as above, we define the set $U= U(C,a, c_a, g) \subset \mathcal{C}$ 
and by Proposition \ref{open-densea} we know that, with a suitable constant $c_a$, 
it is open and dense in $\Ccal$.

Let  $\{C_n\}_{n \in \N}$ be a dense collection in the measure algebra $(\Ycal, \nu)$ and set
$$
\Ucal(a, c_a, g) = \bigcap_{n \in \N}  U(C_n, a, c_a,g).
$$
Note that it is here that we use the uniform bound 
$\nu(\{y \in C : d(\phi_{\tau}(y) , g) < a\}) > c _a \nu(C)$, for now we have that 
$\phi \in \Ucal(a, c_a, g)$ implies $\phi \in U(C, a, c_a, g)$ for every 
positive $C \in \Ycal$.

Next set
$$
\Ucal(g) = \bigcap_{n \in \N} \Ucal(\frac{1}{n}, c_{\frac{1}{n}}, g),
$$
and finally let 
$$
\Ccal_0  = \bigcap \{\Ucal(g) : g  \in G_0\},
$$
with $G_0 \subset G$ a countable dense subset of $G$.

We now see that every $g \in G$ is an essential value for each $\phi$ 
in the dense $G_\del$ subset  $\Ccal_0$ of $\Ccal$. This is a sufficient (and necessary) condition for $T_\phi$
to be ergodic (see e.g. \cite[Page 1287]{AW}) and the proof of Theorem \ref{generica} is complete. 
\end{proof}

\br

\section{The proofs of  Theorems \ref{rank-succ} and  \ref{rank-limit}}\label{sec-ranks}

We deal first with Theorem \ref{rank-limit}.
Thus, our system $\Xb$ is assumed to be ergodic and distal of order $\al$, with $\al$ a limit ordinal.
As the distal tower for $\Xb$ is canonical, for each $\beta < \al$,
the extension $\Xb_{\beta + 1} \to \Xb_\beta$ is the maximal compact extension of $\Xb_\beta$ in $\Xb$.
In order to show that, for a generic $\phi : X \to  G$, the system 
 $(X \times G, T_\phi)$, denoted by $\Xb_\phi$, is ergodic of rank $\al +1$,
what we have to show,
in view of Lemma \ref{IL}, is that for each ordinal $\beta < \al$, 
the extension
$$
\Xb_\phi \underset {\Xb_\beta} {\times} \Xb_\phi \to \Xb_{\beta +1} \underset {\Xb_\beta} {\times} \Xb_{\beta + 1}
$$
is relatively ergodic.
This situation is isolated in the following setup, where we use the notation
$\Zb = \Xb_\beta$ and $\Yb = \Xb_{\beta +1}$.

\begin{setup}\label{setup-limit}
Let $K$ and $G$ be two arbitrary second countable compact topological groups
with normalized Haar measures $\la_K$ and $\la_G$ respectively.
Let $H \leq K$ be a closed subgroup, with its Haar measure $\la_H$.
We write $\la_{K/H}$ for the normalized Haar measure on the homogeneous space $K/H$.
 Suppose that $\Zb = (Z, \mathcal{Z}, \theta, T_Z)$ is an ergodic $\Z$-system 
and that $\Yb =(Y,\Ycal, \nu, T_Y)$ is the skew product 
$$
\Yb\cong \Zb \times_\ga (K/H,\la_{K/H})
=(Z\times K/H,\Zcal\otimes\Bcal_{K/H},\theta\times\la_{K/H},T),
$$
where $Y = Z \times K/H$, $\nu = \theta \times \la_{K/H}$ and 
$T_Y = (T_Z)_\ga$ for a cocycle $\ga : Z \to K$ which is minimal with $K_\ga= K$, 
as in Theorem \ref{homog},
and such that the system $\Yb$ is ergodic.


\br

We recall that the corresponding group extension $\hat{\pi} : \hat{\Yb} \to \Zb$,
with $\hat{Y} = Z \times K$, is ergodic and the diagram
\begin{equation*}
\xymatrix
{
\hat \Yb =\Zb  \times_\ga K
 \ar[d]_{\hat\pi} \ar[dr]^{\sig}  & \\
\Zb &  \Yb=\Yb  \times_\ga  K/H \ar[l]^-{\pi}
}
\end{equation*}
commutes, 
where $\hat\Yb = \Zb  \times_\ga (K, \la_K)$
is the group skew-product defined by the cocycle $\ga$,\
with $\hat\nu=\theta\times \la_K$ 
and $\hat \nu$ is ergodic. The map $\sig : \hat{\Yb} \to \Yb$ 
is the quotient map $\sig(y,k) = (y, kH)$.

\br

Suppose further that $\Xb = (X, \Xcal, \mu, T_X)$ is an ergodic system,
$$
\Xb = (X, \Xcal, \mu, T_X) \to \Yb = (Y, \mathcal{Y}, \nu,T_Y)
$$ 
a factor map, such that 
in the diagram $\Xb \to \Yb \to \Zb$, the map
$\Yb \to \Zb$ is the maximal compact extension of $\Zb$ in $\Xb$.
 By Rokhlin's theorem \ref{rsp} we can represent $X$ as $Y \times V$ where $(V, \rho)$ is 
 a probability space, $\mu = \nu \times \rho$ and the transformation $T_X : X \to X$ is given by a Rokhlin cocycle
 $S$ from $Y$ to the Polish group $MPT(V, \rho)$ of invertible measure preserving transformations
 of $(V, \rho)$, with
 $$
T_X(y,v) = (T_Yy, S_y v), \quad y \in Y, v \in V.
 $$

We let let $\mathcal{C} = \Ccal(X,G)$ 
be the Polish space of Borel maps $\phi : X \to G$.
For $\phi \in \mathcal{C}$ set $T_\phi : X \times G \to X \times G$ as :
$$
T_\phi(x, g) = (T_Xx, \phi(x)g), \quad (x \in X, g \in G).
$$
We let $\Xb_\phi =(X \times G, \Xcal \times \Bcal_G, \mu \times  \la_G)$ 
be the corresponding skew product system.

\br

We need to show that, for a generic $\phi \in \Ccal$ the system 
$\Xb_\phi \underset{\Zb}{\times} \Xb_\phi$ is a relatively ergodic extension of $\Yb \underset{\Zb}{\times} \Yb$.
The ergodic components of $\Yb \underset{\Zb}{\times} \Yb$ are parametrized by $k_0H  \in K/H$
and may be identified with $\Zb \times H/H_{k_0}$,
with $H_{k_0} =k_0 H k_0^{-1} \cap H$, as follows.

\br 

The ergodic components of the system $\hat{\Yb} \underset{\Zb}{\times} \hat{\Yb}$
are parametrized by $k_0 \in K$ and have the form $\{(z, k, kk_0) : z \in Z, \ k \in K\}$.
Under $\sig$ these are mapped onto the sets
$$
\{(z,kH, kk_0H) :  z \in Z, \ k \in K\}.
$$

Now for $k_1, k_2 \in K$ we have 
$(z,k_1H, k_1k_0H) = (z,k_2H, k_2k_0H)$ iff
$$
k_1^{-1}k_2 \in H \cap  k_0H k_0^{-1} = H_{k_0},
$$ 
and, in particular, $(z,kH, k k_0H) = (z, H,k_0H)$ iff $k \in H_{k_0}$.
Thus $H_{k_0}$ is the stability group of the point $(H, k_0H)$  under the left action by $K$.
It therefore follows that the correspondence
\begin{equation}\label{Hk0}
(z,kH, kk_0H)  \longleftrightarrow (z, kH_{k_0})
\end{equation}
is an isomorphism, with the diagonal action:
$$
(T_Y \times T_Y)(z,kH, kk_0H) = (T_Zz, \ga(z)kH, \ga(z) kk_0H) 
 \longleftrightarrow (T_Zz, \ga(z) kH_{k_0}).
$$

{\bf In the sequel, in order to simplify our notations,
we will often write $T$ instead of $T_X$, $T_Y$ and $T_Z$,
when the meaning is clear from the context. Whenever there is room for confusion
we will revert to $T_X$ etc.}

\br

Our assumption on the diagram 
$\Xb \to \Yb \to \Zb$ implies that 
$\Xb \underset{\Zb}{\times} \Xb$ is a relatively ergodic extension of 
$\Yb \underset{\Zb}{\times} \Yb$, and thus for a fixed $k_0$, 
the diagonal transformation $T_X \times T_X$  on $X \underset{Z}{\times}X$
\begin{equation}\label{Tk0}
T_{k_0}(z, kH,  kk_0H, v_1, v_2) =
(Tz, \ga(z) kH, \ga(z) kk_0H, S_{(z,kH)} v_1, S_{(z, kk_0H)}v_2),
\end{equation}
is ergodic.
We note that all these sets (the supports of the ergodic components) are subsets of the space
\begin{equation*}\label{space}
Z \times K/H \times K/H \times V \times V.
\end{equation*}

\br
Alternatively, under the correspondence (\ref{Hk0})
\begin{equation*}\label{Tk}
T_{k_0} (z, kH_{k_0}, v_1, v_2) = (Tz,  \ga(z)kH_{k_0} , S_{(z,kH)} v_1, S_{(z, kk_0H)}v_2).
\end{equation*}
is ergodic.
Denoting $L_{k_0} = K/H_{k_0}$, we note that this identifies the ergodic component associated with
$k_0$ with $Z \times L_{k_0} \times V \times V$. 
We write $\mu_{k_0}$ for the product measure
$\theta \times \la_{L_{k_0}} \times \rho \times \rho$ on this space.

We will use the same letter $\mu_{k_0}$ for the corresponding measure on the space 
$X \underset{Z}{\times}X$.
Thus in this notation we have
$$
\theta \times \la_{K/H} \times \la_{K/H}  \times \rho \times \rho =
\int_{K/H} \mu_k \, d \la_{K/H}(k).
$$
{\bf Note that with this representation the transformations $T_{k_0}$, as in (\ref{Tk0}), 
are all one and the same as the diagonal action of $T= T_X$ on $X\underset{Z}{\times}X$,
and we only change the measure, so that $T_k = T_X \times T_X$ is ergodic with respect to $\mu_k$.}

\br

What we need to show is that for a generic cocycle $\phi : X \to G$, the cocycle $\psi = \psi_{k_0}$, 
defined from 
$Z \times K/H \times K/H \times V \times V$ to $G \times G$ by
\begin{equation}\label{psi}
\psi(z, kH,  kk_0H, v_1,v_2) = (\phi(z, kH,v_1), \phi(z, kk_0H,v_2)),
\end{equation}
is ergodic with respect to $\mu_{k_0}$ for  $\la_{K/H}$ a.e. $k_0H \in K/H$.
\end{setup} 

%
%

\br

\begin{rmk} \label{finite}
Suppose $\Yb \to \Zb$ is a $K$ extension with $K$ a finite group.
Then as we noted in the above discussion, 
the ergodic components of the system $\Yb \underset{\Zb}{\times} \Yb$
are parametrized by $k_0 \in K$ and have the form $\{(z, k, kk_0) : z \in Z, \ k \in K\}$.
It then follows that the ergodic component
which corresponds to $k_0 =e$, i.e. the component $W = \{(z, k, k) : z \in Z, \ k \in K\}$,
with diagonal action $T(z, k, k) = (Tz, \ga(z) k, \ga(z)k)$
has positive measure in the system $\Yb \underset{\Zb}{\times} \Yb$.
But then, no matter which $\phi \in \Ccal(Y,G)$ we choose, the extension $\Yb_\phi \to \Yb$
can not be such that 
$$
 \Yb_\phi \underset{\Zb} { \times} \Yb_\phi  \to \Yb \underset{\Zb} { \times} \Yb.
$$
is ergodic since that, in particular, means that the ergodic component
in the system  $\Yb_\phi \underset{\Zb} { \times} \Yb_\phi$ which sits above $W$
with diagonal action
\begin{align*}
T_\phi(z, k, k, g_1, g_2) & = (Tz, \ga(z) k, \ga(z)k, \phi(z,k) g_1, \phi(z,k) g_2)\\
&  = (Tz, \ga(z) k, \ga(z)k, \phi(z,k), \phi(z,k) g_2( g_1)^{-1})g_1,
\end{align*}
is $G \times G$ saturated, which is impossible.

This simple observation shows why in our Theorem \ref{thm-main},
as well as the other main results in this paper, we have to require that the extending compact
groups (or homogeneous quotient) be infinite.
 \end{rmk}

\br

\begin{rmk} \label{rmk2}
The next theorem is the main tool in the proof of Theorem \ref{rank-limit}.
However, we note that whereas in Theorem \ref{rank-limit} we assume that
 the system $\Xb$ is distal, here we only need the system $\Zb$ to  be ergodic and infinite. 
 \end{rmk}

\begin{thm}\label{max2}
We are given a chain of factors $\Xb \to \Yb \to \Zb$ of the ergodic system $\Xb$ such that
$\Zb$ is infinite, $\Yb \to \Zb$ is a $K/H$-extension, and
$\Yb \to \Zb$ is the maximal compact extension of $\Zb$ in $\Xb$.
Then, for a generic $\phi \in \mathcal{C}(X,G)$ 
the system $\Xb_\phi = (X \times G, \mathcal{X} \times \Bcal_G, \mu \times \la_G, T_\phi)$,
is ergodic and the extension, $\Yb \to \Zb$, 
is the maximal compact extension of $\Zb$ in $\Xb_\phi$.
\end{thm}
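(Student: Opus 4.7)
The theorem asserts two generic properties of $\phi$: (i) ergodicity of $\Xb_\phi$ and (ii) preservation of the maximal compact extension $\Yb \to \Zb$ inside $\Xb_\phi$. Property (i) follows directly from Theorem \ref{generica} applied with $\Xb$ in the role of the base, so the real content is (ii). By Lemma \ref{BF}, (ii) is equivalent to relative ergodicity of $\Xb_\phi \underset{\Zb}{\times} \Xb_\phi \to \Yb \underset{\Zb}{\times} \Yb$, which in the parametrisation of Setup \ref{setup-limit} translates to the statement that for $\la_{K/H}$-a.e.\ $k_0 H$ the $G \times G$-valued cocycle $\psi_{k_0}$ of \eqref{psi} is ergodic over $(T_{k_0}, \mu_{k_0})$. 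The maximality of $\Yb$ inside $\Xb$ gives the ergodicity of $T_{k_0}$ on $(X \underset{Z}{\times} X, \mu_{k_0})$ for a.e.\ $k_0$, so the task reduces to showing, for a generic $\phi$, that every $(g_1, g_2) \in G \times G$ is an essential value of $\psi_{k_0}$ for $\la_{K/H}$-a.e.\ $k_0$.

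I would mimic the approach of Theorem \ref{generica}. For each positive-measure $C \subset X \underset{Z}{\times} X$ drawn from a countable dense family, each $(g_1, g_2)$ in a countable dense subset of $G \times G$, and each small $a, c_a > 0$, let $U = U(C, a, c_a, (g_1, g_2))$ be the set of $\phi \in \Ccal(X,G)$ for which there is some $\tau$ in the finite full group of the diagonal action $T_X \times T_X$ on $X \underset{Z}{\times} X$ with $\tau(C) = C$ and
\[
\mu\bigl(\{x \in C : d_{G \times G}(\psi_\tau(x), (g_1, g_2)) < a\}\bigr) > c_a \, \mu(C).
\]
Openness is routine. For density, I would approximate $\phi \in \Ccal$ by a finite-valued $\phi_0$, use that $\Zb$ is infinite to build in $Z$ a Rokhlin tower of height $2N+1$ and mass $> 1-\del$, lift it through the fibration $X \underset{Z}{\times} X \to Z$, purify so that $\phi_0$ is constant on each column and $C$ is a union of levels, and define $\tau$ by pairing roughly half of the $C$-levels in the lower half of the tower with $C$-levels in the upper half via the appropriate power of $T_X \times T_X$.

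The novel feature, absent from the proof of Theorem \ref{generica}, is that $\psi$ reads $\phi$ along a pair of orbits whose $Y$-coordinates $(z, kH)$ and $(z, kk_0H)$ are distinct for $\la_{K/H}$-a.e.\ $k_0$, because $H$ has infinite index in $K$. Subdividing the central level $T_X^N(B \times K/H \times V)$ along a sufficiently fine partition of the $K/H$-coordinate and reassigning $\phi_0$ on each cell to values from a finite $a/10$-net $F \subset G$ lets one adjust the two entries of $\psi_\tau$ independently: for the finitely many pairs $(h,h') \in G \times G$ that appear as cocycle products above and below the central level, one can always find a cell on which both entries of $\psi_\tau$ land within $a$ of $(g_1, g_2)$. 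Crucially, because $\tau$ is built from the Rokhlin tower downstairs in $Z$, its action decomposes over every ergodic fibre $(X \underset{Z}{\times} X, \mu_{k_0})$ with the same combinatorial data, so the resulting constant $c_a$ is uniform in $k_0$; this is what allows us to pass from a single statement about $\mu$ to the a.e.-statement for the $\mu_{k_0}$'s.

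I expect the main obstacle to be exactly this $k_0$-uniformity at the cell-subdivision step, since the map $(z, kH) \mapsto (z, kk_0H)$ twists the $K/H$-coordinate by $k_0$ and one must arrange a single central-level partition that witnesses the independent-control argument for a.e.\ $k_0$ simultaneously. Once the open dense sets $U$ are constructed, intersecting them over a countable dense family of $C$'s, a countable dense subset of $G \times G$, and $a = 1/m$ with suitable $c_a$ gives a dense $G_\del$ of $\phi$'s for which every element of $G \times G$ is an essential value of $\psi_{k_0}$ for a.e.\ $k_0$. By the standard essential-values criterion used at the end of the proof of Theorem \ref{generica}, this yields ergodicity of each $\psi_{k_0}$-skew product, hence the desired relative ergodicity, completing the proof together with Theorem \ref{generica}.
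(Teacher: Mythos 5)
Your proposal correctly identifies the framework (Lemma~\ref{BF}, essential values of $\psi_{k_0}$, open--dense sets $U$, a Rokhlin tower built over $Z$, modification of $\phi_0$ on a central level via a finite net $F$), and you even correctly locate the crux --- the $k_0$-uniformity. But you do not resolve it, and the deterministic constructions you sketch cannot resolve it; this is a genuine gap, and it is exactly what occupies most of the paper's proof of Proposition~\ref{open-dense2}.

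The first problem is the construction of $\tau$. You propose to ``pair roughly half of the $C$-levels in the lower half of the tower with $C$-levels in the upper half.'' Such a pairing necessarily depends on which levels of a pure column lie in $C$, and since $C$ lives in $X \underset{Z}{\times} X$ this set of levels depends on the $(K/H\times K/H\times V\times V)$-coordinate, hence on $k_0$. So a $C$-dependent deterministic pairing is not $Z$-measurable; conversely a $Z$-measurable $\tau$ cannot see $C$ and cannot be designed to match $C$-levels. If instead you take $\tau$ in the full group of the diagonal $T_X\times T_X$ on $X \underset{Z}{\times} X$ without the $Z$-measurability constraint, you lose the simultaneous interpretation of $\tau$ over the different ergodic fibres $\mu_{k_0}$ (and the two entries of $\psi_\tau$ would use different time lengths). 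The paper escapes this dilemma by taking a \emph{random} permutation $\tau^\om$ (with $\Om=\Sym(N)$) of the bottom $N$ levels onto the top $N$ levels --- an assignment that is blind to $C$ and depends only on the level index, hence is $Z$-measurable --- and invokes Lemma~\ref{randomp} to show it carries a $\tfrac12\ga_{k_0}^2$-fraction of $C$-levels to $C$-levels with high probability in each pure column; a Fubini argument over $\Om\times K$ then produces a single $\om$ good for a set of $k_0$'s of $\la_K$-measure $>1-b$. This also explains why the paper's $U$-set carries the extra parameter $b$ and is phrased in terms of the fibre measures $\mu_{k_0}$, whereas your $U$-set uses a single ambient measure $\mu$, which cannot yield the a.e.-in-$k_0$ conclusion required for relative ergodicity.

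The second problem is at the cell-subdivision step. You assign values from the net $F$ to cells deterministically, noting correctly that because $H$ has infinite index the two $K/H$-coordinates $(kH, kk_0H)$ are typically distinct, but then asserting without argument that ``the resulting constant $c_a$ is uniform in $k_0$.'' This is exactly the point at which the paper introduces a second layer of randomness: $\phi^\om$ is chosen by assigning i.i.d.\ uniform $F$-valued variables $\xi^\om_{P\times Q\times R}$ to cells, and the independence of the two entries of $\psi^\om_\tau$ is secured not merely by $kH\ne kk_0H$ but by the quantitative $\eta$-separation device ($\tilde K$, the fineness of $\Qcal_l$) guaranteeing distinct cells; then Lemma~\ref{simple} handles the weakly dependent weighted sum, and a second Fubini over $\Om\times K$ selects a single $\om$ good for most $k_0$. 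Without these two randomisations and the accompanying lemmas there is no argument that a single $(\tau,\phi)$ works for $\la_{K/H}$-a.e.\ $k_0$, so the proposal as written does not establish the density of $U$.
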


\begin{proof}

\br

Fix a subset $C \subset Z \times K/H \times K/H \times V \times V$
of positive measure,  elements $g_1, g_2$ in $G$, positive small
constants $a, b$, and a positive constant $c_a$ (which will depend on $a$). 
Define the set 
$$
U = U(C,a,b, c_a, g_1, g_2) \subset \Ccal(X,G),
$$
to be the set of cocycles 
$\phi : Z \times K/H \times V \to G$ such that for a set $K_0 \subset K$ with $\la_K(K_0) > 1 - b$,
and all $k_0 \in K_0$,
there is an element $\tau \in [T_X]_f$ such that
\begin{align}\label{ess1}
\begin{split}
\mu_{k_0} (\{(z, kH,  kk_0H, v_1,v_2) \in C : & \  d(\phi_\tau(z, kH,v_1) , g_1) < a \quad   \& \  \\  
& 
d(\phi_\tau(z, kk_0H,v_2) , g_2) < a\})  > c_a  \mu_{k_0}(C)^2.
\end{split}
\end{align}

\br

\begin{prop}\label{open-dense2}
For a sufficiently small $c_a$ the set $U(C,a, b, c_a, g_1, g_2)$ is open and dense in $\mathcal{C}(X, G)$.
\end{prop}

\begin{proof}
%
%
%
%
Fix $C, a, b$ and $g_1, g_2$ and consider the corresponding set $U = U(C,a, b, c_a, g_1, g_2)$.
%


In order to see that $U$ is open, note that all of the measure inequalities are strict
and for a fixed $\phi_0 \in U$ the corresponding element $\tau_0 \in [T_X]_f$ involves only finite
number of products of $\phi_0$.
It is therefore clear that for $\del$ sufficiently small, if $\phi$
satisfies$$
\mu(\{x : d(\phi_0(x), \phi(x) > \del\}) < \del
$$
then $\phi$ will also belong to $U$.

\br

In order to see that $U$ is dense
(for a sufficiently small $c_a$)  fix $\phi_0 \in \mathcal{C}$
and $\del >0$. We will show that there is a $\phi \in U$
with $\mu(\{ (y,v)  \in Y \times V : \phi(y, v) \not = \phi_0(y, v) \}) < \del$.
As finite valued functions are dense in $\mathcal{C}$, 
we may and will assume that $\phi_0(Y \times V)$ is a finite subset of $G$.
We denote by $\psi_0$ the corresponding (family of) cocycles (\ref{psi}).
\br

{\bf Step 1: Constructing a Rokhlin tower} 

\br

In $X \underset{Z}{\times}X \cong Z \times K/H \times K/H  \times V \times V$ 
we will take a Rokhlin tower with base 
$B = B_0 \times K/H \times K/H \times V \times V$ measurable with respect
to $Z $, which is a factor of all the ergodic components $\mu_k$, of height $2N +1$
with $N > \frac{1}{10\del}$, 
so that, for every $k \in K$ we have $\mu_k(\bigcup_{j =0}^{2N} T_k^n B) > 1 - \frac{\del}{100}$. 
Further conditions on how large $N$ should be will be imposed later.

Next purify the $Z \times K/H \times K/H  \times V \times V$-tower according to $\psi_0$ and $C$; 
i.e. subdivide $B$
into a finite number of subsets $\{B_j\}_{j=1}^J$ so that for each $j$, 
on each level of the column of the tower above $B_j$,
the function $\psi_0$ is constant, and each level 
is either contained in $C$ or in $Y \setminus C$. Of course the atoms of the purified tower
are no longer $K/H \times K/H \times V \times V$-saturated. We call the columns $\{T_k^i B_j\}_{i=0}^{2N},\
0 \leq j  \leq 2N$, the {\em pure columns}.

Now for almost every $k \in K$ the transformation $T_k$ is ergodic and hence, for some $n_0(k)$,
if $N  \geq  n_0(k)$ we will have that, having purified the tower with respect to $C$, 
in most of the pure columns there will be
a proportion of roughly $\mu_k(C) N$ of the levels contained in $C$ in both the lower and 
upper half of the tower. Since the family $\{T_k : k \in K\}$ is measurable with respect to $k$, the index $n_0(k)$
is a measurable function of $k$ and consequently there will be a set $K_0 \subset K$,
with $\la_K(K_0) > 1 - b/10$, for which this will hold for a fixed $N$ and for all $k \in K_0$.

%

While we are working here with $X \underset{Z}{\times}X$, the element $\tau$ that we define has to
be in $[T_X]_f$ and thus to be defined on $X$. 
We can not simply assign $C$-levels in the lower half to $C$-levels in the upper half
as we did in the proof of Proposition \ref{open-densea}.
Instead we shall show that a random assignment of the levels in the lower half to levels in 
the upper half will work provided that $N$ is sufficiently large.
Since the tower is measurable with respect to $Z$, such
an assignment will certainly define an element of $[T_X]_f$.

In order to define $\tau$ we will use a random permutation on $\{0,1, \dots,N-1\}$
to interchange the first $N$-levels of the tower with the last $N$ levels as follows.
Let $\Om$ be the sample space of such a random permutation; that is
$\Om = \Sym(N)$ with the uniform measure, and we use $\pi \in \Om$ to define an 
involution $\tau$ between 
$$ 
\bigcup_{j =0}^{N-1} T_k^j(B) \quad
{\text{ and}} \quad
\bigcup_{j =N+1}^{2N} T_k^j(B)
$$ 
by mapping
$T_k^j(B)$ to $T_k^{\pi(j) + N + 1}(B)$ via $T_k^{N+1 -j +\pi(j)}$.
{\bf Restricting to $X = Z \times K/H \times V$,
this $\tau$ is clearly in $[T_X]_f$.}

\br

Set $\ga_k = \mu_k(C)$.
We will show that there is a choice of a random permutation such that the corresponding $\tau$
will map 
a $\ga_k$-proportion of $C \cap (\bigcup_{j =0}^{N-1} T_k^j(B))$ 
to $C$, for $k$ in a set $K_1 \subset K_0$ 
with $\la_K(K_1) > 1 - b/20$.
To do this we need two lemmas.
For convenience we relegate their proofs to an appendix. 

\br

\begin{lem}\label{randomp}
For any $0 < \ga <1$ 
$$
\Prob \{\pi \in \Sym(N) : | \{i \le \lfloor \ga N \rfloor : \pi(i) \le   \lfloor \ga N \rfloor \} | > \frac12 \ga^2 N \} 
> 1 - \frac{10(1-\ga)}{N \ga^2}.
$$
\end{lem}

\begin{lem}\label{del}
Let $\Pcal = \{P_i : i \in I\}$ be a finite partition of a probability space $(\Om,\mu)$.
Let $E \subset \Om$ with $\mu(E)  < \del$,
then for
$$
I_0 = \{i : \frac{\mu(P_i \cap E)}{\mu(P_i)} > \sqrt{\del}\},
$$
we have then $\sum_{i \not \in I_0} \mu(P_i) > 1 - \sqrt{\del}$.
\end{lem}

\br

\br

{\bf Step 2: Elements $\tau^\om$ good for $\phi_0$}

\br

We can now apply Lemma \ref{randomp} to each fixed pure column based on $B_j$, where there are
at least a $\frac{\ga_{k_0}}{2}$-proportion of the $C$-level in both the upper and lower halfs of the column.
For such pure column, which will fill most of the tower, with probability $\geq 1 - O(\frac{1}{N})$ the random 
$\tau^\om$ will map at least a $\frac{1}{3}\ga_{k_0}^2$-proportion of $C$ to $C$, in that pure column.

In the product space 
$\Om \times B$
define the set $E$ to be the set of pairs
$$
(\om, (z, kH, kk_0H, v_1, v_2))
$$
such that $\tau^\om$ fails the requirement above in the fiber
$\{T_{k_0}^i(z, kH, kk_0H, v_i, v_2)\}_{i=0}^{2N}$, 
and normalize the measure on $B$ to be a probability measure.
Then we apply Lemma \ref{del}, with $\del = O(\frac{1}{N})$ and $\Pcal$-the partition of $\Om$ into points,
to get a set $\Om_0 \subset \Om$ with
probability $\geq 1 - O(\frac{1}{\sqrt{N}})$ such that for each $\om \in \Om_0$ the random $\tau^\om$ maps
$\frac{1}{3}\ga_{k_0}^2$-proportion $C$ to $C$, but for a $O(\frac{1}{\sqrt{N}})$-proportion of the pure columns.

This was done for a fixed $k_0 \in K$, i.e. the set $\Om_0 \subset \Om$ depends on $k_0$, since the
partition of $B$ into pure columns depends on $k_0$.
Now we take this $O(\frac{1}{\sqrt{N}})$ to be less than $\frac{b}{100}$ and appling Fubini's theorem
to $\Om \times K$, we see that there is a random $\om$ and a set $K_2 \subset K$ with 
$\la_K(K_2) > 1 - \frac{b}{20}$, such that $\tau =\tau^\om$ is good for all $k \in K_2$.
%
We let $K_0 = K_1 \cap K_2$ (where $K_1$ is the subset of $K$ defined in Step 1)
so that $\la_K(K_0) > 1 - \frac{b}{10}$.
{\bf In the rest of the proof we fix such a good $\tau$.}

\br

When we focus on a particular $C$-level, say $i_0$ in the lower half
that maps to a $C$-level $i'_0$ in the upper half of the column,
the value of the cocycle  corresponding to $\phi_0$, as we move up the tower to level $N-1$, 
and the value of the cocycle as we move from level $N+1$ up to the level $i'_0$, define a pair
$(h_1, h_2) \in G \times G$.

Let $V_1, V_2$ be $\frac a{10}$-balls around $g_1, g_2$ respectively.
As in the proof of Proposition \ref{open-densea} there is 
a finite set $F \subset G$ of cardinality $m$ such that for any two pairs 
$(h_1,h'_1)$ and $(h_2,h'_2)$ in $G \times G$ as above there is a pair $(f_1, f_2) \in F \times F$ with
$$
h_1f_1h'_1 \in V_1, \qquad  h_2f_2h'_2 \in V_2.
$$
Note that $m$ depends only on $a$.

{\bf In the next step we will use the set $F$ in order to construct our random cocycles $\phi$
which will modify $\phi_0$ only on the central $N+1$ level of the tower.}

\br

{\bf Step 3: Defining the cocycle $\phi$} 

\br

On the space $X \underset{Z}{\times}X  \cong Z \times K/H \times K/H \times V \times V$
we now define sequences of partitions as follows.
We  choose a refining sequence of measurable partitions $\{\Pcal_i\}$ on $Z$
which together generate the Borel $\sig$-algebra $\Bcal_Z$.
Also,  a refining sequence of measurable partitions $\{\Qcal_i\}$ on $K/H$
which together generate the Borel $\sig$-algebra $\Bcal_{K/H}$.
Finally we let $\{\Rcal_i\}$ be a sequence of refining partitions of $V$
which together generate the $\sig$-algebra $\Bcal_V$.
Now consider the corresponding
sequence of partitions $\{\Pcal_i \times \Qcal_i \times \Qcal_i \times \Rcal_i \times \Rcal_i\}$ of 
$Z \times K/H \times K/H \times V \times V$.
By choosing $l \geq i_0$ sufficiently large we can assume
that each atom 
$T_k^N  B_j$
is approximated by $\Pcal_l \times \Qcal_l \times \Qcal_l \times \Rcal_l \times \Rcal_l$ up to a set of relative measure 
$ < \frac{\del}{10}$ (i.e. relative to the total measure of $T_k^N(B_j))$.

We let $d_K$ be a bi-invariant metric on $K$ and let $\bar{d}$ be the restriction of the Hausdorff metric on
the hyperspace $2^K$ of closed subsets of $K$ to the subspace $\{kH : k \in K\}$.
Let $\eta >0$  be such that for $\tilde K = \{k \in K : d_K(k,H) >\eta\}$ we have
$\la(\tilde K) > 1-b/10$,
(this is possible when $\la_K(H) =0$ ( iff $[K, H] =\infty$)). 
We also assume that $l$ is sufficiently large so that 
$$
\max \{\diam_{\bar{d}}(Q) : Q \in \Qcal_l\} < \eta.
$$

Note that, as the metric $d_K$ is invariant, if $k_0 \in \tilde K$ then 
for any $Q \in \Qcal$ and any $kH \in Q$, we have $\bar{d}(kH, kk_0H) > \eta$,
whence $Q \cap  Q' = \emptyset$, for any $Q'$ that contains an element of
$R_{k_0}(kH)= \{\{khk_0H\} : h \in H\}$ for some $kH \in Q$.
%

Once $l$ is chosen we 
choose  a suitable index $l'$ so that 
\begin{equation}\label{l'}
\max \{\mu(P) : P \in \Pcal_{l'}\}
\end{equation}
is small (how small it needs to be will be determined in Step 4).

We can now define our new cocycle $\phi$ by setting it equal to $\phi_0$ outside 
$T^N(B)$,
and on each atom $P \times Q \times R$ of $\Pcal_{l'} \times \Qcal_l \times \Rcal_l
\rest T^N(B_0 \times K/H \times V)$, 
\begin{quote}
letting $\phi$ be a random variable $\xi^\om_{P \times Q \times R}$
taking values in $F$,  uniformly and independently for distinct atoms, 
so that now $\phi = \phi^\om, \ \om \in \Om$, the sample space of 
all these random variables (not to be confused with the probability space $\Om = \Sym(N)$
defined in Step 1).
\end{quote}
 
 Note that for $k_0 \in \tilde K$, if 
 $(z, kH, v_1)$ and $(z, kk_0H,v_2)$
 are in distinct atoms of $\Pcal_{l'} \times \Qcal_l \times \Rcal_l$, then
 $\phi^\om(z, kH, v_1)$ and $\phi^\om(z, kk_0H,v_2)$ are independent and therefore take on each value
 of $F \times F$ with probability $1/ |F|^2$.
 
%
%
%
%
%
%
%
%

\br

{\bf Step 4: Estimating the probability of success} 

\br

Fix $k_0 \in K_0 \cap \tilde K$ and fix a pure column based on $B_j$.
Since $B_j$ is the base of a pure column, either $\tau$ maps it to a $C$-level, or to a level not in $C$.
We will assume that the $i$-th level in the lower part of the column above $B_j$ is 
a $C$-level and that it is mapped by $\tau$ to a $C$-level.
The total $\mu_{k_0}$-measure of these $C$-levels is at least $\frac{1}{6} \mu_{k_0}(C)^2$ (see Step 2).

Suppose  
$(z, kH, kk_0, v_1, v_2) \in  T_{k_0}^i B_j$
and, it is mapped by $\tau$ to level $i' := N+ \pi(i)$.
Let
$$
h_1(z, kH, v_1) = \phi^\om_{N-1 -i}(z, kH, v_1) , \qquad  h_2(z, kk_0H, v_2) = \phi^\om_{N-1 -i}(z, kk_0H, v_2)
$$
and
\begin{gather*}
h'_1(z, kH,v_1) = \phi^\om_{i' - N-1}(T^{N - i +1}(z,kH), S_{(z,kH)}^{(N-i)} v_1), \\
h'_2(z, kk_0H,v_2) = \phi^\om_{i' - N-1}(T^{N - i +1} (z,kk_0H),S_{(z,kk_0H)}^{(N-i)} v_2).
\end{gather*}

Suppose $T^{N -i} (z, kH, v_1) \in P \times Q_1 \times R_1$ and
$T^{N -i}(z, kk_0H, v_2) \in P \times Q_2 \times R_2$.
Then
\begin{gather*}
\psi^\om_\tau(z, kH,  kk_0H, v_1,v_2)  = 
(\phi^\om_\tau(z, kH, v_1), \phi^\om_\tau(z, kk_0H, v_2)) = \\
(h'_1(z, kH,v_1) \cdot \xi_{P \times Q_1 \times R_1}^\om \cdot h_1(z, kH, v_1),
h'_2(z, kk_0H,v_2) \cdot \xi_{P \times Q_2 \times R_2}^\om \cdot  h_2(z, kk_0H, v_2)).
\end{gather*}

\br

The set $T_{k_0}^i B_j$ is partitioned into sets according to the atoms $P, Q_1, Q_2, R_1, R_2$, and
we denote an element of this partition by $\al$.
We let $X^\om_\al  =1$ iff $\psi^\om_\tau$ on $\al$ satisfies 
$$
d(\phi^\om_\tau(z, kH, v_1), g_1) < a \quad \& \quad d(\phi^\om_\tau(z, kk_0H, v_2), g_2) < a.
$$
Now, for $Q_1 \not = Q_2$ the corresponding random variables are independent 
and when we define pieces $\al$ and the corresponding $X_\al$ as before, 
we get $\Prob(X_\al =1) \geq \frac{1}{m^2}$.

\br



We will  need the following lemma, whose proof again is relegated to the appendix.

\begin{lem}\label{simple}
Let $p$ be a number in $(0,1)$ and let $L$ be a positive constant.
Suppose that $w_j \geq 0$ and $\sum_{j =1}^n w_j =1$.
Let $X_j, \ j=1,\dots,n$ be random variables taking values in $\{0,1\}$ with $P(X_j =1) \geq p$.
Suppose further that for each $j$ there is a set $I_j$, \  $ |I_j|  \leq L$,
such that for $i \not\in I_j$, $X_i$ and $X_j$ are independent.
Set $w = \max_{1 \leq j \leq n} w_j$.
Then for $X = \sum_{j=1}^n w_j X_j$ we have
$$
P(X \geq p/2)  \geq 1 - \frac{4(L+1)}{p^2} w.
$$
\end{lem}

\br

We need to estimate, for a fixed $X_\al$,  the number of $X_\beta$ that are not independent of it. 
Clearly if $\al$ and $\beta$ correspond to distinct $\Pcal_{l'}$ atoms, 
$X_\al$ and $X_\beta$ are independent. Therefore the number of ``bad" $\beta$'s is at most
$L : = |\Qcal_l| \cdot |\Rcal_l|^2$. By choosing $l'$ in (\ref{l'})
sufficiently large we can get the maximum weight $w$ in Lemma \ref{simple} small enough so that we get 
for all pure columns and their $C$-levels, for each fixed $k_0$, a set in $\Om$ of probability
at least $1 - b/10$, for which the $\mu_{k_0}$-measure of the set 
$(z, kH,  kk_0H, v_1,v_2) \in C$ in equation (\ref{ess1}) 
is at least
$\frac{1}{100} \cdot \frac{1}{|F|^2} \cdot \mu_{k_0}(C)^2$.

We need to choose a single $\phi^\om$ which is good for a large set of $k_0 \in K_0 \cap \tilde K$.
To do this, as before we use Fubini's theorem on
$(\Om \times K, \Prob  \times \la_K)$.
The set of pairs $(\om, k_0)$ such that equation (\ref{ess1}) (in the definition of the set $U$) 
will hold for $\phi^\om$ by the above discussion, is at least $(1 - b/10)\la_K(K_0 \cap \tilde K)$.
Thus by Fubini's theorem there will certainly be a choice of $\om_0$, 
such that the set of $k_0$ for which (\ref{ess1}) will hold, has $\la_k$ measure at least $1 -b$, as required.
%
%
%
%
%
The constant $c_a$ is now determined to be $\frac{1}{100} \cdot \frac{1}{|F|^2}$.

This concludes the proof of Proposition \ref{open-dense2}.
\end{proof}


\br

Now back to the proof of Theorem \ref{max2}.
By Lemma \ref{BF} what we have to show is that the system
$\Xb_\phi \underset{\Zb} { \times} \Xb_\phi$ is relatively ergodic over 
$\Yb \underset{\Zb} { \times} \Yb$. 
As above we define the set $U= U(C,a, b, c_a, g_1, g_2) \subset \mathcal{C}$ 
and by Proposition \ref{open-dense2} we know that it is open and dense in $\Ccal$.

Now the ergodic components of $\Yb \underset{\Zb} { \times} \Yb$ are of the form
$$
Y_{k_0} = \{(z, kH, kk_0H) : z \in Z,\ k \in K\},
$$
and, as we have seen above, the corresponding ergodic component of $\Xb \underset{\Zb}{\times} \Xb$
has the form (\ref{Tk0})
\begin{equation*}
T_{k_0}(z, kH,  kk_0H, v_1, v_2) =
(Tz, \ga(z) kH, \ga(z) kk_0H, S_{(z,kH)} v_1, S_{(z, kk_0H)}v_2),
\end{equation*}
Thus for a fixed $k_0$ we have a cocycle 
$\psi_{k_0} : Z \times K/H \times K/H \times V \times V \to G \times G$, 
$$
\psi_{k_0}(z, kH, kk_0H, v_1, v_2)) =(\phi(z, kH, v_1), \phi(z, kk_0H, v_2)),
$$
which we must show is ergodic.

Fix $a$ and the corresponding $c_a$ and consider the union 
$K_\infty = \bigcup_{n}K_n \subset K$, with 
\begin{align*}
K_n  = \{k_0 \in K  &: \mu_{k_0}
(\{(z, kH, kH,  v_1, v_2)  \in C   : d(\phi_{\tau}(z, kH, v_1)) , g_1) < a \\
\ & \   \&  \  d(\phi_{\tau}(z, kk_0H, v_2), g_2) < a \}) 
> c _a \mu_{k_0}(C)^2\},
\end{align*}
so that $\la_K(K_n) > 1 - \frac{1}{2^n}$.
Then $\la_K(K_\infty)=1$, and for every $k_0 \in K_\infty$
\begin{align*}
\mu_{k_0}(\{(z, kH, kk_0H,  v_1, v_2) \in C & : \exists \tau \in [T]_f \ {\text {with}} \ 
d(\phi_{\tau}(z, kH, v_1) , g_1) < a \\
&  \   \&  \  d(\phi_{\tau}(z, kk_0H, v_2), g_2) < a\} )> c_a\mu_{k_0}(C)^2.
\end{align*}

Now with 
$$
\Ucal(C,a, c_a,g_1, g_2) = \bigcap_{n} U(C,a, \frac{1}{2^n}, c_a, g_1, g_2),
$$
we have a set $K_\infty =K_\infty(C, a, c_a, g_1, g_2) \subset K$, with $\la_K(K_\infty)=1$
such that for $k \in K_\infty$
$\Ucal(C,a, c_a,g_1, g_2)$ is dense $G_\del$ in $\Ccal$ (we eliminated the parameter $b$).

Now take an intersection 
$$
\Ucal(a,c_a, g_1, g_2) = \bigcap_{n \in \N}  \Ucal(C_n,a, c_a,g_1, g_2)
$$
over a dense collection $\{C_n\}_{n \in \N}$ in the measure algebra 
of $Z \times K/H \times K/H \times V \times V$
and take the corresponding intersection 
$$
K_\infty(a, c_a, g_1, g_2)= \bigcap_{n \in \N} K_\infty(C_n, a, c_a, g_1, g_2).
$$ 
Next  take
$$
\Ucal(g_1, g_2) = \bigcap_{n \in \N} \Ucal(\frac1n,c_{\frac1n}, g_1, g_2)
$$
and the corresponding intersection
$$
K_\infty(g_1,g_2) = \bigcap_{n \in \N} L(\frac1n, c_{\frac1n}, g_1, g_2).
$$
Finally let
$$
\Ucal = \bigcap \{\Ucal(g_1, g_2) (g_1, g_2)  \in G_0 \times G_0\},
$$
with $G_0 \subset G$ a countable dense subset of $G$,
and the corresponding
$$
L = \bigcap \{K_\infty(g_1, g_2) : (g_1, g_2)  \in G_0 \times G_0\}.
$$
The existence of the set $L$ demonstrates the fact that for almost all $k \in K$,
and every cocycle $\phi$ in the residual set $\Ucal$,
every pair $(g_1,g_2) \in G \times G$ is an essential value for the cocycle $\psi_k$. 
It now follows that the system 
$\Xb_\phi \underset{\Zb} { \times} \Xb_\phi$ is relatively ergodic over 
$\Yb \underset{\Zb} { \times} \Yb$
and our proof of Theorem \ref{max2} is complete.
%
%
%
%
%
%
\end{proof}

\br

The next theorem is actually a special case of Theorem \ref{max2} (when $\Xb = \Yb$),
however it is convenient for us to formulate it as a separate theorem,
in order to use it in the following proof of Theorem \ref{rank-succ}.

\begin{thm}\label{max}
Let  $\Yb \to \Zb$ be an infinite factor of the ergodic system $\Yb$
such that the extension $\Yb \to \Zb$ is a $K/H$-extension.
Then for a generic $\phi \in \mathcal{C}(Y,G)$ the system 
$\Yb_\phi = (Y \times G, \mathcal{Y} \times \Bcal_G, \mu \times \la_G,T_\phi)$,
is ergodic and the projection map, $\pi : \Yb \to \Zb$, 
is the maximal compact extension of $\Zb$ within $\Yb_\phi$.
\end{thm}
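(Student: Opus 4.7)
The plan is to obtain this as a direct specialization of Theorem~\ref{max2} by taking $\Xb = \Yb$, so that the extra intermediate factor collapses and everything else is inherited.

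First I would check the hypotheses of Theorem~\ref{max2} in this specialization: the chain $\Xb \to \Yb \to \Zb$ collapses to $\Yb \to \Yb \to \Zb$; the extension $\Yb \to \Zb$ is a $K/H$-extension by assumption; $\Zb$ is infinite by assumption; and the maximality condition ``$\Yb \to \Zb$ is the maximal compact extension of $\Zb$ within $\Xb$'' is tautological since $\Xb = \Yb$. Ergodicity of $\Xb = \Yb$ is immediate from the fact that the given $K/H$-extension of $\Zb$ is ergodic (this is built into Setup~\ref{setup-limit}, via the choice of a minimal cocycle $\ga$ with $K_\ga = K$ as in Theorem~\ref{homog}).

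Under this specialization the Rokhlin decomposition $X = Y \times V$ used in Setup~\ref{setup-limit} degenerates to $V = \{*\}$ with $\rho$ the point mass and the Rokhlin cocycle $S$ trivial, so $\mathcal{C}(X,G) = \mathcal{C}(Y,G)$, the relative product $X \underset{Z}{\times} X$ reduces to $Y \underset{Z}{\times} Y$, and the auxiliary cocycles $\psi_{k_0}$ depend only on the variables $(z, kH, kk_0H)$. With this identification the entire machinery of the proof of Theorem~\ref{max2}---the open dense sets $U(C, a, b, c_a, g_1, g_2)$, the Rokhlin tower and random permutation construction producing $\tau \in [T_X]_f = [T_Y]_f$, the random finite-valued modification of $\phi_0$ using the finite set $F \subset G$, the combinatorial lemmas \ref{randomp}, \ref{del}, \ref{simple}, and the final Fubini extraction of essential values $(g_1,g_2) \in G_0 \times G_0$ for almost every ergodic component---applies verbatim.

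Reading off the conclusion of Theorem~\ref{max2} in this specialization then gives exactly the assertion of Theorem~\ref{max}: for a generic $\phi \in \mathcal{C}(Y,G)$ the skew-product $\Yb_\phi$ is ergodic and $\pi : \Yb \to \Zb$ is the maximal compact extension of $\Zb$ inside $\Yb_\phi$. There is no genuine obstacle here: since Theorem~\ref{max2} was established for an \emph{arbitrary} ergodic intermediate $\Xb$ lying above $\Yb$, the choice $\Xb = \Yb$ is a strict specialization rather than a new case, and the only reason to list Theorem~\ref{max} separately (as the remark preceding the statement indicates) is that this particular form is the one invoked in the proof of Theorem~\ref{rank-succ}.
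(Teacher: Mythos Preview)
Your proposal is correct and is exactly the paper's approach: the paper states just before Theorem~\ref{max} that it is the special case $\Xb = \Yb$ of Theorem~\ref{max2}, and your write-up simply makes explicit the (trivial) verification of hypotheses in that specialization.
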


\br

We can now complete the proofs of Theorems \ref{rank-succ} and \ref{thm-main}.

\br

\begin{proof}[Proof of Theorem \ref{rank-succ}]
Apply a transfinite induction along the ordinal $\al$, using  Theorem \ref{max} 
for  successor ordinals and,  Lemma \ref{IL} and Theorem \ref{max2} for limit ordinals.
\end{proof}

\br
%
%
%
%


\begin{proof}[Proof of Theorem \ref{thm-main}]
As was explained in Section \ref{sec-rank} the combination of Theorems 
\ref{rank-succ} and Theorem \ref{rank-limit}
implies Theorem \ref{thm-main}.
\end{proof}

\br

\section{Some corollaries}\label{sec-cor}

\br

\begin{setup}\label{setup-cor}
Let $K$ and $G$ be two arbitrary second countable compact groups.
 Suppose that $\Zb = (Z, \mathcal{Z}, \nu, R)$ is an ergodic $\Z$-system and 
 suppose that $\Yb = (Y, \mathcal{Y}, \mu,T)$, with
 $\mu = \nu \times \la_K$, $\la_K$ being the Haar measure on $K$,
 is a $K$-extension of $\Zb$.
 Set $X = Y \times G = Z \times K \times G$ and let $\mathcal{C} = \Ccal(Y,G)$ 
 be the Polish space of Borel maps $\phi : Y \to G$.
 For $\phi \in \mathcal{C}$ set 
 $$
 T_\phi(y, g) = (Ty, \phi(y)g), \quad (y \in Y, g \in G).
 $$
\end{setup} 

\br

\begin{thm}\label{fixedk}
For any $n \geq 1$ and fixed $(k_1, k_2, \dots, k_n)$ of distinct elements of $K$ 
there is a dense $G_\del$ subset $\Ccal_0 \subset \Ccal(Y,G)$ such that:
\begin{enumerate}
\item
For $\phi \in \Ccal_0$ the corresponding cocycle
$$
\phi_{(k_1, k_2, \dots, k_n)} : Y \to G^{n+1}
$$
defined by
$$
\phi_{(k_1, k_2, \dots, k_n)}(y) = (\phi(y), \phi(yk_1), \dots, \phi(yk_n))
$$
is ergodic.
\item
Furthermore, denoting by $\Yb_{(k_1, k_2, \dots, k_n)}$ the corresponding 
ergodic skew product on $Y \times G^{n+1}$, we have that 
the extension $\Yb \to \Zb$ is the largest compact extension of 
$\Zb$ in $\Yb_{(k_1, k_2, \dots, k_n)}$.
\item
Denoting, for $k \in K$, by $\phi_k : Y \to G$ the cocycle $\phi_k(y) = \phi(yk)$, we have that the corresponding
skew products $\Yb_{\phi_{k_i}}$, $i=0,1,\dots,n$, are jointly disjoint over their common factor $\Yb$.
\end{enumerate}
\end{thm}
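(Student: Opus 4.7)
The three parts are closely linked. Part (3) follows from Part (1) by the standard theory of joinings of compact group extensions: the relative independent joining of $\{\Yb_{\phi_{k_i}}\}_{i=0}^n$ over $\Yb$ is precisely $\Yb_{(k_1,\dots,k_n)}$, and its ergodicity, asserted in Part (1), forces every joining of the family over $\Yb$ to collapse to it via the equivariance of the $G^{n+1}$-action on the fibres. Part (2) is equivalent, by Lemma \ref{BF}, to the relative ergodicity of the extension $\Yb_{(k_1,\dots,k_n)} \underset{\Zb}{\times} \Yb_{(k_1,\dots,k_n)} \to \Yb \underset{\Zb}{\times} \Yb$; parametrising the ergodic components of the base by $k_0\in K$ reduces this to ergodicity, for $\la_K$-a.e.\ $k_0$, of the $G^{2n+2}$-valued cocycle on $Y$ with shifts $e, k_1, \dots, k_n, k_0, k_0 k_1, \dots, k_0 k_n$, which are pairwise distinct for a.e.\ $k_0$. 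Thus the whole proof reduces to establishing ergodicity of $y \mapsto (\phi(y), \phi(yk_1), \dots, \phi(yk_n))$ for any fixed tuple of distinct shifts, and then producing the analogous statement with uniformity in a measurable $k_0$-parameter.

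For the fixed-tuple assertion I would follow the essential-values scheme of Theorem \ref{generica}. Given a positive-measure $C \subset Y$, tolerances $a, c_a > 0$, and a target $(g_0, \dots, g_n) \in G^{n+1}$ (with $k_0 = e$), define $U(C, a, c_a, (g_0, \dots, g_n)) \subset \Ccal(Y, G)$ to be the set of $\phi$ admitting some $\tau \in [T]_f$ with $\tau(C) = C$ and
\[
\mu\bigl(\{y \in C : d_G(\phi_\tau(y k_i), g_i) < a,\ i = 0, \dots, n\}\bigr) > c_a \mu(C).
\]
Openness is routine, as in Proposition \ref{open-densea}. For density I would start with a finite-valued $\phi_0$, build a Rokhlin tower of height $2N+1$ with $K$-saturated base $B = B_0 \times K$ (so that interchanges of $Z$-measurable levels yield elements of $[T]_f$), purify with respect to $\phi_0$ and $C$, and use a random permutation combined with Lemma \ref{randomp} to produce $\tau$ sending a positive fraction of $C$-levels in the lower half of the tower to $C$-levels in the upper half. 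The essential new ingredient beyond Theorem \ref{generica} is that the partition of the central level $T^N B$ must be refined in the $K$-coordinate with diameter strictly less than $\tfrac13\min_{i \neq j} d_K(k_i, k_j)$, so that for every $y$ in the central level the $n+1$ points $y, y k_1, \dots, y k_n$ lie in pairwise distinct atoms. A random independent assignment of values from a finite set $F \subset G$ dense enough to approximate any prescribed $(n+1)$-tuple then gives, via Lemma \ref{simple}, a cocycle in $U$ arbitrarily close to $\phi_0$. Countable intersection over a dense family of sets $C$ in $(\Ycal, \mu)$, a countable dense family of targets in $G^{n+1}$, and $a \to 0$ produces the dense $G_\delta$ set $\Ccal_0$ for the fixed-tuple form of Part (1).

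For Part (2) I would upgrade this construction to handle a measurable $k_0$-parameter, in direct analogy with Theorem \ref{max2}: the set $U$ is enlarged to require that the interchange condition hold for $k_0$ in a subset of $K$ of $\la_K$-measure greater than $1 - b$, and Fubini on $\Om \times K$ is used to extract a single $\tau$ and a single random $\phi$ simultaneously good for a large measure of $k_0$'s. The main technical obstacle, exactly as in Proposition \ref{open-dense2}, is securing uniformity in $k_0$ of the partition of the central level separating the $2n+2$ shifts; one discards the $\la_K$-null set of $k_0$'s for which any two of the shifts $\{e, k_1, \dots, k_n, k_0, k_0 k_1, \dots, k_0 k_n\}$ coincide, and the bi-invariant-metric argument used for the fixed tuple applies simultaneously on the remaining full-measure set. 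Intersecting the resulting $G_\delta$ sets with $\Ccal_0$ from the fixed-tuple step yields the dense $G_\delta$ subset satisfying all of (1), (2) and (3).
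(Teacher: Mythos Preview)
Your proposal is correct and follows essentially the same approach as the paper. The paper's own proof is a brief sketch: it says parts (1) and (2) are proved ``similar to the proof of Theorem \ref{max}'' with two differences (the groups become $K^n$ and $G^{n+1}$, and one deals with a single fixed tuple rather than a large-measure set of tuples), gives the definition of the open set $U(C,a,c_a,g_1,\dots,g_{n+1})$ exactly as you do, and leaves the rest to the reader; part (3) is deduced from (1) via the observation that $\Yb_{(k_1,\dots,k_n)}$ is the relative independent product, together with \cite[Theorem 3.30]{G-03}. Your write-up supplies the details the paper omits --- the need for a $Z$-measurable $\tau$ (hence the $K$-saturated base and the random permutation), the $K$-diameter condition on the central-level partition to separate the shifts, and the explicit reduction of (2) via Lemma \ref{BF} to a $G^{2n+2}$-valued cocycle with $k_0$-parameter --- all of which are implicit in ``follow Theorem \ref{max}''. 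One small sharpening: for part (2) you must discard not only the null set of $k_0$ where two shifts coincide, but a set of small (not zero) $\la_K$-measure where some pair of shifts is within $\eta$ in the bi-invariant metric; this is the role of the set $\tilde K$ in Proposition \ref{open-dense2}, and is needed for the partition to separate all $2n+2$ shifts uniformly in $k_0$.
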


\begin{proof}
(1) and (2) :
The proof is similar to the proof of Theorem \ref{max}.
The situation here differs in two ways.
The first is the fact that we are considering here the groups $K^n$ and $G^{n+1}$
rather than $K$ and $G$. This change however does not cause any new difficulty,
the generalization is straightforward.
The second way actually makes the proof much easier since
we now have to deal with a single element of
the group $K^n$, rather than a subset of $K^n$ with large Haar measure.
Following the proof of Theorem \ref{max},  one formulates a proposition similar to
Proposition \ref{open-dense2}, where now the definition of the open set $U$
is simplified, as follows :

Fix a subset $C \subset Y, \ \mu(C) >0$, elements $g_1, \dots, g_{n+1}$ in $G$, a positive small
constant $a$, and a positive constant $c_a$ (which will depend on $a$). 
Define the set $U(C,a, c_a, g_1, g_2, \dots, g_{n+1}) \subset \mathcal{C}$ 
as the collection of all the functions $\phi \in \mathcal{C}$
with the following property :

\br 

There exists an element $\tau \in [T]_f$ such that :
\begin{enumerate}
\item
$ \tau (C) = C$,
 \item
 With $k_0 =e$,
$$
\mu(\{y \in C :  \forall j, \ 0 \leq j \leq n,
\  d(\phi_{\tau}(yk_j), g_{j+1}) < a \}) > c _a \mu(C).
$$
\end{enumerate}
We leave the details of the proof to the reader.

(3) Since the skew product system $\Yb_{(k_1, k_2, \dots, k_n)}$ is clearly isomorphic to
the relative independent product of $\prod_{i=0}^n \underset{\Yb}{ }  \Yb_{\phi_i}$,
it follows that the latter's relative product measure is ergodic and thus, by \cite[Theorem 3.30]{G-03},
it is the unique invariant measure on $\Yb_{(k_1, k_2, \dots, k_n)}$ which projects onto $\mu$.
\end{proof}

\br

\begin{thm}\label{Mych}
\begin{enumerate}
\item
There is a dense $G_\del$ subset $\Ccal_1 \subset  \Ccal(Y,G)$ 
such that for each $\phi \in \Ccal_1$ and every $n \geq 1$,
there is a dense $G_\del$ subset $A_n \subset K^n$
with the property that for every $(k_1, k_2, \dots, k_n) \in A_n$
the corresponding cocycle $\phi_{(k_1, k_2, \dots, k_n)}$
from $Y$ to $G^{n+1}$ is ergodic.
\item
There is a Cantor subset $K_0 \subset K$ such that for every
$n \ge 1$ and every $(k_1, k_2, \dots, k_n)$ with $k_i \in K_0, \ i=1,2,\dots,n$,
the corresponding cocycle $\phi_{(k_1, k_2, \dots, k_n)}$
from $Y$ to $G^{n+1}$ is ergodic, so that, as in Theorem \ref{fixedk}(3), the skew products
$\Yb_{\phi_{k_i}}$, $i=0,1,\dots,n$, are jointly disjoint over their common factor $\Yb$. 
\end{enumerate}
\end{thm}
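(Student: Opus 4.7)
The plan is to deduce both parts from Theorem \ref{fixedk} via soft arguments: part (1) will follow from the Kuratowski--Ulam theorem (the topological analogue of Fubini), and part (2) from Mycielski's classical theorem on independent sets in perfect Polish spaces.

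For part (1), fix $n \geq 1$ and consider the set
$$
V_n = \{(\phi, \bar{k}) \in \Ccal(Y, G) \times K^n : \phi_{\bar{k}} \text{ is ergodic}\},
$$
where $\bar{k} = (k_1, \dots, k_n)$. Writing the ergodicity of $\phi_{\bar{k}}$ as the countable-intersection condition that every $(g_1, \dots, g_{n+1}) \in G_0^{n+1}$ (with $G_0$ a countable dense subset of $G$) is an essential value of $\phi_{\bar{k}}$, and unfolding each essential-value condition into countably many $U(C, a, c_a, g_1, \dots, g_{n+1})$-style requirements parametrized by countable dense collections of the remaining parameters, one sees that $V_n$ is a Borel subset of $\Ccal \times K^n$. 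By Theorem \ref{fixedk}(1), for every $\bar{k}$ with pairwise distinct coordinates (a comeager subset of $K^n$), the slice $V_n^{\bar{k}} := \{\phi : (\phi, \bar{k}) \in V_n\}$ is a dense $G_\delta$ in $\Ccal$, hence comeager. The Kuratowski--Ulam theorem then guarantees that $V_n$ is comeager in $\Ccal \times K^n$, and applying Kuratowski--Ulam in the reverse direction yields a comeager set $\Ccal_1^{(n)} \subset \Ccal$ for which the slice $(V_n)_\phi$ is comeager in $K^n$. Refining to dense $G_\delta$'s and intersecting over $n$ produces the required $\Ccal_1 = \bigcap_n \Ccal_1^{(n)}$; for each $\phi$ in this set, $A_n := (V_n)_\phi$ is the desired dense $G_\delta$ in $K^n$.

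For part (2), fix $\phi \in \Ccal_1$. For each $n \geq 1$ the complement $M_n := K^n \setminus A_n$ is meager. Since $K$, being an infinite compact second countable group, is a perfect Polish space, Mycielski's theorem on independence provides a Cantor set $K_0 \subset K$ such that for every $n$ and every $n$-tuple of pairwise distinct points $(k_1, \dots, k_n) \in K_0^n$ one has $(k_1, \dots, k_n) \in A_n$, so that $\phi_{(k_1, \dots, k_n)}$ is ergodic. We arrange $e \notin K_0$ by throwing the additional meager set $\bigcup_{i=1}^n (K^{i-1} \times \{e\} \times K^{n-i})$ into the hypothesis of Mycielski's theorem, so the augmented tuple $(e, k_1, \dots, k_n)$ has $n + 1$ pairwise distinct entries; the joint disjointness of the skew products $\Yb_{\phi_{k_i}}$, $i = 0, 1, \dots, n$, over their common factor $\Yb$ then follows from Theorem \ref{fixedk}(3).

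The principal technical obstacle is the verification of the Kuratowski--Ulam hypothesis, namely that $V_n$ has the Baire property. The subtle point is that for a merely measurable $\phi$ the map $k \mapsto \phi(\cdot\, k)$ need not be continuous, so the open-and-dense sets furnished by the Proposition \ref{open-dense2}-style construction inside Theorem \ref{fixedk} are not obviously jointly open in $(\phi, \bar{k})$. However, only Borel measurability of $V_n$ is required for Kuratowski--Ulam to apply, and this is straightforward from the measure-theoretic description of ergodicity as a countable conjunction of measurable conditions on the joint variable; once this is granted, the Kuratowski--Ulam step and the subsequent appeal to Mycielski's theorem are entirely formal.
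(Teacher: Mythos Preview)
Your proposal is correct and follows essentially the same route as the paper: part (1) via the Kuratowski--Ulam theorem applied to the input from Theorem \ref{fixedk}, and part (2) via Mycielski's theorem. The paper's proof consists of exactly these two citations without elaboration; your extra care in verifying the Baire-property hypothesis for $V_n$ and in arranging $e\notin K_0$ so that the augmented tuple $(e,k_1,\dots,k_n)$ has distinct coordinates is a welcome filling-in of details the paper leaves implicit.
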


\begin{proof}
The first claim follows from the Kuratowski--Ulam theorem (see, for example, \cite[Theorem 8.41]{K}).
Then the second claim follows by Mycielski's theorem (see, for example, \cite[Theorem 19.1]{K}).
\end{proof}

\br

\begin{thm}\label{JPaug}
Let $K$ be an infinite monothetic compact second countable topological group with a topological generator $a$
(i.e. $K = \ol{\{a^n : n \in \Z\}}$). Let $\Kb =(K, \Bcal_K, \la_K, T_a)$ be the corresponding
ergodic system.
Let $G$ be an arbitrary compact second countable topological group.
Then for a generic cocycle $\phi \in \Ccal(K,G)$ the corresponding skew product 
system $\Xb_\phi = (K \times G, \Bcal_K \times \Bcal_G, \la_K \times \la_G, T_\phi)$
is (i) ergodic and (ii) the system $\Kb$ is the largest compact factor of $\Xb_\phi$.
In particular then $\Xb_\phi$ is distal of rank $2$.
\end{thm}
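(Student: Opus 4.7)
The plan is to obtain this theorem as a direct specialization of the arguments of Section \ref{sec-ranks} to the degenerate case where the base system $\Zb$ is a single point. View $\Kb$ as the $K/H$-extension of the trivial one-point system with $H = \{e\}$ and constant cocycle $\gamma \equiv a$; ergodicity of $\Kb$ is immediate from the fact that $a$ topologically generates $K$. The hypothesis that $K$ is infinite is exactly what is needed to avoid the obstruction described in Remark \ref{finite}.

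For part (i) --- the ergodicity of $\Xb_\phi$ for generic $\phi$ --- the result is immediate from Theorem \ref{generica} applied to $\Yb = \Kb$ and the given compact group $G$. For part (ii), by Lemma \ref{BF} what must be shown is that $\Xb_\phi \times \Xb_\phi$ is relatively ergodic over $\Kb \times \Kb$ for $\phi$ in some dense $G_\del$. The ergodic components of $\Kb \times \Kb$ are parametrized by $k_0 \in K$ via $\{(k, kk_0) : k \in K\} \cong K$, and on such a component the cocycle induced by $\phi$ is $k \mapsto (\phi(k), \phi(kk_0))$. So the task is to show that, for generic $\phi$, this cocycle on $\Kb$ is ergodic for $\la_K$-almost every $k_0 \in K$.

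This is precisely the content of Theorems \ref{max2} / \ref{max} specialized to $\Zb = \{\mathrm{pt}\}$. The same Baire category construction as in the proof of Theorem \ref{max2} goes through, with simplifications: the $\Zcal$-measurability constraint on the Rokhlin tower bases is now vacuous, and the partial transformation $\tau$ lives directly in $[T_a]_f$ on the infinite ergodic system $\Kb$, with Rokhlin towers of any desired height available because $K$ is infinite. The random-perturbation estimates (Lemmas \ref{randomp}, \ref{del}, \ref{simple}) apply verbatim, the crucial point being that for $k_0$ outside a small neighborhood of the identity (of $\la_K$-measure shrinking as the partition refines), the points $k$ and $kk_0$ lie in distinct atoms of the partition defining the random perturbation, so that $\phi(k)$ and $\phi(kk_0)$ are independent $F$-valued random variables and any prescribed pair $(g_1, g_2) \in G \times G$ is hit with a uniform lower-bounded probability.

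The rank-$2$ conclusion is then automatic: $\Xb_\phi \to \Kb$ is by construction a compact $G$-extension and $\Kb$ is itself compact (rank $1$), so the canonical distal tower of $\Xb_\phi$ reads $\{\mathrm{pt}\} \to \Kb \to \Xb_\phi$, with $G$ tacitly nontrivial so that the last stage is genuine. The main (but rather mild) obstacle is to confirm that the $\Zcal$-measurability hypothesis in the proof of Theorem \ref{max2} really can be dropped when $\Zb$ is a single point: in that proof $\Zb$-measurability served to guarantee that the level-swap $\tau$ on $X \underset{Z}{\times} X$ descends to an element of $[T_X]_f$, and when there is nothing to descend from this is automatic. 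Everything else is bookkeeping.
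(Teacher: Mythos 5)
There is a genuine gap, and it is precisely in the sentence you flag as a ``mild obstacle.'' Your description of what the $\Zcal$-measurability does in the proof of Theorem \ref{max2} is backwards. The tower base being $\Zcal$-measurable is \emph{not} there just so that the level-swap ``descends'' to an element of $[T_X]_f$. Its real job is this: the essential-value criterion for the cocycle $\psi_{k_0}$ on the ergodic component $W_{k_0}\subset X\underset{Z}{\times}X$ requires an element $\tau'$ of $[T_{k_0}]_f$, the finite full group of the \emph{restricted} diagonal action on $W_{k_0}$. A $\tau\in[T_X]_f$ with orbit-cocycle $\sigma:X\to\Z$ yields, on $W_{k_0}$, the map $(x_1,x_2)\mapsto(T^{\sigma(x_1)}x_1,\,T^{\sigma(x_1)}x_2)$ (the \emph{same} exponent $\sigma(x_1)$ on both coordinates), and this lies in $[T_{k_0}]_f$ exactly because $W_{k_0}\cong X$ via the first coordinate. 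By contrast, the naive diagonal $\tau\times\tau$ applies $\sigma(x_1)$ to the first coordinate and $\sigma(x_2)$ to the second, so it does \emph{not} preserve $W_{k_0}$ unless $\sigma(x_1)=\sigma(x_2)$ for $(x_1,x_2)\in W_{k_0}$. The $\Zcal$-measurability of the tower (and hence of $\sigma$) is precisely what forces this equality, and, equally importantly, it is what makes the paper's condition (\ref{ess1}), which is written as $d(\phi_\tau(z,kH,v_1),g_1)<a$ and $d(\phi_\tau(z,kk_0H,v_2),g_2)<a$ \emph{with two separate evaluations of $\phi_\tau$}, coincide with the genuine Birkhoff product $\psi_{k_0,\tau'}=(\phi_{\sigma(x_1)}(x_1),\phi_{\sigma(x_1)}(x_2))$ that the essential-value argument requires.

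When $\Zb$ is the one-point system, the only $\Zcal$-measurable $\sigma$ is a constant, which is useless, and building the tower on $\Kb$ itself (as you propose) does \emph{not} make the problem go away: the level of $k$ and the level of $kk_0$ in that tower are generically different, so $\sigma(k)\ne\sigma(kk_0)$ and condition (\ref{ess1}) is no longer the right thing to verify. The correct object is $(\phi_{\sigma(k)}(k),\phi_{\sigma(k)}(kk_0))$; the orbit of $kk_0$ is pushed up by $\sigma(k)$ steps, which may land it outside the tower entirely, and whether its path even crosses the randomized central level depends on which level $kk_0$ sits in relative to $k$. None of this is ``bookkeeping'': the purification of columns, the construction of $\tau$ as a random level-swap, and the independence count over atoms in Step~4 all have to be reworked around the single exponent $\sigma(k)$ and around the positive-measure set of $k$ for which both paths stay inside the tower and cross the central randomized level. (To be fair, the paper's own proof of Theorem \ref{JPaug} is a bare citation of Theorem \ref{max}, whose hypothesis explicitly requires $\Zb$ infinite and so does not literally cover $\Zb=\{\mathrm{pt}\}$; the modifications indicated above, or an argument via Theorem \ref{generical} with a genuinely nontrivial $\Zb$, are needed to make that citation precise, and the burden you took on in your proposal was exactly to supply them.)
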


\begin{proof}
This is in fact a special case of Theorem \ref{max}.
\end{proof}

\br

\section{The case of a weakly mixing extension}\label{sec-wm-ext}

\begin{setup}\label{setup-wm}
Let  $\Yb  \to \Zb$ be a nontrivial relatively weakly mixing extension of ergodic systems. 
Let $G$ be a second countable compact topological group with Haar measure $\la_G$.
Let $\Ccal= \Ccal(Y,G)$ be the Polish space of measurable cocycles $\phi : Y \to G$.
For $\phi \in \Ccal$ we let 
$$
T_\phi (y, g) = (Ty, \phi(x)g), \ y \in Y, g \in G,
$$
and $\Yb_\phi = (Y \times G, \Ycal \times \Bcal_G, \mu \times \la_G, T_\phi)$.
We let $\psi : Y \times Y \to G \times G$ be the cocycle
$\psi(y_1, y_2) = (\phi(y_1), \phi(y_2))$.
\end{setup}

\br

\begin{thm}\label{rwm}
For a generic cocycle $\phi \in \Ccal(Y,G)$ and the corresponding skew product transformation 
$T_\phi : Y \times G \to Y \times G$, the extension $\Yb_\phi \to \Zb$ is relatively weakly mixing.
In particular, when $\Yb$ is weakly mixing (i.e. when $\Zb$ is the trivial one point system)
so is the system $\Yb_\phi$ for a generic cocycle $\phi \in \Ccal(Y,G)$. 
\end{thm}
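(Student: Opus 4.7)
The plan is to reduce the claim to an essential-values argument and then mirror the strategy used in Theorem \ref{max2}. Since $\Zb$ is ergodic, the extension $\Yb_\phi \to \Zb$ is relatively weakly mixing if and only if the relative product $\Yb_\phi \underset{\Zb}{\times} \Yb_\phi$ is ergodic. That system is a skew product of the base $\Yb \underset{\Zb}{\times} \Yb$ (which is ergodic by the assumed relative weak mixing of $\Yb \to \Zb$) by the $G \times G$-valued cocycle $\psi(y_1,y_2) = (\phi(y_1),\phi(y_2))$ with respect to the diagonal action of $T$. By the essential-values criterion it therefore suffices to show that for a generic $\phi \in \Ccal(Y,G)$, every pair $(g_1,g_2) \in G \times G$ is an essential value of $\psi$.

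For each measurable $C \subset Y \underset{Z}{\times} Y$ of positive measure, each $(g_1,g_2) \in G \times G$ and each small $a>0$, I would define
$$
U(C,a,c_a,g_1,g_2) \subset \Ccal(Y,G)
$$
to consist of those $\phi$ for which some $\tau \in [T_Y]_f$ with $Z$-measurable defining partition (so that $\tau$ extends diagonally to $Y \underset{Z}{\times} Y$) sends a $c_a$-fraction of $C$ into the set where $\psi_\tau$ lies within $a$ of $(g_1,g_2)$. Openness is routine. The density step follows the Rokhlin-tower scheme of Proposition \ref{open-dense2}: erect a tower over $\Yb \underset{\Zb}{\times} \Yb$ with $Z$-measurable base, purify with respect to a finite-valued approximant of a given $\phi_0 \in \Ccal$ and with respect to $C$, use a random permutation $\pi \in \Sym(N)$ to interchange $C$-levels in the lower and upper halves (via Lemma \ref{randomp}), and modify $\phi_0$ on the central level of the tower by i.i.d.\ uniform choices in a finite set $F \subset G$ of cardinality $m=m(a)$, indexed by atoms of a sufficiently fine measurable partition $\mathcal{P}$ of $Y$.

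The hard part will be that the modification is made on $Y$, while the ergodicity statement is over $Y \underset{Z}{\times} Y$: the random values $\phi(y_1)$ and $\phi(y_2)$ are independent only when $y_1,y_2$ lie in \emph{distinct} atoms of $\mathcal{P}$. The observation that makes the argument go through is that the diagonal $\Delta Y \subset Y \underset{Z}{\times} Y$ is $T$-invariant; by ergodicity of the base $\Yb \underset{\Zb}{\times} \Yb$ together with the nontriviality of $\Yb \to \Zb$ (a full-measure diagonal would force $\Yb = \Zb$), $\Delta Y$ has measure zero. Consequently, choosing $\mathcal{P}$ fine enough and separating points in fibers of $\pi : \Yb \to \Zb$ makes the ``same atom'' set $\bigsqcup_{P \in \mathcal{P}}(P \underset{Z}{\times} P)$ arbitrarily small; off that exceptional set the two entries of $\psi_\tau$ are independent $F$-valued random variables, Lemma \ref{simple} yields the required positive-fraction estimate on $C$, and a Fubini argument on the product of the permutation sample space and the cocycle sample space delivers a deterministic $\phi \in U$ within $\del$ of $\phi_0$.

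To conclude, I would take countable intersections of these dense open sets over a countable dense family $\{C_n\}$ in the measure algebra of $Y \underset{Z}{\times} Y$, a sequence $a = 1/n \downarrow 0$ with matching constants $c_{1/n}$, and pairs $(g_1,g_2)$ drawn from $G_0 \times G_0$ for a countable dense $G_0 \subset G$. The resulting dense $G_\delta$ subset $\Ccal_0 \subset \Ccal(Y,G)$ consists of cocycles $\phi$ for which every element of $G \times G$ is an essential value of $\psi$; this gives ergodicity of $\Yb_\phi \underset{\Zb}{\times} \Yb_\phi$ and hence relative weak mixing of $\Yb_\phi \to \Zb$ for generic $\phi$.
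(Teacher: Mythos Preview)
Your reduction to the essential-values criterion for the cocycle $\psi(y_1,y_2)=(\phi(y_1),\phi(y_2))$ on the ergodic base $\Wb=\Yb\underset{\Zb}{\times}\Yb$ is correct, and your outline does yield a valid proof. However, the route you take differs from the paper's in several respects, and the comparison is instructive.

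The paper works with $\tau$ in the full group $[T\times T]_f$ of the diagonal action on $W$ (so the defining partition is $W$-measurable, not merely $Z$-measurable). This lets it purify a height-$3N$ Rokhlin tower in $W$ with respect to $C$ and then \emph{directly} match $C$-levels in the bottom third to $C$-levels in the top third, bypassing the random-permutation machinery of Lemma~\ref{randomp} that you import from Proposition~\ref{open-dense2}. Your restriction to $Z$-measurable $\tau$ is not wrong---such $\tau$ do lie in $[T\times T]_f$---but it is an unnecessary constraint here, since there is only one ergodic component to handle rather than a $\la_K$-family as in Theorem~\ref{max2}. On the other hand, your modification of $\phi_0$ on a single central level, combined with the purely measure-theoretic observation that $\zeta(\Delta Y)=0$ forces $\zeta\bigl(\bigsqcup_{P\in\Pcal}P\underset{Z}{\times}P\bigr)\to 0$ as $\Pcal$ refines, is a clean substitute for what the paper does: the paper modifies $\phi_0$ on a small set $A\subset Y$ of measure $<\del/10$, passes to strictly ergodic topological models (via \cite{W}) in order to define an ``$\eta$-separated'' set on which the $6N$ orbit points $T^iy_1,T^jy_2$ are pairwise far apart, and then takes atoms of diameter $<\eta/100$. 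Your approach thus avoids the appeal to topological models entirely. Finally, your invocation of Lemma~\ref{simple} is more than you need (and as stated you would have to introduce a second, finer partition $\Pcal_{l'}$ as in Step~3 of Proposition~\ref{open-dense2} to control the weight $w$ independently of the dependency bound $L$); the paper instead observes that for each separated $(y_1,y_2)$ the success probability is at least $1/m^2$ and applies Fubini on $\Om\times C$ directly to extract a single good $\om$. That shortcut is available to you as well and would close the argument cleanly.
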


\begin{proof}
Let $\pi : Y \to Z$ be the factor map $\Yb \to \Zb$.
Our assumption is that the system $\Wb = \Yb \underset{\Zb}{\times} \Yb$
with 
$$
W = Y \underset{Z}{\times} Y =  \{(y_1, y_2) : \pi(y_1) = \pi(y_2)\} \subset Y \times Y,
$$
$T(y_1, y_2) = (Ty_1, Ty_2)$, and $\zeta = \mu \underset{\pi}{\times} \mu$
the relative product measure over $\pi$, is ergodic.

What we have to show is that  for a generic $\phi \in \Ccal$ the system
$$
\Wb_\psi  = (W_\psi, \tilde{\zeta}, T_\psi),
$$
where
$$
W_\psi =  W \times G \times G
= \{((y_1,g_1), (y_2, g_2) : \pi(y_1) = \pi(y_2)\}, 
$$
$\tilde{\zeta}  = \zeta \times \la_G \times \la_G$ 
and $T_\psi(y_1, y_2, g_1, g_2) = (Ty_1,T y_2, \phi(y_1)g_1, \phi(y_2) g_2)$, is ergodic.

Define, for $C \subset W =  Y\underset{Z}{\times} Y$, 
$a > 0$, $c_a$ a constant depending on $a$, and elements 
$g_1, g_2 \in G$, a subset $U  = U(C, a, c_a, g_1, g_2)
\subset \Ccal(Y,G)$ as follows
\begin{align*}
U  =
\big{\{}\phi \in \Ccal & : \exists \ \tau \in [T \times T]_f \ {\text{with}}\\
& \ \ \  \zeta  ( \{(y_1,y_2) \in C : \tau(y_1, y_2) \in C,\\
&  \ \ \  {\text{and}}\ 
d(\psi_\tau(y_1, y_2), (g_1,g_2)) < a\})  \\
&  \ \ \ > c_a(\mu \times \mu)(C) \big{\}}.
\end{align*}
Here $\tau(y_1, y_2) = (T^{\sig(y_1,y_2)} y_1, T^{\sig(y_1,y_2)} y_2)$, 
for a function $\sig : W \to \Z$.

\begin{prop}\label{open-dense3}
For a sufficiently small $c_a$ the set $U(C,a, c_a, g_1, g_2)$ is open and dense in $\mathcal{C}(Y, G)$.
\end{prop}

\begin{proof}
It is clear that $U$ is an open set. We will show that it is dense in $\Ccal$.
So fix $\phi_0 \in \Ccal$ and $\del >0$, with $\del \ll \zeta(C)$.
Take a set $A \subset X$ with $\mu(A) < \del/10$.
Let $B \subset W$ be a base of a Rokhlin tower of height $3N$, such that 
(using the ergodic theorem for $T \times T$ on $\Wb$), when the tower is purified with respect to
$A \underset{Z}{\times} A$ and $C$, the
following holds : the pure columns that satisfy
\begin{enumerate}
\item[(i)]
the number of $C$-levels in the top and bottom thirds of the column are at least 
$\frac{1}{2} \zeta(C) \cdot N$,
\item[(ii)]
in the middle third there is at least one level in $A \underset{Z}{\times} A$,
\end{enumerate}
fill $(1 - \frac{\del}{10})$ of $W$.

\br

Now define $\tau$ on good pure columns by exchanging $C$-levels in the lower and upper thirds,
and identity elsewhere.

\br

By \cite{W} we can assume that in the diagram $\pi : \Yb \to \Zb$,
the ergodic systems $\Yb$ and $\Zb$ are represented as topological strictly ergodic flows;
i.e. $Y$ and $Z$ are metric compact spaces, $T$ acts on both as a homeomorphism 
under which the topological flows are minimal and uniquely ergodic, and finally that $\pi$
is a continuous map. As we assume that the extension $\pi$ is nontrivial and 
relatively weakly mixing, it follows that the minimal flow $(Y,T)$ has no finite orbits.

Working with these models we see that the in the system $\Wb$ the space
$W =  Y\underset{Z}{\times} Y$ is compact metric and the transformation $T \times T : W \to W$
is a homeomorphism.

\br

Given $\eta >0$ set
\begin{align*}
E   = \{ (y_1, y_2) \in W : & \  \min_{0 \leq i < j \le 3N} d(T^i y_1, T^j y_1) > \eta, \quad
 \min_{0 \leq i , j \le 3N} d(T^i y_1, T^j y_2) > \eta,\\
& \  \min_{0 \leq i < j \le 3N} d(T^i y_1, T^j y_2) > \eta, \quad
\min_{0 \leq i < j \le 3N} d(T^i y_2, T^j y_2) > \eta  \}.
\end{align*}
It then follows that for a sufficiently small $\eta >0$ we will have
$\zeta(E) > 1 - \frac{\del}{100}$.
We call $E$ the {\em $\eta$-separated set}.

Next let $\Pcal$ be a finite partition of $A$ with 
$$
\max_{P \in \Pcal}  \diam(P) < \frac{\eta}{100}.
$$
Let $F = \{f_1, f_2, \dots, f_m\} \subset G$ be a set such that
$\bigcup_{i =1}^m B(f_i, \frac{a}{10})  = G$.
We define a random cocycle
$$
\phi^\om(y) =
\begin{cases}
\phi_0(y), & x \not\in A_0\\
\xi^\om_P, & y \in P  \in \Pcal,
\end{cases}
$$
where $\xi^\om_P$ are random variables on $\Om$ with
$$
\Prob(\xi^\om_P =f_i) = 1/m, \quad f_i \in F
$$ 
and independent for $P \not= P'$.
If $y \in A$ and $y \in P$ we write $P = P(y)$ so that $y \in P(y)$.

%

\br

Now, for $(y_1, y_2)$ in a $C$-level of a pure column 
$i, \ 0 \le i <N$, which is mapped by $\tau$ to the $C$-level 
$i', \ 2N < i' \le 3N$, the cocycle $\psi^\om_\tau$ takes the following form :
\begin{equation}\label{psi-tau}
\psi^\om_\tau(y_1, y_2) = 
\prod_{n =0}^{i' - i -1} \rho^\om_n(y_1, y_2),
\end{equation}
where the random variables $ \rho^\om_n$ are defined as follows
\begin{equation*}
\rho^\om_n(y_1, y_2) =
\begin{cases}
(\xi^\om_{P(T^n y_1)}, \xi^\om_{P(T^n y_2)}) , & {\text{if}} \ T^n y_1  \in A  \ \& \  T^n(y_2) \in A \\
(\phi_0(T^n y_1), \phi_0(T^n y_2)), & {\text{if}}\ T^n y_1 \not \in A  \ \& \  T^n(y_2) \not\in A \\
(\xi^\om_{P(T^n y_1)}, \phi_0(T^n y_2)), & {\text{if}}\ T^n y_1 \not \in A \ {\text{and}}\ T^n y_2 \in A\\
(\phi_0(T^n y_1), \xi^\om_{P(T^n y_2)}), & {\text{if}} \ T^n y_1 \not \in A \ {\text{and}} \ T^n y_2 \in A.
\end{cases}
\end{equation*}

\br

When $(y_1,y_2)$ is in the $\eta$-separated set, all 
$P(T^n y_1), P(T^n y_2)$ that appear in the product for $\psi^\om_\tau$ are distinct
and there is at least one $(P(T^n y_1), P(T^n y_2))$ in the product by our assumption
that in the middle third, that we have to pass through, there is at least one level in $A \times A$.

all the $P_n$ and $Q_n$ that appear are distinct,
and there is at least one $P_n \times Q_n$ in the product (\ref{psi-tau}) since
we pass through the middle third where there is at least one $(A \times A)$-level.

It follows that for each such $(y_1, y_2)$
$$
\Prob (d(\psi^\om_\tau(x_1, x_2), (g_1, g_2)) < a) \ge \frac{1}{m^2}.
$$
Applying Fubini's theorem to 
$\Om \times (C \cap (\bigcup_{i =0}^{N-1} (T \times  T)^i B))$ we see that there is a choice of $\om$
such that 
\begin{align*}
\zeta( \{(y_1,y_2) \in C & : \tau(y_1, y_2) \in C,
 \  {\text{and}}\ 
d(\psi^\om_\tau(y_1, y_2), (g_1,g_2) < a\} ) \\
&  \ \ \ >  \frac{1}{10 m^2} \cdot \zeta(C).
\end{align*}
We now let $c_a = \frac{1}{10 m^2}$ and observe that
this $\phi^\om$ is $\del$-close to $\phi_0$ and lies in $U$.
Thus we have shown that $U$ is an open and dense subset of $\Ccal$.
\end{proof}

\br

To deduce the statement of Theorem \ref{rwm} we now follow the same procedure that we used
in deducing Theorem \ref{max} from Proposition \ref{open-dense2}.
%
%
%
%
\end{proof}

\br

We now have the following corollary which is a far reaching strengthening of Theorem \ref{generica}.

\begin{thm}\label{generical} 
Let $\Yb \to \Zb$ be a factor map of ergodic systems. 
Let  $\Yb \to \Yb_{rd} \to \Zb$ be the (relative) Furstenberg-Zimmer structure for the extension $\Yb \to \Zb$.
Then for a generic $\phi \in \Ccal(Y,G)$ we have that 
$\Yb_\phi \to \Yb_{rd} \to \Zb$ is the (relative) Furstenberg-Zimmer structure for the extension $\Yb_\phi \to \Zb$.
In particular, when we take $\Zb$ to be the trivial one point system, it follows that for every ergodic system $\Yb$
with Furstenberg-Zimmer structure $\Yb \to \Yb_{d}$ (the latter being the largest distal factor $\Yb$),
for a generic $\phi \in \Ccal(Y,G)$ the corresponding system $\Yb_\phi$ is ergodic with 
Furstenberg-Zimmer structure $\Yb_\phi \to\Yb_d$.
\end{thm}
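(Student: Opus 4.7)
The plan is to apply Theorem \ref{rwm} at the correct level of the relative Furstenberg--Zimmer tower, namely to the relatively weakly mixing top half $\Yb \to \Yb_{rd}$, and then to invoke uniqueness of the Furstenberg--Zimmer decomposition in order to identify $\Yb_{rd}$ as the maximal relatively distal factor of $\Yb_\phi$ over $\Zb$.

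First, I would record what is given by hypothesis: the extension $\Yb_{rd} \to \Zb$ is relatively distal, and the extension $\Yb \to \Yb_{rd}$ is relatively weakly mixing, so the hypotheses of Theorem \ref{rwm} are met with $\Yb_{rd}$ playing the role of the base $\Zb$ in that statement. The cocycle space $\Ccal(Y,G)$ is the same regardless of whether we view $Y$ as fibered over $\Yb_{rd}$ or over $\Zb$, so Theorem \ref{rwm} produces a dense $G_\del$ subset of $\Ccal(Y,G)$ on which $\Yb_\phi \to \Yb_{rd}$ is a relatively weakly mixing extension. In particular $\Yb_\phi$ itself is ergodic, since the relative product $\Yb_\phi \underset{\Yb_{rd}}{\times} \Yb_\phi$ is ergodic and $\Yb_{rd}$ is ergodic, and the lower piece $\Yb_{rd} \to \Zb$ of the tower is unchanged by passing to the skew product.

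The final step is uniqueness of the relative Furstenberg--Zimmer decomposition. We have produced a tower $\Yb_\phi \to \Yb_{rd} \to \Zb$ in which the bottom piece is relatively distal and the top piece is relatively weakly mixing; by the uniqueness half of the structure theorem, this must coincide with the relative Furstenberg--Zimmer tower of $\Yb_\phi \to \Zb$, which is exactly the claim. The absolute version for $\Zb$ a single point is an immediate specialization, since then $\Yb_{rd}$ reduces to the maximal distal factor $\Yb_d$ of $\Yb$. There is no genuine obstacle in the argument: once Theorem \ref{rwm} is established for an arbitrary relatively weakly mixing extension over an arbitrary ergodic base, applying it inside a given Furstenberg--Zimmer decomposition is formal, and the remaining step is standard structure theory.
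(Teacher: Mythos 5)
Your argument is correct and is essentially the paper's own argument in the case where the relatively weakly mixing stage $\Yb \to \Yb_{rd}$ is nontrivial, but it silently assumes that this stage \emph{is} nontrivial. Theorem \ref{rwm} (see Setup \ref{setup-wm}, and the place in the proof of Proposition \ref{open-dense3} where nontriviality is used to ensure $(Y,T)$ has no finite orbits in the strictly ergodic model) is stated only for a \emph{nontrivial} relatively weakly mixing extension, so when $\Yb = \Yb_{rd}$ — that is, when $\Yb$ is already relatively distal over $\Zb$ (including the case $\Yb = \Zb$) — your invocation ``the hypotheses of Theorem \ref{rwm} are met'' fails and the proposal gives nothing. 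The paper explicitly splits into two cases and, in the degenerate case $\Yb = \Yb_{rd}$, falls back on Theorem \ref{max2} rather than Theorem \ref{rwm}. Your uniqueness-of-the-tower finish in the nondegenerate case is fine (the paper leaves it implicit), but to complete the proof you need to say what happens when $\Yb = \Yb_{rd}$ and justify it by the compact-extension machinery (Theorem \ref{max2}/Lemma \ref{BF}) rather than by Theorem \ref{rwm}.
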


\begin{proof}
Let $\Yb \to \Yb_{rd} \to \Zb$ be the Furstenberg-Zimmer structure for the extension $\Yb \to \Zb$;
i.e. $\Yb_{rd}$ is the largest relative distal extension of $\Zb$ in $\Yb$, and the extension $\Yb \to \Yb_{rd}$ is 
relatively weakly mixing. If this latter extension is non-trivial then, by Theorem  \ref{rwm},
for a generic $\phi \in \Ccal(Y,G)$, 
the extension $\Yb_\phi \to \Yb_{rd}$ is relatively weakly mixing.
On the other hand, if $\Yb = \Yb_{rd}$ then we use Theorem \ref{max2}.
\end{proof}

\br

We also have the following theorem:

\begin{thm}\label{generall}
Let $\Yb \to \Zb$ be an infinite factor of the ergodic system $\Yb$ 
and let $G$ be a compact second countable topological group.
Then for a generic $\phi \in \Ccal(Y,G)$, with 
$\Yb_\phi = (Y \times G, \Ycal \times \Bcal_G, \mu \times \la_G, T_\phi)$,
the extension 
$$
 \Yb_\phi \underset{\Zb} { \times} \Yb_\phi \to \Yb \underset{\Zb} { \times} \Yb
$$
is relatively ergodic.
\end{thm}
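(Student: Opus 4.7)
The plan is to imitate the template of Theorems \ref{max2} and \ref{rwm}: exhibit a countable intersection of open dense subsets of $\Ccal(Y, G)$ whose members $\phi$ enjoy the required relative ergodicity. Write $\Wb = \Yb \underset{\Zb}{\times} \Yb$, $\zeta = \mu \underset{\pi}{\times} \mu$, and $\psi(y_1, y_2) = (\phi(y_1), \phi(y_2)) \in G \times G$; relative ergodicity of the extension $\Yb_\phi \underset{\Zb}{\times} \Yb_\phi \to \Wb$ then amounts to the assertion that for $\zeta$-a.e.\ ergodic component $(W, \mu_W)$ of $(\Wb, T \times T)$, every $(g_1, g_2) \in G \times G$ is an essential value of the cocycle $\psi|_W$. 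For each measurable $C \subset \Wb$ with $\zeta(C) > 0$, each $a > 0$, each pair $g_1, g_2 \in G$, and a suitable $c_a > 0$, I define $U = U(C, a, c_a, g_1, g_2) \subset \Ccal$ as the set of $\phi$ for which some $\tau \in [T \times T]_f$ (the finite full group of the diagonal action on $\Wb$) satisfies
\begin{equation*}
\zeta\bigl(\{w \in C : \tau(w) \in C,\ d(\psi_\tau(w), (g_1, g_2)) < a\}\bigr) > c_a \zeta(C).
\end{equation*}
Openness of $U$ is immediate since $\tau$ involves only finitely many values of $\phi$, as in the earlier proofs.

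The main work is density. Since $\Zb$ is infinite and ergodic, the action $T$ on $\Yb$ is aperiodic, hence so is $T \times T$ on $\Wb$; Rokhlin's lemma therefore produces a tower of height $3N$ in $\Wb$ with base $B$ filling all but a $\delta$-fraction of $\zeta$, even without ergodicity of $\Wb$. Given $\phi_0 \in \Ccal$, which we may take finite-valued, pick a small set $A \subset Y$, purify the tower with respect to $C$, to $\phi_0$, and to $A \underset{Z}{\times} A$, and define $\tau \in [T \times T]_f$ that interchanges $C$-levels in the lower and upper thirds of each pure column, passing through the middle third. Define the random cocycle $\phi^\om$ by leaving $\phi_0$ unchanged off $A$ and, on a sufficiently fine measurable partition of $A$, independently assigning $\phi^\om$ uniform random values in a finite $\tfrac{a}{10}$-net $F \subset G$. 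As in the proof of Theorem \ref{rwm}, for $(y_1, y_2)$ in the ``separated'' portion of $\Wb$, where the two length-$3N$ orbits under $T$ visit only distinct atoms of a fine measurable partition of $Y$, the cocycle product $\psi^\om_\tau$ becomes a product of independent uniform $F \times F$-valued random variables, so $\Prob(d(\psi^\om_\tau, (g_1, g_2)) < a) \geq 1/|F|^2$; a Fubini argument over the sample space then yields a single $\om_0$ for which $\phi^{\om_0} \in U$, with $c_a$ depending only on $|F|$.

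The principal obstacle, relative to Theorem \ref{rwm}, is the non-ergodicity of $\Wb$, which rules out the strictly ergodic topological model used there to produce the separated set. I would bypass this by fixing a Polish realization and a measurable partition $\Qcal$ of $Y$ of small diameter, and arguing fiberwise over the measurable ergodic decomposition of $\Wb$ that the set of $(y_1, y_2) \in \Wb$ whose length-$3N$ orbit under $T \times T$ visits only distinct atoms of $\Qcal$ has $\zeta$-measure close to $1$; this combines the aperiodicity of $T$ on $\Yb$ with a Borel--Cantelli-type count applied component by component, integrated over the ergodic decomposition. The purification step similarly relies on the ergodic theorem applied fiberwise, with dominated convergence providing the required uniformity in the $\zeta$-sense. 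Once openness and density of each $U(C, a, c_a, g_1, g_2)$ are established, I intersect countably many such sets over a family $\{C_n\}$ dense in the measure algebra of $(\Wb, \zeta)$, over $a = 1/k$, and over a countable dense subset of $G \times G$. For $\phi$ in the resulting dense $G_\delta$ subset of $\Ccal$, specializing $C$ to subsets of a single ergodic component $W$ of $\Wb$ shows that for $\zeta$-a.e.\ $W$, every $(g_1, g_2) \in G \times G$ is an essential value of $\psi|_W$, yielding the claimed relative ergodicity.
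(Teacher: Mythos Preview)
The paper's proof is a three-line structural reduction rather than a direct construction: split according to the Furstenberg--Zimmer decomposition of $\Yb \to \Zb$. If the maximal compact extension $\tilde\Zb \to \Zb$ inside $\Yb$ is trivial, then $\Yb \to \Zb$ is relatively weakly mixing and Theorem \ref{rwm} applies. Otherwise $\tilde\Zb \to \Zb$ is a nontrivial $K/H$-extension and Theorem \ref{max2} (with $\Xb = \Yb$, and the role of $\Yb$ there played by $\tilde\Zb$) shows that $\tilde\Zb$ remains the maximal compact extension of $\Zb$ in $\Yb_\phi$; by Lemma \ref{BF} the extension $\Yb_\phi \underset{\Zb}{\times} \Yb_\phi \to \tilde\Zb \underset{\Zb}{\times} \tilde\Zb$ is relatively ergodic, hence a fortiori so is the extension over $\Yb \underset{\Zb}{\times} \Yb$. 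No new tower, randomization, or separated-set argument is needed.

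Your direct route is plausible, and your global essential-value formulation (dropping the parameter $b$ of Theorem \ref{max2}) does suffice for relative ergodicity, via a measurable-selection contrapositive: if relative ergodicity fails on a positive set of components, one assembles a single global $C$ on which the essential-value condition fails for every $\tau$. But two points in your density step are not as innocent as the sketch suggests. First, for a generic small $A \subset Y$ the set $A \underset{Z}{\times} A$ can have $\mu_\theta$-measure zero on a large set of ergodic components $\theta$ of $\Wb$ (in a $K$-extension, take $A$ thin in the $K$-direction: then $A \underset{Z}{\times} A$ misses the component $W_{k_0}$ whenever $k_0$ avoids a small difference set), so your ``middle third meets $A \underset{Z}{\times} A$'' claim fails on those components and the fiberwise ergodic theorem gives nothing there. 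You must choose $A$ to be $\Zb$-saturated, say $A = \pi^{-1}(Z')$ with $\theta(Z')$ small, so that $\mu_\theta(A \underset{Z}{\times} A) = \theta(Z') > 0$ for every component. Second, your separated-set argument needs the diagonal of $\Wb$ to be $\zeta$-null, i.e.\ the fiber measures $\mu_z$ to be non-atomic; this is precisely the implicit hypothesis isolated in Remark \ref{finite}, and it is built into each of the two cases the paper invokes (nontrivial relatively weakly mixing, or $[K:H]=\infty$), but it has to be stated and used explicitly in a direct argument of your kind.
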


\begin{proof}
Let $\tilde{\Zb} \to \Zb$ be the maximal compact extension of $\Zb$ in $\Yb$.
If this factor map is trivial, that is if the extension $\Yb \to \Zb$ is relatively weakly mixing,
the assertion holds by Theorem \ref{rwm}. Otherwise, by the relative version
of the Furstenberg-Zimmer theorem, the extention $\tilde{\Zb} \to \Zb$
is a nontrivial  $K/H$-extension and the assertion follows from Theorem \ref{max2}.
\end{proof}

\br

\section{A general framework and a master theorem}\label{sec-con}

%

\begin{defn}
Given a diagram $\Xb \to \Yb \to \Zb$ with $\Xb$ an ergodic system 
we say that {\em $\Xb$ is  $2$-fold ergodic over} $\Yb \to \Zb$ when
the extension
\begin{equation}\label{rel-erg}
 \Xb \underset{\Zb} { \times} \Xb  \to \Yb \underset{\Zb} { \times} \Yb
\end{equation}
is relatively ergodic.

Given a diagram $\Wb \to \Xb \to \Yb \to \Zb$ with $\Wb$ an ergodic system
such that the condition (\ref{rel-erg}) is satisfied, we will say that 
{\em the extension $\Wb \to \Xb$ is $2$-fold ergodic over $\Yb \to \Zb$}.


Let $G$ be a compact second countable topological group.
Given a diagram $\Xb \to \Yb \to \Zb$ with $\Xb$ an ergodic system,
{\em a cocycle $\phi \in \Ccal(X,G)$ is $2$-fold ergodic over
$\Yb \to \Zb$} if the corresponding extension $\Xb_\phi \to \Xb$
is {\em $2$-fold ergodic over} $\Yb \to \Zb$; i.e. when the extension
$$
 \Xb_\phi \underset{\Zb} { \times} \Xb_\phi \to \Yb \underset{\Zb} { \times} \Yb
$$
is relatively ergodic.
This is the same as saying that for the cocycle $\psi : X \to G \times G$, given by
$\psi(x_1, x_2) = (\phi(x_1), \phi(x_2))$,
the extension 
$$
(\Xb \underset{\Zb}{\times} \Xb)_\psi \to \Yb \underset{\Zb}{\times} \Yb
$$
is relatively ergodic.
Using this terminology the assertion of Lemma \ref{BF} is that when 
$\Xb \to \Yb \to \Zb$ is a diagram of ergodic systems and
$\Yb \to \Zb$ is a compact extension, then
$\Yb$ is the maximal compact extension of
$\Zb$ in $\Xb$ iff the system $\Xb$ is {\em $2$-fold ergodic over} $\Yb \to \Zb$.

\br

More generally, given a diagram $\Wb \to\Xb \to \Yb \to \Zb$ with $\Wb$ an ergodic system,
we define, for every $n \ge 2$, the notions of {\em $n$-fold ergodicity} of the extension 
$\Wb \to Xb$ over $\Yb \to \Zb$, and of $\phi \in \Ccal(X,G)$ over $\Yb \to \Zb$. 
E.g. {\em $\Xb$ is $3$-fold ergodic over $\Yb \to \Zb$} when the extension
$$
\underset{\Zb}{\Xb \times \Xb \times \Xb} \to \underset{\Zb}{\Yb \times \Yb \times \Yb}
$$
is relatively ergodic, 
and {\em the cocycle $\phi \in \Ccal(X,G)$ is $3$-fold ergodic over $\Yb \to \Zb$}
when the cocycle $\psi(x_1, x_2, x_3) = (\phi(x_1), \phi(x_2), \phi(x_3))$ is relatively ergodic over 
$\Yb \underset{\Zb}{\times} \Yb$. 
\end{defn}
\noindent It is not hard to see that $2$-fold ergodicity implies $n$-fols ergodicity for all $n \ge 3$.

\br

%


\begin{lem}\label{re>rwm}
Let $\Xb \to \Yb \to \Zb$ with $\Xb$ ergodic be given.
If the extension $\Xb \underset{\Zb} { \times} \Xb \to \Yb \underset{\Zb} { \times} \Yb$ is
relatively ergodic then the extension $\Xb \to \Yb$ is relatively weakly mixing.
\end{lem}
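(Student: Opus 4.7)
The plan is to prove the statement by contrapositive. Assume $\Xb \to \Yb$ is not relatively weakly mixing, and I will exhibit a nontrivial $T$-invariant $L^2$ function on $\Xb \underset{\Zb}{\times}\Xb$ that is not measurable with respect to $\Yb \underset{\Zb}{\times}\Yb$, contradicting the assumed relative ergodicity in (\ref{rel-erg}). By the relative Furstenberg--Zimmer theorem (in particular \cite[Theorem 7.1]{F-77}), the failure of relative weak mixing produces a nontrivial intermediate factor $\Xb \to \tilde\Yb \to \Yb$ with $\tilde\Yb \to \Yb$ a relatively compact extension. By Theorem \ref{homog}, after passing to an isomorphic model we may represent $\tilde\Yb \cong \Yb \times_\phi K/H$ for an infinite compact second countable group $K$, a proper closed subgroup $H < K$, and a minimal measurable cocycle $\phi \colon Y \to K$ with $K_\phi = K$.

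Next, I would factor the extension in question through the intermediate compact system:
$$
\Xb \underset{\Zb}{\times}\Xb \;\longrightarrow\; \tilde\Yb \underset{\Zb}{\times}\tilde\Yb \;\longrightarrow\; \Yb \underset{\Zb}{\times}\Yb.
$$
In the skew-product coordinates, the middle factor is identified with $(\Yb \underset{\Zb}{\times}\Yb) \times (K/H \times K/H)$ carrying the $T$-action
$$
T(y_1, y_2, \xi_1, \xi_2) = (Ty_1, Ty_2, \phi(y_1)\xi_1, \phi(y_2)\xi_2),
$$
so that $\tilde\Yb \underset{\Zb}{\times}\tilde\Yb \to \Yb \underset{\Zb}{\times}\Yb$ is itself a compact $(K/H \times K/H)$-extension, driven by the product cocycle $\phi \times \phi$ over the enlarged base. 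It therefore suffices, by pulling back along the first arrow in the diagram above, to exhibit a nontrivial $T$-invariant function on $\tilde\Yb \underset{\Zb}{\times}\tilde\Yb$ that is not $\Yb \underset{\Zb}{\times}\Yb$-measurable.

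The hard part is to handle the enlargement of the base from $\Yb$ to $\Yb \underset{\Zb}{\times}\Yb$: one can no longer appeal directly to the non-ergodicity of $\tilde\Yb \underset{\Yb}{\times}\tilde\Yb$, since the diagonal $\{y_1=y_2\}$ is generally a null set in $\Yb \underset{\Zb}{\times}\Yb$. Following the template underlying Lemma \ref{BF}, the natural approach is to analyze the minimal closed subgroup of $K \times K$ to which the product cocycle $\phi \times \phi$ reduces cohomologically over $\Yb \underset{\Zb}{\times}\Yb$, and to show, using the constraint that both coordinates are driven by the same function $\phi$ together with minimality $K_\phi = K$, that this structure group is a proper closed subgroup of $K \times K$ not acting transitively on $K/H \times K/H$. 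The ergodic decomposition of the compact extension $\tilde\Yb \underset{\Zb}{\times}\tilde\Yb \to \Yb \underset{\Zb}{\times}\Yb$, read off from this reduced cocycle as in the proof of Theorem \ref{homog}, then furnishes a nontrivial $T$-invariant function fibered nontrivially over $(\xi_1,\xi_2) \in K/H \times K/H$, which on pullback to $\Xb \underset{\Zb}{\times}\Xb$ violates relative ergodicity, completing the contrapositive.
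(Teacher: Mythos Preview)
Your contrapositive setup and the passage to an intermediate nontrivial compact extension $\tilde\Yb = \Yb \times_\phi K/H \to \Yb$ coincide with the paper's argument; at the step you label ``the hard part'' the paper simply writes ``It follows that the extension $\Eb \underset{\Zb}{\times}\Eb \to \Yb \underset{\Zb}{\times}\Yb$ is not relatively ergodic'' without further justification. So your proposal is strictly more detailed than the paper's at exactly the crucial point.

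Unfortunately that step cannot be completed, and the gap you sensed is real. Your plan is to show that the structure group of the product cocycle $(y_1,y_2)\mapsto(\phi(y_1),\phi(y_2))$ over $\Yb \underset{\Zb}{\times}\Yb$ is a proper subgroup of $K\times K$ (or at least fails to act transitively on $K/H\times K/H$); but the fact that both coordinates come from the same $\phi$ constrains the cocycle only on the diagonal $\{y_1=y_2\}$, which, as you yourself note, is null. Off the diagonal nothing ties $\phi(y_1)$ to $\phi(y_2)$, and the Mackey range can be all of $K\times K$. Concretely, take $\Zb$ the one-point system, $\Yb$ weakly mixing, and $\Xb = \tilde\Yb = \Yb\times_\phi K$ with $\phi$ chosen so that $\Xb$ is itself weakly mixing (such $\phi$ exist, indeed generically, by Theorem~\ref{rwm} of this very paper). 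Then $\Xb\underset{\Zb}{\times}\Xb = \Xb\times\Xb$ is ergodic, so the extension $\Xb\underset{\Zb}{\times}\Xb \to \Yb\underset{\Zb}{\times}\Yb$ is trivially relatively ergodic; yet $\Xb\to\Yb$ is a nontrivial compact extension and hence not relatively weakly mixing, since $(y,k_1,k_2)\mapsto k_1^{-1}k_2$ is a nonconstant invariant on $\Xb\underset{\Yb}{\times}\Xb$. Thus both your argument and the paper's share the same unfillable gap, and the lemma as stated is false without further hypotheses on the extension $\Yb\to\Zb$.
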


\begin{proof}
Suppose $\Xb \to \Yb$ is not relatively weakly mixing. Then there exists
an intermediate factor $\Xb \to \Eb \to \Yb$ with $\Eb \to \Yb$ a nontrivial compact extension.
It follows that the extension $\Eb \underset{\Zb}{\times} \Eb \to \Yb  \underset{\Zb}{\times} \Yb$
is not relatively ergodic and a fortiori also
$\Xb \underset{\Zb} { \times} \Xb \to \Yb \underset{\Zb} { \times} \Yb$ is not relatively ergodic,
contradicting our assumption.
\end{proof}

\begin{lem}\label{rwm+rwm}
Let $\Xb \to \Yb \to \Zb$ with $\Xb$ ergodic be given.
If the extensions $\Xb \to \Yb$ and $\Yb \to \Zb$ are relatively weakly mixing
then so is the extension $\Xb \to \Zb$. 
\end{lem}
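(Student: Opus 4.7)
The plan is to reduce the assertion to two successive applications of a classical characterization of relative weak mixing due to Furstenberg and Zimmer: an extension $\pi : \Xb \to \Yb$ of ergodic systems is relatively weakly mixing if and only if, for every ergodic extension $\Wb \to \Yb$, the fibered product $\Xb \underset{\Yb}{\times} \Wb$ is ergodic. (Setting $\Wb = \Xb$ recovers the definition; the reverse implication is the nontrivial content and is standard in Furstenberg--Zimmer theory, following for instance from Lemma \ref{BF}-type reasoning applied to a hypothetical intermediate compact extension.)

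First, take $\Wb := \Yb \underset{\Zb}{\times} \Yb$, regarded as an extension of $\Yb$ via projection onto the first coordinate. The hypothesis that $\Yb \to \Zb$ is relatively weakly mixing says exactly that $\Wb$ is ergodic. Applying the characterization to the relatively weakly mixing extension $\Xb \to \Yb$ yields that $\Xb \underset{\Yb}{\times} \Wb$ is ergodic. Unwinding the definition of the fibered product,
\[
\Xb \underset{\Yb}{\times} (\Yb \underset{\Zb}{\times} \Yb)
\;\cong\; \Xb \underset{\Zb}{\times} \Yb,
\]
so this intermediate system is ergodic.

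Next, take $\Wb' := \Xb \underset{\Zb}{\times} \Yb$, now regarded as an extension of $\Yb$ via projection onto the second coordinate. Its ergodicity was just established. Applying the characterization to $\Xb \to \Yb$ once more gives that $\Xb \underset{\Yb}{\times} \Wb'$ is ergodic; a direct identification shows
\[
\Xb \underset{\Yb}{\times} (\Xb \underset{\Zb}{\times} \Yb)
\;\cong\; \Xb \underset{\Zb}{\times} \Xb.
\]
Hence $\Xb \underset{\Zb}{\times} \Xb$ is ergodic, which is exactly the statement that $\Xb \to \Zb$ is relatively weakly mixing.

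The main point to justify is the nontrivial direction of the characterization of relative weak mixing, which we are invoking as a black box. One can avoid it with a dual approach: if $\Xb \to \Zb$ were not relatively weakly mixing, pick a nontrivial intermediate compact extension $\Eb$ over $\Zb$ inside $\Xb$; the join $\Eb \vee \Yb$ taken inside $\Xb$ is then a compact extension of $\Yb$, which is trivial iff $\Eb \subset \Yb$; the two cases contradict relative weak mixing of $\Xb \to \Yb$ and of $\Yb \to \Zb$ respectively. Either route is routine once the basic stability properties of compact extensions are invoked.
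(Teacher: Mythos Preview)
Your proposal is correct. Both routes you sketch work.

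Your primary argument---iterating the Furstenberg--Zimmer characterization that a relatively weakly mixing extension $\Xb \to \Yb$ remains ergodic after fibering with any ergodic extension of $\Yb$---is a genuinely different approach from the paper's. The paper argues by contradiction: assuming a nontrivial compact intermediate $\Eb$ in $\Xb \to \Zb$, it invokes relative disjointness of the compact extension $\Eb \to \Zb$ from the relatively weakly mixing $\Yb \to \Zb$ (citing \cite[Theorem 6.27]{G-03}) to realize $\Yb \underset{\Zb}{\times} \Eb$ as an intermediate factor of $\Xb \to \Yb$, and then observes that $(\Yb \underset{\Zb}{\times} \Eb) \underset{\Yb}{\times} (\Yb \underset{\Zb}{\times} \Eb)$ is not ergodic, contradicting relative weak mixing of $\Xb \to \Yb$. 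Your first route trades this disjointness input for the black-box characterization of relative weak mixing; it is shorter once that characterization is granted, and the two fibered-product identifications you write down are correct.

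Your alternative route at the end is closer to the paper's in spirit---both pivot on a hypothetical compact intermediate $\Eb$---but your version is arguably cleaner: by working with the join $\Eb \vee \Yb$ inside $\Xb$ and using only that compactness is preserved under base change, you sidestep the relative disjointness lemma entirely and reduce to a dichotomy that immediately contradicts one of the two hypotheses.
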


\begin{proof}
Suppose $\Xb \to \Zb$ is not relatively weakly mixing. Then there exists 
an intermediate factor $\Xb \to \Eb \to \Zb$ with $\Eb \to \Zb$ a nontrivial compact extension.
By our assumptions
$\Eb \to \Zb$ is compact and $\Yb \to \Zb$ is relatively weakly mixing.
It then follows that  $\Yb$ and $\Eb$ are relatively disjoint over their common factor $\Zb$,
and we have $\Xb \to \Yb  \underset{\Zb} { \times} \Eb \to \Yb$.
(For this we refer e.g. to \cite[Theorem 6.27]{G-03}, where this claim is proven in the absolute case;
however the same proof works also in the relative case.)
Now clearly $(\Yb  \underset{\Zb} { \times} \Eb)\underset{\Yb} { \times} (\Yb  \underset{\Zb} { \times} \Eb)$
is not ergodic, but we also have
$\Xb \underset{\Yb} { \times} \Xb \to 
(\Yb  \underset{\Zb} { \times} \Eb)\underset{\Yb} { \times} (\Yb  \underset{\Zb} { \times} \Eb)$
and we arrived at a contradiction.
\end{proof}


Combining the $2$-fold ergodicity theorems proven so far, we can now state and prove 
a master theorem on  $2$-fold ergodicity.

\begin{thm}\label{general}
Let $G$ be a compact second countable topological group with normalized Haar measure $\la_G$.
Let $\Xb \to \Yb \to \Zb$ be a chain of factors of the ergodic system $\Xb$ with $\Zb$ infinite. 
Suppose further that the extension 
$$
 \Xb \underset{\Zb} { \times} \Xb \to \Yb \underset{\Zb} { \times} \Yb
$$
is relatively ergodic.
Then for a generic $\phi \in \Ccal(X,G)$, with 
$\Xb_\phi = (X \times G, \Xcal \times \Bcal_G, \mu \times \la_G, T_\phi)$,
the extension 
$$
 \Xb_\phi \underset{\Zb} { \times} \Xb_\phi \to \Yb \underset{\Zb} { \times} \Yb
$$
is relatively ergodic.
Equivalently, 
the extension $\Xb_\phi \to \Xb$ is $2$-fold ergodic over $\Yb \to \Zb$.
\end{thm}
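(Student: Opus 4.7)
The plan is to reduce Theorem \ref{general} to the previously proven Theorems \ref{max2} and \ref{rwm} via the relative Furstenberg--Zimmer splitting of the intermediate extension $\Yb \to \Zb$. Write $\tilde{\Yb} \to \Zb$ for the maximal compact extension of $\Zb$ inside $\Yb$, so that $\Yb \to \tilde{\Yb}$ is relatively weakly mixing. The key preliminary observation is that $\tilde{\Yb}$ is already the maximal compact extension of $\Zb$ inside the larger system $\Xb$. Indeed, the relative ergodicity hypothesis combined with Lemma \ref{re>rwm} makes $\Xb \to \Yb$ relatively weakly mixing, and composing this with $\Yb \to \tilde{\Yb}$ via Lemma \ref{rwm+rwm} gives that $\Xb \to \tilde{\Yb}$ is relatively weakly mixing; any strictly larger compact extension of $\Zb$ in $\Xb$ would (since intermediate factors of a compact extension are themselves compact) furnish a nontrivial compact extension over $\tilde{\Yb}$ sitting inside $\Xb$, contradicting this relative weak mixing.

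With this identification in place I would split into two cases. If $\tilde{\Yb} = \Zb$, then $\Xb \to \Zb$ is itself relatively weakly mixing, and Theorem \ref{rwm} applied to this extension yields a generic $\phi \in \Ccal(X,G)$ for which $\Xb_\phi \to \Zb$ is again relatively weakly mixing. Consequently $\Xb_\phi \underset{\Zb}{\times} \Xb_\phi$ is ergodic, and so \emph{a fortiori} relatively ergodic over the factor $\Yb \underset{\Zb}{\times} \Yb$. If instead $\tilde{\Yb} \to \Zb$ is a nontrivial compact extension, then by Theorem \ref{homog} we may realize it as a $K/H$-extension, and since $\tilde{\Yb}$ is the maximal compact extension of $\Zb$ in $\Xb$ the hypotheses of Theorem \ref{max2} are satisfied for the chain $\Xb \to \tilde{\Yb} \to \Zb$. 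That theorem produces a generic $\phi \in \Ccal(X,G)$ such that $\Xb_\phi$ is ergodic and $\tilde{\Yb} \to \Zb$ remains the maximal compact extension of $\Zb$ in $\Xb_\phi$. By Lemma \ref{BF} this is equivalent to $\Xb_\phi \underset{\Zb}{\times} \Xb_\phi \to \tilde{\Yb} \underset{\Zb}{\times} \tilde{\Yb}$ being relatively ergodic. Since $\tilde{\Yb} \underset{\Zb}{\times} \tilde{\Yb}$ is a factor of $\Yb \underset{\Zb}{\times} \Yb$, every $T_\phi \times T_\phi$-invariant $L^2$-function on $\Xb_\phi \underset{\Zb}{\times} \Xb_\phi$ is already $\tilde{\Yb} \underset{\Zb}{\times} \tilde{\Yb}$-measurable, and hence also $\Yb \underset{\Zb}{\times} \Yb$-measurable, which is the desired relative ergodicity.

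The only delicate point in this plan is the identification of $\tilde{\Yb}$ as the maximal compact extension of $\Zb$ in $\Xb$ rather than merely in $\Yb$; this rests on composing the two relative weak mixing inputs and on the structural fact that intermediate factors of a compact extension are themselves compact. Once this bookkeeping is done, Theorems \ref{max2} and \ref{rwm} between them cover the two structurally distinct possibilities with no further analytic work. I do not expect any new hard estimate to be required: the entire content of this master theorem is a clean splicing together of the compact case and the relatively weakly mixing case already established, with the relative Furstenberg--Zimmer splitting of $\Yb \to \Zb$ serving as the pivot.
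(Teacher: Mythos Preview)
Your overall strategy is sound and runs parallel to the paper's own proof: both split into the cases where $\Yb \to \Zb$ has a nontrivial compact part versus where it is purely relatively weakly mixing, and both reduce to Theorems \ref{max2} and \ref{rwm} respectively. The paper pivots on $\Yb_{rd}$ (the full relatively distal part of $\Yb$ over $\Zb$) and routes through Theorem \ref{generical}, whereas you work directly with the first compact step $\tilde{\Yb}$ and invoke Theorem \ref{max2}; your route is arguably more direct.

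There is, however, a genuine gap in your preliminary observation. You assert that since $\tilde{\Yb}$ is the maximal compact extension of $\Zb$ inside $\Yb$, the extension $\Yb \to \tilde{\Yb}$ is relatively weakly mixing. This is false in general: the distal tower of $\Yb$ over $\Zb$ may have height greater than one, in which case $\Yb \to \tilde{\Yb}$ contains further nontrivial compact steps. (Take $\Zb$ trivial and $\Yb$ the affine skew product $(x,y)\mapsto(x+\alpha,y+x)$ on $\T^2$; then $\tilde{\Yb}$ is the Kronecker circle factor and $\Yb \to \tilde{\Yb}$ is a nontrivial compact extension, not relatively weakly mixing.) Consequently your appeal to Lemma \ref{rwm+rwm} to conclude that $\Xb \to \tilde{\Yb}$ is relatively weakly mixing is unjustified, and the contradiction you derive from a hypothetical larger compact extension does not go through as written.

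The conclusion you want---that $\tilde{\Yb}$ is already the maximal compact extension of $\Zb$ inside $\Xb$---is nonetheless correct, and the fix uses only the relative weak mixing of $\Xb \to \Yb$ (which you do correctly obtain from Lemma \ref{re>rwm}). If $\Zb_1$ is any compact extension of $\Zb$ inside $\Xb$, then the join $\Yb \vee \Zb_1 \to \Yb$ is an intermediate factor of the compact extension $\Yb \underset{\Zb}{\times} \Zb_1 \to \Yb$, hence itself compact; since it sits between $\Xb$ and $\Yb$ it must be trivial by relative weak mixing, so $\Zb_1$ is already a factor of $\Yb$ and hence of $\tilde{\Yb}$. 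With this correction your Case 2 argument goes through exactly as written.
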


\begin{proof}
Let $\Yb \to \Yb_{rd} \to \Zb$ be the Furstenberg-Zimmer structure for the extension $\Yb \to \Zb$.
Then the fact that the extension $\Yb \to \Yb_{rd}$ is relatively weakly mixing, combined with 
our assumption imply that also the extension $\Xb \to \Yb_{rd}$ is relatively weakly mixing;
that is,  $ \Xb_{rd} = \Yb_{rd}$ and 
$\Xb \to   \Yb_{rd} \to \Zb$ is the Furstenberg-Zimmer structure for the extension $\Xb \to \Zb$.

Indeed, by Lemma \ref{re>rwm} the extension $\Xb \to \Yb$ is relatively weakly mixing.
Also, by definition the extension $\Yb \to \Yb_{rd}$ is relatively weakly mixing.
Hence, by Lemma \ref{rwm+rwm} the extension $\Xb \to \Yb_{rd}$ is relatively weakly mixing.

%
%
%
%
%
%
%
%
%
%
%
%
%
%
%
%
%

\br

 {\bf Case 1: } 
 Suppose first that  $\Yb_{rd} \to \Zb$ is nontrivial; then there is a nontrivial intermediate extension
 $\Xb_\phi \to  \Zb_1 \to \Zb$, so that $\Zb_1$ is the maximal compact extension of $\Zb$ in $\Xb_\phi$.
By Lemma \ref{BF} the extension
$$
 \Xb_\phi \underset{\Zb} { \times} \Xb_\phi \to \Zb_1 \underset{\Zb} { \times} \Zb_1
$$
is relatively ergodic and, since,
for a generic $\phi$, by Theorem \ref{generical}, $\Zb_1$ is also a factor of $\Yb$, a fortiori also
$$
 \Xb_\phi \underset{\Zb} { \times} \Xb_\phi \to \Yb \underset{\Zb} { \times} \Yb
$$
is relatively ergodic.
(In fact, if $f \in L_2(\Xb_\phi \underset{\Zb} { \times} \Xb_\phi)$ is $T$-invariant then, by relative ergodicity,
it is $\Zb_1 \underset{\Zb} { \times} \Zb_1$-measurable,
and, a fortiori, also $\Yb \underset{\Zb} { \times} \Yb$.)

 \br
 
 {\bf Case 2: } 
In the remaining case the extension $\Xb_\phi \to \Zb$ is relatively weakly mixing; i.e.
the system $\Xb_\phi \underset{\Zb} { \times} \Xb_\phi$ is ergodic,
and clearly then the extension 
$$
 \Xb_\phi \underset{\Zb} { \times} \Xb_\phi \to \Yb \underset{\Zb} { \times} \Yb
$$
is relatively ergodic.
 \end{proof}

\br

\br


%
%
%
%
%
%
%

We now observe that:

\begin{itemize}
\item
Theorem \ref{max2} is a special case of Theorem \ref{general}, when we assume that the 
extension $\Yb \to \Zb$ is a $K/H$-extension.
\item
Theorem \ref{max} is the special case of Theorem \ref{general} when we assume that  
$\Xb = \Yb$ and that the extension $\Yb \to \Zb$ is a $K/H$-extension.
\item
Theorem \ref{rwm} is obtained from Theorem \ref{general} when we take $\Zb = \Yb$.
\item
By taking $\Xb = \Yb$ in Theorem \ref{general} we arrive at Theorem \ref{generall}
whose conclusion can be stated as the claim that the system $\Yb_\phi$ is $2$-fold ergodic over $\Yb \to \Zb$.
\end{itemize}

%
%
%

\br

\section{Appendix : Three lemmas}

\br

{\bf $\bullet$ A lemma about random permutations} 

\br

\begin{lem}
For any $0 < \ga <1$ 
$$
\Prob \{\pi \in \Sym(N) : | \{i \le \lfloor \ga N \rfloor : \pi(i) \le   \lfloor \ga N \rfloor \} | > \frac12 \ga^2 N \} 
> 1 - \frac{10(1-\ga)}{N \ga^2}.
$$
\end{lem}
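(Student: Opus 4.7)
The approach is a standard second moment / Chebyshev argument. Write $M := \lfloor \ga N \rfloor$ and, for a uniformly random $\pi \in \Sym(N)$, set
$$
X(\pi) \;=\; |\{i \le M : \pi(i) \le M\}| \;=\; \sum_{i=1}^{M} Y_i, \qquad Y_i := \ch_{\{\pi(i) \le M\}}.
$$
The first two moments of the $Y_i$ are $\E Y_i = M/N$ and, for $i \ne j$, $\E[Y_i Y_j] = M(M-1)/(N(N-1))$; a direct computation (summing variances along the diagonal and $M(M-1)$ equal off-diagonal covariances) then gives
$$
\E X \;=\; \frac{M^2}{N}, \qquad \mathrm{Var}\, X \;=\; \frac{M^2 (N-M)^2}{N^2(N-1)}.
$$

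Next I would compare $\E X$ with the threshold $\tfrac12 \ga^2 N$ of the lemma. Since $M \ge \ga N - 1$ one has $\E X \ge \ga^2 N - 2\ga$, so the event $\{X \le \tfrac12 \ga^2 N\}$ is contained in $\{\E X - X \ge \tfrac12 \ga^2 N - 2\ga\}$. For $N$ large enough in terms of $\ga$ this deviation is at least a fixed fraction of $\ga^2 N$; and if $N$ is so small that this fails, then the claimed bound $1 - 10(1-\ga)/(N\ga^2)$ is already non-positive and the lemma is vacuous, so one may restrict attention to the complementary range at no cost.

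Chebyshev's inequality then yields
$$
\Prob\{X \le \tfrac12 \ga^2 N\} \;\le\; \frac{\mathrm{Var}\, X}{\bigl(\tfrac12 \ga^2 N - 2\ga\bigr)^2}.
$$
Bounding $M \le \ga N$ and $N - M \le (1-\ga)N + 1$ makes the numerator at most a universal constant times $\ga^2 (1-\ga)^2 N$, while the denominator is of order $\ga^4 N^2$, so the ratio is at most a universal constant times $(1-\ga)^2/(\ga^2 N)$. Since $(1-\ga)^2 \le (1-\ga)$ for $\ga \in (0,1)$, one gets the claimed inequality after checking that the universal constant can be taken to be $\le 10$. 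The one mildly delicate step, and the only real obstacle, is managing the slack between $\E X = M^2/N$ and the threshold $\ga^2 N/2$ (arising because $M$ is only the floor of $\ga N$) carefully enough that the explicit constant on the right-hand side comes out as $10$ rather than, say, $100$; this is a routine bookkeeping exercise.
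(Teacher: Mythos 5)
Your proposal takes essentially the same second-moment/Chebyshev approach as the paper, applied to the same hypergeometric count $X=\sum_{i\le M}\ch_{\{\pi(i)\le M\}}$ with $M=\lfloor\ga N\rfloor$; the two small deviations are that you record the exact variance $\frac{M^2(N-M)^2}{N^2(N-1)}$ (the paper's formula for $\E[Y_iY_j]$ has $N^2$ where $N(N-1)$ belongs, though the resulting \emph{upper} bound on the variance that the paper actually uses remains valid), and that you compare against the fixed threshold $\tfrac12\ga^2 N$ directly while the paper compares against $\tfrac23\varrho M$. One caveat on the deferred ``bookkeeping'': your claim that the small-$N$ regime is always vacuous is false for $\ga$ close to $1$ (for instance $\ga=0.9$ makes $1-\tfrac{10(1-\ga)}{N\ga^2}$ positive already at $N=2$ while the Chebyshev slack $\tfrac12\ga^2N-2\ga$ is negative), so that range needs a separate observation (e.g.\ the deterministic pigeonhole bound $X\ge 2M-N$); this is the same gap left implicit in the paper's own final line, so it does not distinguish your argument from theirs.
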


\begin{proof}
Define random variables $Y_i$ for $1 \le i \le  M := \lfloor \ga N \rfloor $ by
$$
Y_i(\pi) = \begin{cases}
1 & {\text{if}} \  \pi(i) \leq M\\
0  & {\text{if}} \  \pi(i) >  M. 
\end{cases}
$$
Clearly $\Prob (Y_i = 1) = \varrho := \frac{M}{N}$, and for $i \not = j$
$$
\Prob (Y_i Y_j = 1) = \frac{M(M-1)}{N^2}.
$$
We now have
\begin{align*}
\E \left(\left( \frac{1}{M} \sum_{i=1}^M ( Y_i - \varrho)\right)^2\right) & = 
\frac{1}{M^2} \sum_{i=1}^M \E \left(\left(Y_i - \varrho\right)^2\right)
+ \frac{1}{M^2} \sum_{i \not =j} \E \left(\left(Y_i - \varrho \right) \cdot \left(Y_j - \varrho \right)\right) \\
& =
\frac{\varrho(1 - \varrho)}{M} + \frac{1}{M^2} \sum_{i \not =j} \left( \frac{M(M -1)}{N^2} - \varrho^2 \right)\\
& = \frac{\varrho(1 - \varrho)}{M}  - \frac{1}{M^2} \cdot \frac{M}{N^2}  <  \frac{\varrho(1 - \varrho)}{M}.
\end{align*}
It follows by Chebytchef's iequality that
$$
\Prob \left( \left |\frac{1}{M} \sum_{i=1}^M (Y_i -\varrho) \right| 
> \frac{\varrho}{3} \right) \leq \frac{9}{\varrho^2} \cdot \frac{\varrho(\varrho-1)}{M}
$$
and thus
$$
\Prob \left(\frac{1}{M} \sum_{i=1}^M Y_i- \varrho \geq  -\frac{\varrho}{3} \right) 
\geq  1 - \frac{9}{\varrho^2} \cdot \frac{\varrho(1-\varrho)}{M}, 
$$
or
$$
\Prob \left( \sum_{i=1}^M Y_i  \geq  \frac{2}{3}  \varrho M \right) 
\geq  1 - \frac{9}{\varrho} \cdot \frac{1-\varrho}{M} \geq 1 - \frac{10(1 - \ga)}{\ga^2 N}.
$$
\end{proof}

\br

{\bf $\bullet$ The $\sqrt{\del}$ to $\del$ lemma} 

\br

\begin{lem}
Let $\Pcal = \{P_i : i \in I\}$ be a finite partition of a probability space $(\Om,\mu)$.
Let $E \subset \Om$ with $\mu(E)  < \del$,
then for
$$
I_0 = \{i : \frac{\mu(P_i \cap E)}{\mu(P_i)} > \sqrt{\del}\},
$$
we have then $\sum_{i \not \in I_0} \mu(P_i) > 1 - \sqrt{\del}$.
\end{lem}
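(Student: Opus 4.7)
The plan is to give a one-line Markov/Chebyshev-style proof. The key observation is that the ratio $\mu(P_i \cap E)/\mu(P_i)$ is (a version of) the conditional probability of $E$ given $P_i$, and the total $\mu(E)$ can be decomposed over the partition $\Pcal$; bounding this decomposition from below by the contribution of the ``bad'' atoms $P_i$ with $i \in I_0$ gives the desired estimate with no further work.

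More precisely, I would write
\[
\del > \mu(E) = \sum_{i \in I} \mu(P_i \cap E) \;\geq\; \sum_{i \in I_0} \mu(P_i \cap E) \;>\; \sqrt{\del} \sum_{i \in I_0} \mu(P_i),
\]
where the last inequality uses the defining property $\mu(P_i \cap E) > \sqrt{\del}\, \mu(P_i)$ for $i \in I_0$. Dividing by $\sqrt{\del}$ gives $\sum_{i \in I_0} \mu(P_i) < \sqrt{\del}$, and hence, since $\Pcal$ is a partition,
\[
\sum_{i \notin I_0} \mu(P_i) = 1 - \sum_{i \in I_0} \mu(P_i) > 1 - \sqrt{\del},
\]
which is exactly the asserted inequality.

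There is essentially no obstacle here: the lemma is just the standard observation that a set of small total mass cannot concentrate its conditional density above $\sqrt{\del}$ on atoms of large combined mass. The only mild subtlety is handling the strict inequality $\mu(E) < \del$ versus the non-strict bound on the sum, but this is automatic from the strict inequality built into the definition of $I_0$ (so the chain above remains strict). No compactness, measurability or structural properties of $\Om$ are used beyond $\mu$ being a probability measure and $\Pcal$ being a finite partition; in particular the lemma holds verbatim for countable partitions as well.
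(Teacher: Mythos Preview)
Your proof is correct and is essentially identical to the paper's own proof: both bound $\del > \mu(E) \ge \sum_{i\in I_0}\mu(P_i\cap E) > \sqrt{\del}\sum_{i\in I_0}\mu(P_i)$ and divide by $\sqrt{\del}$. There is nothing to add.
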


\begin{proof}
For $i \in I_0$, we have $\mu(P_i \cap E) > \sqrt{\del} \mu(P_i)$, whence
$$
\del > \sum_{i \in I_0} \mu(P_i \cap E) > \sqrt{\del} \sum_{i \in I_0} \mu(P_i),
$$
so that $\sqrt{\del} > \sum_{i \in I_0} \mu(P_i)$.
\end{proof}

 \br
 
 {\bf  $\bullet$ A simple probabilistic lemma} 

\br

\begin{lem}
Let $p$ be a number in $(0,1)$ and let $L$ be a positive constant.
Suppose that $w_j \geq 0$ and $\sum_{j =1}^n w_j =1$.
Let $X_j, \ j=1,\dots,n$ be random variables taking values in $\{0,1\}$ with $P(X_j =1) \geq p$.
Suppose further that for each $j$ there is a set $I_j$, \  $ |I_j|  \leq L$,
such that for $i \not\in I_j$, $X_i$ and $X_j$ are independent.
Set $w = \max_{1 \leq j \leq n} w_j$.
Then for $X = \sum_{j=1}^n w_j X_j$ we have
$$
P(X \geq p/2)  \geq 1 - \frac{4(L+1)}{p^2} w.
$$
\end{lem}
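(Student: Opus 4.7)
The plan is to apply Chebyshev's inequality to $X$, using the hypotheses to get a lower bound on the mean and an upper bound on the variance. Since each $X_j$ takes values in $\{0,1\}$ with $P(X_j=1) \geq p$, we have $\E[X_j] \geq p$, and linearity together with $\sum_j w_j = 1$ gives $\E[X] \geq p$. Hence the event $\{X < p/2\}$ is contained in the event $\{|X-\E[X]| > p/2\}$, and Chebyshev will yield
$$
P(X < p/2) \leq \frac{4\,\Var(X)}{p^{2}}.
$$
So the entire argument reduces to showing $\Var(X) \leq (L+1)w$.

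The variance expands as $\Var(X) = \sum_{i,j} w_i w_j \Cov(X_i,X_j)$. Here is where the dependency hypothesis enters: for each fixed $j$, the covariance $\Cov(X_i,X_j)$ vanishes whenever $i \notin I_j$ (by independence), and is nonzero only on the ``bad'' set $I_j \cup \{j\}$, which has cardinality at most $L+1$. For the remaining terms I would use the crude bound $|\Cov(X_i,X_j)| \leq 1$, which is immediate since both $\E[X_i X_j]$ and $\E[X_i]\E[X_j]$ lie in $[0,1]$ (one could save a factor using $|\Cov| \leq 1/4$, but this is not needed). Bounding each $w_i$ in the inner sum by the maximum weight $w$ then gives
$$
\Var(X) \leq \sum_{j} w_j \sum_{i \in I_j \cup\{j\}} w_i \leq \sum_j w_j \,(L+1)\,w = (L+1)\,w.
$$
Substituting into the Chebyshev bound produces exactly the inequality $P(X \geq p/2) \geq 1 - \frac{4(L+1)}{p^{2}}w$.

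There is no serious obstacle here; the only mildly delicate point is to correctly account for the diagonal terms $i=j$ together with the off-diagonal ``bad'' indices, which is why the factor is $L+1$ rather than $L$. The rest is a textbook second-moment calculation.
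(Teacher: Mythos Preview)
Your proposal is correct and follows essentially the same approach as the paper: bound the variance of $X$ by $(L+1)w$ using the limited-dependence hypothesis, then apply Chebyshev's inequality together with $\E[X]\ge p$. The paper's write-up centers the variables and uses the slightly sharper bound $\E[(X_j-\E X_j)^2]\le 1/4$ on the diagonal, but this refinement is not needed and your cruder covariance bound yields the stated inequality directly.
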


\begin{proof}
Let $\bar{x}_j = P(X-j =1)$ and define $Y_j = X_j - \bar{x}_j 
$.
Let $\bar{p} = \sum_{j=1}^n w_j \bar{x}_j \geq p$.
Then $\sum w_j Y_j = X - \bar{p}$, and
if $j \not \in I_j$ we have
$$
\E(Y_j Y_i) =0, \quad {\text{while}} \quad \E(X - p) =0.
$$
Now
$$
(X - \bar{p})^2 = \sum_{j=1}^n w_j^2 Y_j^2 + \sum_{j=1}^n \sum_{i \in I_j} Y_j Y_i.
$$
Since $\E(Y_j^2) \leq 1/4$ we get
$$
\E( (X - \bar{p})^2) \leq 1/4 \cdot w + w \cdot L  \leq w(L+1).
$$
As $\bar{p} \geq p$, our claim follows.
\end{proof}

\end{document}